\numberwithin{equation}{section}
\renewcommand{\epsilon}{\varepsilon}
\DeclareSymbolFont{SY}{U}{psy}{m}{n}
\DeclareMathSymbol{\emptyset}{\mathord}{SY}{'306}
\DeclareMathOperator{\esup}{ess\,sup}
\newcommand{\R}{\mathbb{R}}
\newcommand{\N}{\mathbb{N}}
\newcommand{\cH}{{\mathcal H}}
\newcommand{\cO}{{\mathcal O}}
\newcommand{\cU}{{\mathcal U}}
\newcommand{\cV}{{\mathcal V}}
\newcommand{\cW}{{\mathcal W}}
\newcommand{\supp}{\mathrm{supp}}
\newtheorem{theorem}{Theorem}[section]{\bf}{\it}
{\bf}{\it}
\newtheorem{proposition}[theorem]{Proposition}{\bf}{\it}
\newtheorem{corollary}[theorem]{Corollary}{\bf}{\it}
{\it}{\rm}
\newtheorem{lemma}[theorem]{Lemma}{\bf}{\it}
\newtheorem{remark}[theorem]{Remark}{\it}{\rm}
\newtheorem{definition}[theorem]{Definition}{\bf}{\it}
{\bf}{\it}
{\bf}{\it}
\newtheorem{assumption}{Assumption}
\title[Hammertein fixed point problem]{Spatially localized solutions of the Hammerstein equation with sigmoid type of nonlinearity}
\author[A.~Oleynik]{Anna Oleynik}
\address{A.~Oleynik, Department of Mathematical Sciences and Technology, Norwegian University of Life Sciences, Postboks $5003$ NMBU
1432 {\AA}s}
\email{anna.oleynik@nmbu.no}
\author[A.~Ponosov]{Arcady Ponosov}
\address{A.~Ponosov, Department of Mathematical Sciences and Technology, Norwegian University of Life Sciences, Postboks $5003$ NMBU
1432 {\AA}s}
\email{arkadi@nmbu.no}
\author[V.~Kostrykin]{Vadim Kostrykin}
\address{V.~Kostrykin, FB 08 - Institut f\"{u}r Mathematik,
Johannes Gutenberg-Universit\"{a}t Mainz,
Staudinger Weg 9,
D-55099 Mainz,
Germany}
\email{kostrykin@mathematik.uni-mainz.de}
\author[A.V.~Sobolev]{Alexander V. Sobolev}
\address{A.V.~Sobolev, Department of Mathematics,
University College London, Gower Street, London WC1E 6BT, UK}
\email{a.sobolev@ucl.ac.uk}
\subjclass[2000]{47H30, 47N60, 47G10, 45L05, 45J05, 92B20}
\keywords{nonlinear integral equations, sigmoid type nonlinearities, neural field model, FitzHugh-Nagumo model, bumps}
\begin{document}

\maketitle

\begin{abstract}
We study the existence of fixed points to a parameterized Hammertstain operator $\cH_\beta,$ $\beta\in (0,\infty],$ with sigmoid type of nonlinearity. The parameter $\beta<\infty$ indicates the steepness of the slope of a nonlinear smooth sigmoid function and the limit case $\beta=\infty$ corresponds to a discontinuous unit step function. We prove that spatially localized solutions to the fixed point problem for large $\beta$ exist and can be approximated by the fixed points of $\cH_\infty.$ These results are of a high importance in biological applications where one often approximates the smooth sigmoid by discontinuous unit step function. Moreover, in order to achieve even better approximation than a solution of the limit problem, we employ the iterative method that has several advantages compared to other existing methods. For example, this method can be used to construct non-isolated homoclinic orbit of a Hamiltionian system of equations. We illustrate the results and advantages of the numerical method for stationary versions of the FitzHugh-Nagumo reaction-diffusion equation and a neural field model.

\end{abstract}

\section{Introduction}

We study the existence of solutions to the fixed point problem
\begin{equation} \label{eq:H}
u=\cH_\beta u, \quad(\cH_\beta u)(x):=\int \limits_\R \omega(x-y)f_\beta(u(y))dy.
\end{equation}

Here $\cH_\beta$ is the parameterized Hammerstein operator with $\beta \in (0,\infty],$ $\omega(x)$ is symmetric, and $f_\beta(u): \R \to [0,1]$ is a smooth function of sigmoid shape that approaches (in some way which we specify later) the unit step function $f_\infty=\chi_{[h,\infty)}$ for some $h>0$ as $\beta \to \infty.$ Examples of this type of function are

\begin{equation} \label{eq:f:sigmoid}
f_\beta(u)=S(\beta(u-h)), \quad S(u):=\dfrac{1}{1+\exp(-|u|)},
\end{equation}
and
\begin{equation} \label{eq:f:hill}
f_\beta(u)=S(\beta(u-h)), \quad S(u,p):=\dfrac{u^p}{u^p+1}\chi_{[0,\infty)}(u), \, p>0,
\end{equation}
see Fig. \ref{fig:Sigmas}.

\begin{figure}[h]
\scalebox{0.5}{\includegraphics{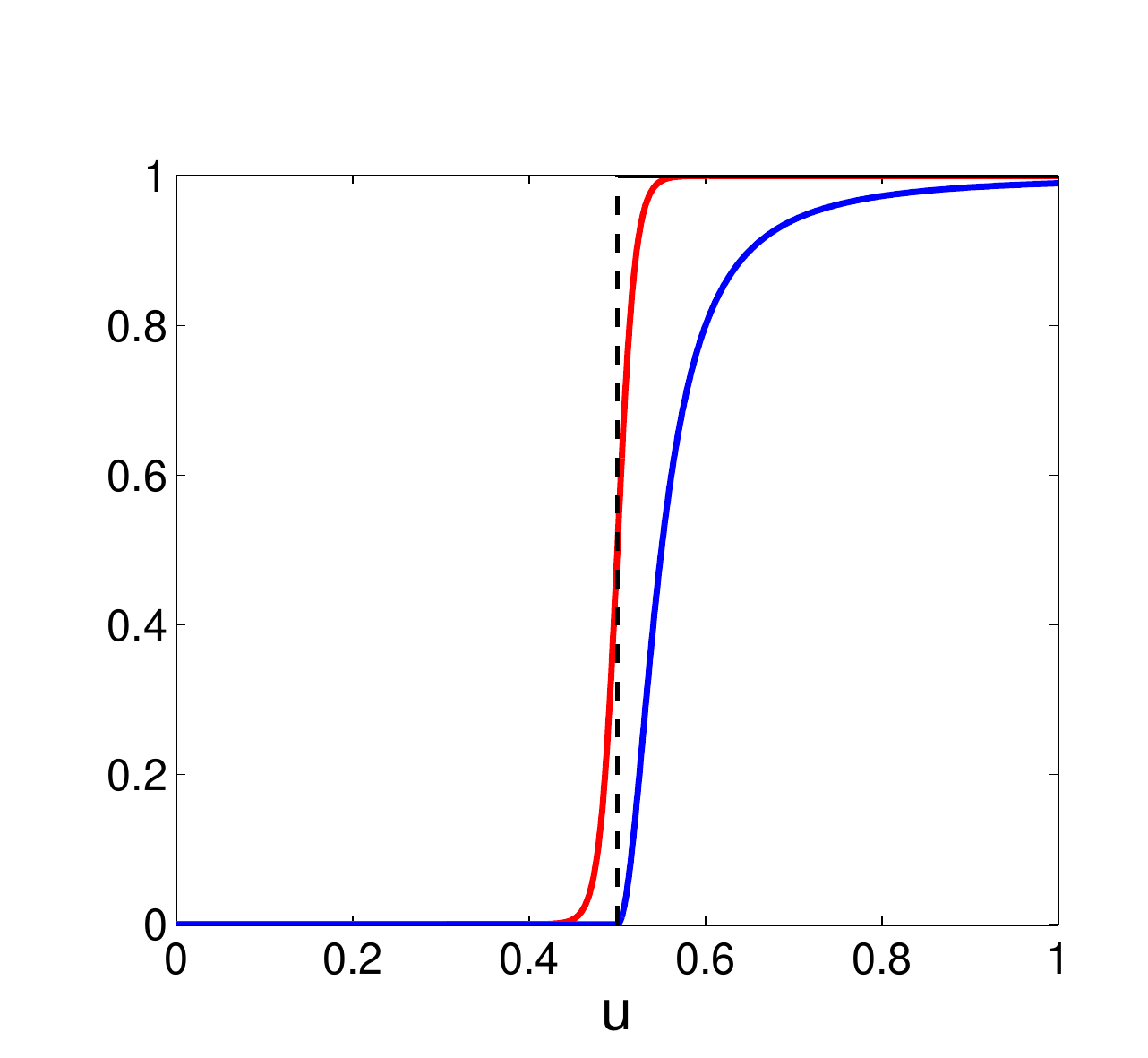}}
\caption{Functions $f_\beta$ given in \eqref{eq:f:sigmoid} with $\beta=100$ (red curve) and in \eqref{eq:f:hill} with $\beta=20$ (blue curve), and $f_\infty=\chi_{[h,\infty)}$ (black line), with $h=0.5.$ } \label{fig:Sigmas}
\end{figure}
This problem arises in several biological applications, e.g., in studying the existence of steady state solutions to a neural field model and the FitzHugh-Nagumo equation. We give examples of these models in Section \ref{sec:Examples}.

When the function $f_\beta$ is such that $f_\beta(u)=0$ for all $u<h,$ i.e., $\supp (f_\beta)\subset [h,\infty),$ all solutions to \eqref{eq:H} can be divided into two categories: (i) localized solutions and (ii) non-localized solutions (e.g. periodic, quasi-periodic). Here we study the solutions of the first class which we define in detail in Section \ref{sec:Bumps}.

For the limit case $\beta=\infty$, one often can construct localized solutions analytically, see e.g. \cite{Amari:1977} and Chapter $3$ in \cite{Coombes:NFM}. However, the case $\beta=\infty$ is only a simplification of a more realistic model where $f_\beta,$ $0<\beta<\infty$ is a steep yet smooth function. Analytical tools do not work in the latter case, and the existence of solutions for the case of $\beta<\infty$ and their continuous dependents on $\beta$ as $\beta \to 0,$ is often only conjectured from numerical simulations, see e.g. \cite{Coombes:2005,Avitabile:2014,McKean:1970}.

The main challenge of a proper justification of the transition between the cases $\beta<\infty$ and $\beta=\infty,$ is discontinuity of $f_\infty$ which leads to discontinuity of the corresponding integral operator in any standard functional space. To avoid this difficulty we suggest to exploit a spatial structure of localized solutions for $\beta=\infty$ and construct functional spaces such that
the operator $\cH_\infty$ is not only continuous but Fr\'{e}chet differentiable and the Implicit Function Theorem can be used. That is, we show that under the assumption that localized solution  of  \eqref{eq:H} for $\beta=\infty$ exists and satisfies some properties, solutions to \eqref{eq:H} for large $\beta<\infty$ exist, converge to a solution of the limit case as $\beta\to \infty$, and can be iteratively constructed.
We emphasize that this result is quite important as it provides a motivation for a common in applied sciences approximation of a smooth sigmoid function by the discontinuous unit step function.

 While the Implicit Function Theorem is well known to be a useful and powerful tool dealing with such problems, it has not been used due to the non-trivial choice of spaces for operator convergence. However there are similar results obtained in \cite{Oleynik:2013} and later extended by Burlakov et. al \cite{Burlakov:2015} for the case of a bounded spacial domain in $\R^n,$ $n\geq 1,$ using the topological degree theory and properties of the Hammerstain operators for a smaller class of solutions and more strict assumptions on the integral kernel $\omega.$

 The degree theory approach has some disadvantages compare to the method based on the Implicit Function Theorem proposed here. Firstly, it requires calculating the topological degree of $\cH_\infty$. Though it seems like the necessarily condition on the topological degree being non zero
 coincides with an assumption we introduce on the localized solution of \eqref{eq:H}, $\beta=\infty$, its calculation could be involved.
  Secondly, the degree theory never gives uniqueness for $\beta<\infty$ even if it is proven for $\beta=\infty.$
  Thirdly, as in infinite dimensional Banach spaces the degree is only defined for compact perturbations of the identity operator, see \cite{Zeidler:I}, the domain and the range space of $\cH_\beta$ must be the same. This restrict the choice of $\omega.$  Finally, the degree theory approach does not give a method for constructing solutions numerically.

The paper is organized as follows. In Section \ref{sec:Examples} we give examples of relevant applications, that is, the FitzHugh-Nagumo equation (Section \ref{sec:Examples:FN}) and the neural field model (Section \ref{sec:Examples:NFM}), that we use later to illustrate our results. We give a list of notations in Section \ref{sec:Notation}. In Section \ref{sec:Framework} we introduce assumptions, describe the problem and state the results. In particular, in Section \ref{sec:Properties} we describe some properties of the operator $\cH_\beta,$ in Section \ref{sec:Bumps} we give a definition of a bump and bump solution. The existence and assumptions on the solution to the problem \eqref{eq:H} for the limiting case $\beta=\infty$ is discussed in
Section \ref{sec:Limiting:problem}. We formulate our main results  as Theorem \ref{th:1} in Section \ref{sec:Main:results}. Section \ref{sec:Proof} is dedicated to the proof of Theorem \ref{th:1}. In Section \ref{sec:Conclusions} we apply our results to prove existence of stationary solutions to the FitzHugh-Nagumo equation (Section \ref{sec:Conclusions:FN}) and the neural field model (Section \ref{sec:Conclusions:NFM}), and numerically construct them. We also use these examples to discuss the advantages of the method in comparison to other existing methods. Section \ref{sec:Outlook} contains conclusions and outlook.

\section{Examples} \label{sec:Examples}
\subsection{FitzHugh-Nagumo equation and its caricature} \label{sec:Examples:FN}
The famous FitzHugh-Nagumo equation introduced in \cite{FitzHugh:1961,FitzHugh:1969,Nagumo:1962} describes qualitative properties of nerve conduction. The equation can be written as
\begin{equation} \label{eq:FitzHugh}
\begin{array}{l}
\dfrac{\partial u}{\partial t} =g(u)-v+D \dfrac{\partial^2 u}{\partial x^2}\\
\\
\dfrac{\partial v}{\partial t} = b u-\gamma v,
\end{array}
\end{equation}
where $g$ is commonly assumed to be the cubic function
\begin{equation} \label{eq:g:cubic}
g(u)=u(u-h)(1-u), \quad 0<h<1,
\end{equation}
and $b,$ $\gamma,$ and $D$ are positive parameters.

McKean  in \cite{McKean:1970} suggested replacing the cubic $g$ with a broken line of the same general shape, that is,
\begin{equation} \label{eq:g:step}
g(u)=-u+f_\infty(u), \quad f_\infty=\chi_{[h,\infty)}, \quad 0<h<1,
\end{equation}
see Fig. \ref{fig:g}.
The equation \eqref{eq:FitzHugh} with \eqref{eq:g:step} is commonly referred to as a caricature of the FitzHugh-Nagumo equation.
It is believed but not proven that the equation \eqref{eq:FitzHugh} with the cubic function \eqref{eq:g:cubic} and with \eqref{eq:g:step} have similar portraits in the large (and indeed they have similar phase portraits for the steady state solution equations for some parameter values). However, since discontinuous $g=-u+f_\infty(u)$ does not provide a good approximation of the cubic function \eqref{eq:g:cubic} but only resembles its shape, see Fig.\ref{fig:g}, it is doubtful that one can draw any rigourous conclusion about one model from analysing another. And indeed, the analysis of the equation \eqref{eq:FitzHugh} with the function \eqref{eq:g:cubic} and \eqref{eq:g:step} are rather disconnected in the literature.

\begin{figure}[h]
\scalebox{0.7}{\includegraphics{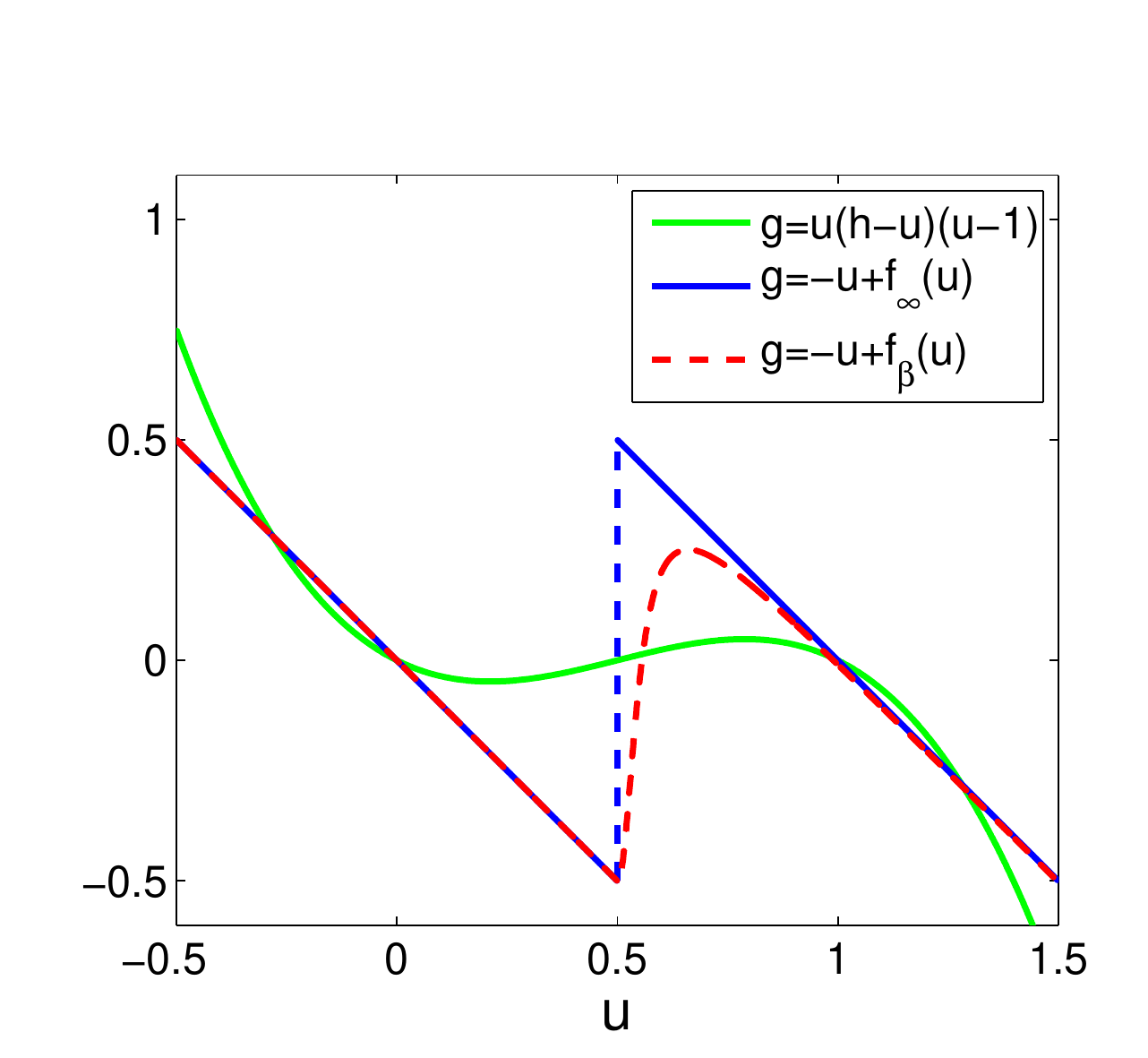}}
\caption{Functions $g=u(u-h)(1-u)$, $g=-u+f_\beta(u)$ with $f_\beta(u)$ given as in \eqref{eq:f:hill} with $\beta=20$ and $g=-u+f_\infty(u)$ given in \eqref{eq:g:step}, with $h=0.5.$ } \label{fig:g}
\end{figure}

However, a smooth sigmoidal shape function  $g=-u+ f_\beta(u)$ may provide a good approximation of $g=-u+f_\infty(u)$ for large $\beta,$ see Fig. \ref{fig:g}.  Thus, one would be able to draw a common conclusion about existence and behaviour of solutions obtained for either case. This leads us to study the reaction diffusion system

\begin{equation} \label{eq:FitzHugh:2}
\begin{array}{l}
\dfrac{\partial u}{\partial t} =f_\beta(u)-(u+v)+D \dfrac{\partial^2 u}{\partial x^2}\\
\\
\dfrac{\partial v}{\partial t} = b u-\gamma v.
\end{array}
\end{equation}
with $\beta \in (0,\infty].$

The steady states of \eqref{eq:FitzHugh:2} are solutions to
\begin{subequations}\label{eq:FN:steady}
\begin{align}
-u'' + k^2 u=f_\beta(u)\label{eq:FN:steady:a}
\\
v=\dfrac{b}{\gamma}u,\label{eq:FN:steady:b}
\end{align}
\end{subequations}
where  $k=\sqrt{1+{b}/{\gamma}}.$

Any bounded solution of the ordinary differential equation  in \eqref{eq:FN:steady:a} also satisfies the integral equation \eqref{eq:H}, i.e.,
\begin{equation} \label{eq:FN:steady:int}
u(x)=\int\limits_\R \omega(x-y)f_\beta(u(y)),
\end{equation}
with
\begin{equation}\label{eq:omega:exp}
\omega(x)=\frac{1}{2k}e^{-k|x|},
\end{equation}
see Fig. \ref{fig:omegas}.

Here \eqref{eq:omega:exp} is the Green function to the linear part of the equation in \eqref{eq:FN:steady:a} which can be shown in a similar way as in \cite{Krisner:2004}.

\subsection{Neural field model}\label{sec:Examples:NFM}
The behavior of a single layer of neurons can be modeled by a nonlinear integro-differential equation of the Hammerstein type,
\begin{equation}\label{eq:1}
 \partial_t u(x,t) = -u(x,t) + \int_{\R} \omega(x-y) f(u(y,t)) dy.
\end{equation}
Here $u(x,t)$ and $f(u(x,t))$ represent the averaged local activity and the firing rate of neurons at the position $x\in \R$ and time $t>0$, respectively, and $\omega(x-y)$ describes a coupling between neurons at positions $x$ and $y$.

The model above is often referred to as the Amari model and is a version of neural field models that constitute a special class of models where the neural tissue is treated as a continuous structure. The model  \eqref{eq:1} has been studied in numerous mathematical papers, for a review see, e.g., \cite{Coombes:2005,Ermentrout:1998} and \cite{Coombes:NFM}. In particular, the global existence and uniqueness of solutions to the initial value problem for \eqref{eq:1} under rather mild assumptions on $f$ and $\omega$ has been proven in \cite{Potthast:Graben:2010}.

In 1977, Amari studied pattern formation in \eqref{eq:1} for a model where $f$ is the unit step function and
$\omega$ is assumed to be of the lateral-inhibitory type, i.e., continuous, integrable and even, with $\omega(0)>0$ and having exactly one positive zero. In particular, he showed the existence of stable and unstable time independent spatially localized solutions to \eqref{eq:1} which he referred to as bumps. For more general $f$ and $\omega$ the existence of  solutions of this kind has been shown by Kishimoto and Amari in \cite{Kishimoto:Amari:1979} and later generalized in \cite{Oleynik:2015} and \cite{Kostrykin:Oleynik:2013}.

In what follows we use the Amari terminology, i.e., spatially localized solutions will be called  bumps.

Since the work by Amari the lateral-inhibitory type of connectivity function is the common choice when one studies pattern formation in neural field models. Examples of this type of connectivity are

\begin{equation} \label{eq:omega:wizard}
\omega(x)=(1-|x|) e^{-k|x|}, \quad k>0
\end{equation}
and
\begin{equation} \label{eq:omega:2Gaussians}
\omega(x)=K e^{-(kx)^2}-M e^{-(mx)^2}, \quad K>M, \, k>m,
\end{equation}
see Fig. \ref{fig:omegas}.

\begin{figure}[h]
\scalebox{0.7}{\includegraphics{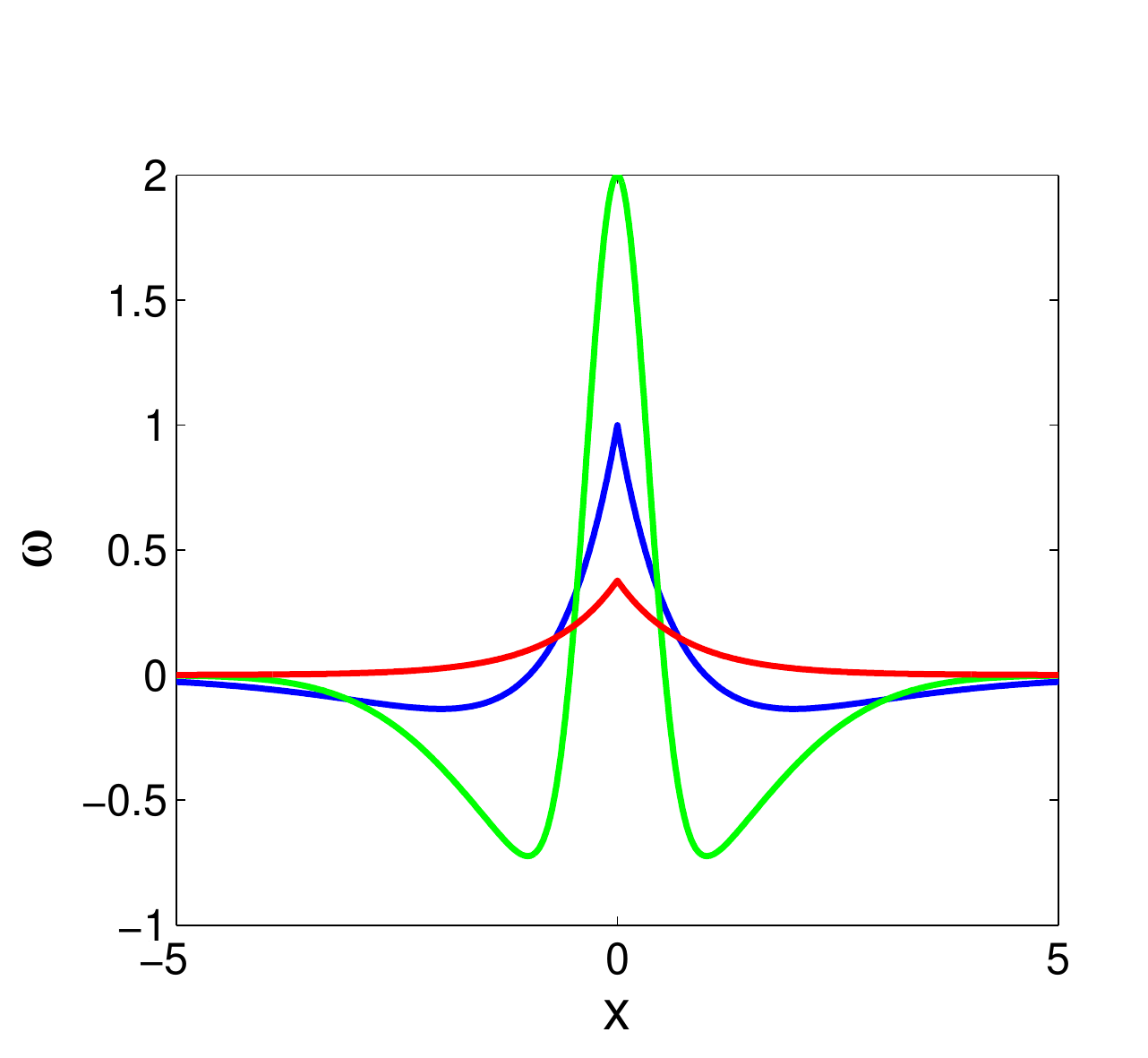}}
\caption{Functions $\omega$ given by \eqref{eq:omega:exp} with $k=1.3229$ (red curve), $\omega$ given by \eqref{eq:omega:wizard} with $k=1$ (blue curve) and $\omega$ given by \eqref{eq:omega:2Gaussians} with $K=3, k=2, M=1, m=0.5$ (green curve).  } \label{fig:omegas}
\end{figure}

When the Fourier transform of the connectivity function is real and rational, e.g., $\omega$ in \eqref{eq:omega:wizard}, the time independent version of \eqref{eq:1} can be converted to a higher order nonlinear differential equation which is in turn can be represented as a Hamiltonian system. Bumps correspond to homoclinic orbits of this system, \cite{Laing:2002,Laing:2003:a}. Despite there are some methods for studying existence of homoclinic orbits for higher order Hamiltonian systems, see e.g. \cite{Champneys:1998,Champneys:1999}, due to the specifics of the considered model these methods are not straightforwardly applicable here and the most results are only numerical, for details see \cite{Laing:2002}. For the case when $\omega$ does not admit a real rational Fourier transform, as for example in \eqref{eq:omega:2Gaussians}, the mentioned differential equations methods are not available and the other approaches must be used.

For $f=f_\beta,$ where $f_\beta \to f_\infty=\chi_{[h,\infty]}$ as $\beta\to \infty,$ and $\supp (f_\beta)\subset [h,\infty)$, the existence of bumps of a particular kind (so called $1$-bumps) and their continuous dependence on the parameter $\beta$ has been shown using the topological degree theory and collectively compactness and continuity of the Hammerstain operators in \cite{Oleynik:2013}. Later these results were extended to the case of $n$-dimensional, $n\in \N$ bounded spacial domain in \cite{Burlakov:2015}.

\section{Notation} \label{sec:Notation}
For readers convenience we give a list of functional spaces and specify other notations we use. Let $\Omega$ be a bounded or unbounded subset in $\R.$
\begin{itemize}
\item $L^p(\Omega),$ $1\leq p\leq \infty,$ is the space of all functions such that the $p$th power of the absolute value is Lebesgue integrable and the norm is given as
\begin{equation*}\label{eq:norm:Lp}
\|v\|_{L^p(\Omega)}:=\left(\int\limits_{\Omega}|v(x)|^p dx\right)^{1/p}, \quad 1\leq p<\infty,
\end{equation*}
and
\begin{equation*}\label{eq:norm:Linfty}
\|v\|_{L^\infty(\Omega)}:=\esup \limits_{x\in \Omega}|v(x)|.
\end{equation*}
When $\Omega=\R$ we use the common notation for the norm $\|v\|_\infty:=\|v\|_{L^\infty(\R)}.$
\\
\item $B(\Omega)$ is the linear space of all bounded functions.\\
\item $C(\Omega)$ is the linear space of all continuous (but not necessarily bounded) functions on $\Omega$.\\
\item $BC(\Omega)$ is the Banach space of all continuous bounded functions on $\Omega$ with the norm
\begin{equation*}\label{eq:norm:BC}
\|v\|_{C(\Omega)}:=\sup\limits_{x\in \Omega}|v(x)|.
\end{equation*}
When $\Omega=\R$ we often use $\|\cdot\|_\infty$ notation for the norm above.
\\
\item $C^{n}(\Omega)$, $n\in \N,$ is the linear space of all continuous (but not necessarily bounded) functions with continuous $k$th derivatives, $k=1,...,n,$ on $\Omega$.\\
\item $BC^{n}(\Omega)$, $n\in \N,$ is the Banach space of all continuous bounded functions with continuous and bounded $k$th derivatives, for $k=1,...,n,$ on $\Omega$ equipped with the norm
    \begin{equation*}\label{eq:norm:BCn}
\|v\|_{C^n(\Omega)}:=\sum \limits_{k=0}^{n}\sup\limits_{x \in \Omega}|v^{(k)}(x)|.
\end{equation*}
\\
\item $C^{0,1}(\Omega)$ is the space of all Lipschitz continuous functions on $\Omega$.\\
\item $C^{0,\alpha}(\Omega)$ is the space of all H\"{o}lder continuous functions on $\Omega$ with the exponent $0<\alpha<1$.\\
\end{itemize}
When $\Omega$ is a compact set then we prefer the notation $C^n(\Omega)$ over $BC^n(\Omega).$ Moreover, in this case we treat $C^{0,\alpha}(\Omega),$ $0<\alpha\leq 1$ as the Banach spaces equipped with the norm
\begin{equation*}\label{eq:norm}
\|v\|_{C^{0,\alpha}(\Omega)}:=\sup\limits_{x\in \Omega}|v(x)|+\sup \limits_{\begin{array}{c}
                                                            x,y\in \Omega\\
                                                            x\not=y \end{array}}
                                                            \dfrac{|v(x)-v(y)|}{|x-y|^\alpha}
\end{equation*}
When there is no confusion what $\Omega$ is we use the notation $\| \cdot\|_\alpha$ instead of $\|\cdot\|_{C^{0,\alpha}(\Omega)}.$

We denote $C_{even}(\Omega)$ the space of all even continuous function on $\Omega.$ The same notation applies for the other spaces, e.g., $L^p_{even}(\Omega),$ $1\leq p\leq \infty$ and etc.

 If $\Omega=\R$ we omit the set specification in the norm notation, i.e., we write $\|\cdot\|_{C^{n}}$ instead of $\|\cdot\|_{C^n(\R)}.$

We use boldface to denote vectors, e.g., ${\bf a},$ {\bf G}, and the textsf font for matrices, e.g., \textsf{S}, \textsf{I}.

\section{Framework and main results} \label{sec:Framework}

We study existence of solutions to the fixed point problem \eqref{eq:H}
under the following assumptions on $f_\beta$ and $\omega.$

\begin{assumption} \label{ass:f}
Let $h>0$ be fixed, and let $\{f_\beta\},$ $\beta\in(0,\infty]$ define a family of functions with the following properties:
\begin{itemize}
\item[(1)] $f_\beta:\R \to [0,1]$ is non-decreasing for any $\beta \in (0,\infty],$\\
\item[(2)]$\supp (f_\beta) \subset [h,\infty)$ for any $\beta \in (0,\infty],$\\
\item[(3)] $f_\infty=\chi_{(h,\infty)}$,i.e., the characteristic function of the half-line set $(h,\infty)$,\\
\item[(4)] $f_\beta(t)$ is continuous in $\beta \in (0,\infty)$ uniformly in $t$ on any bounded interval, \\
\item[(5)] $f_\beta \in C^{0,1}(\R)$ for $\beta<\infty,$ and for any $\xi>0$
$$
C_\beta(\xi):=\esup \limits_{t\in(h+\xi,\infty)}|f'_\beta(t)|\to 0 \mbox{ as } \beta \to \infty.
$$
\end{itemize}
\end{assumption}

The function in \eqref{eq:f:hill} with $p>1$ serves as an example of such a function.

We also notice that from Assumption \ref{ass:f}(5) $f_\beta$ has the following convergence property

\begin{equation}\label{eq:f:uniformly}
\sup \limits_{t\in \R\backslash(h-\xi,h+\xi)}|f_\beta(t) - f_\infty(t)| \to 0 \mbox{ as } \beta \to \infty.
\end{equation}

\begin{assumption} \label{ass:omega}
The function $\omega$ in \eqref{eq:H} satisfies the following conditions:
\begin{itemize}
\item[(1)] $\omega$ is symmetric, i.e., $\omega(-x)=\omega(x)$,\\
\item[(2)]$\omega$ is a Lipschitz function, i.e., $\omega \in C^{0,1}(\R),$ \\
\item[(3)] $\omega \in L^1(\R),$ and \\
\item[(4)] $\omega$ is bounded, i.e., $\omega \in L^\infty(\R).$
\end{itemize}
\end{assumption}

The functions in \eqref{eq:omega:exp}, \eqref{eq:omega:wizard}, and in \eqref{eq:omega:2Gaussians} clearly satisfy the assumption above.

Since the function $f_\beta$ is such that $f_\beta(u)=0$ for all $u<h,$ see Assumption \ref{ass:f}(2), all the solutions to \eqref{eq:H} can be divided into two categories: (i) localized solutions (so called bumps, see e.g. \cite{Amari:1977,Oleynik:2013}) and (ii) non-localized solutions (e.g., periodic, quasi-periodic).

Here we study the existence of solutions of the first type. We introduce only a few properties of the operator $\cH_\beta$ that is needed here. For more general description of $\cH_\beta$ we refer to \cite{Oleynik:2013} and \cite{Burlakov:2015}.

\subsection{Properties of $\cH_\beta$}\label{sec:Properties}

\begin{lemma}\label{lemma:1}
 Let $f_\beta$ and $\omega$ satisfy Assumption \ref{ass:f} and Assumption \ref{ass:omega}. Then for any real valued measurable function $u$ we have $\cH_\beta u \in BC(\R)$ for any $\beta \in (0,\infty]$. Moreover, if
$u(x)\leq h$ for all $x\in \R$ except a subset $X\subset\R$ of a finite measure,
then $\cH_\beta u \in L^1(\R) \cap BC^1(\R)$ and $(\cH_\beta u)(x) \to 0$ as $|x|\to \infty.$
\end{lemma}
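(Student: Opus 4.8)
The plan is to verify the three conclusions in order, using only the hypotheses on $f_\beta$ and $\omega$ together with elementary estimates. First I would establish that $\cH_\beta u \in B(\R)$ for any measurable $u$: since $0 \le f_\beta \le 1$ by Assumption \ref{ass:f}(1) and $\omega \in L^1(\R)$ by Assumption \ref{ass:omega}(3), for every $x$ we have the pointwise bound $|(\cH_\beta u)(x)| \le \int_\R |\omega(x-y)| f_\beta(u(y))\, dy \le \|\omega\|_{L^1(\R)}$, so $\|\cH_\beta u\|_\infty \le \|\omega\|_{L^1}$. Continuity of $\cH_\beta u$ then follows from continuity of translation in $L^1$: $|(\cH_\beta u)(x) - (\cH_\beta u)(x')| \le \int_\R |\omega(x-y) - \omega(x'-y)|\, dy = \|\tau_{x}\omega - \tau_{x'}\omega\|_{L^1} \to 0$ as $x' \to x$. (Alternatively one can invoke dominated convergence using the bound $f_\beta \le 1$ and continuity of $\omega$.) This gives $\cH_\beta u \in BC(\R)$ for all $\beta \in (0,\infty]$, the first claim.

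Next, assume $u(x) \le h$ for all $x$ outside a set $X$ of finite measure. By Assumption \ref{ass:f}(2), $f_\beta$ is supported in $[h,\infty)$; more precisely $f_\beta(t) = 0$ for $t < h$, and at $t = h$ either $f_\beta(h) = 0$ (for $\beta < \infty$, by Lipschitz continuity and $f_\beta = 0$ on $(-\infty,h)$) or the single point contributes nothing to the integral. Hence $f_\beta(u(y)) = 0$ for a.e.\ $y \notin X$, so $(\cH_\beta u)(x) = \int_X \omega(x-y) f_\beta(u(y))\, dy$. For the $L^1$ bound I would use Tonelli's theorem: $\|\cH_\beta u\|_{L^1(\R)} \le \int_\R \int_X |\omega(x-y)|\, dy\, dx = \int_X \|\omega\|_{L^1(\R)}\, dy = |X|\,\|\omega\|_{L^1} < \infty$. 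The decay $(\cH_\beta u)(x) \to 0$ as $|x| \to \infty$ follows because $|(\cH_\beta u)(x)| \le \int_X |\omega(x-y)|\, dy$, and this tends to $0$: for large $|x|$ the set $x - X$ escapes to infinity, and since $\omega \in L^1$, $\int_{x-X} |\omega| \to 0$ (formally, split $X$ into a large compact part $X \cap [-R,R]$, on which $|(\cH_\beta u)(x)| \le \esup_{|t| \ge |x|-R}|\omega(t)| \cdot 2R$ which vanishes as $\omega \in L^1 \cap L^\infty$ implies $\omega(t) \to 0$ along a subsequence — cleaner is to use absolute continuity of the integral: $\int_{x - X}|\omega| \le \int_{\{|s| \ge |x| - R\}} |\omega| + \int_{X \setminus [-R,R]} \|\omega\|_\infty$, choosing $R$ then $|x|$ large).

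Finally, for $\beta < \infty$ I would show $\cH_\beta u \in BC^1(\R)$ by differentiating under the integral sign. Writing $(\cH_\beta u)(x) = \int_X \omega(x-y) f_\beta(u(y))\, dy$ and using that $\omega$ is Lipschitz (Assumption \ref{ass:omega}(2)), hence differentiable a.e.\ with $\omega' \in L^\infty$, the difference quotients $\frac{\omega(x+s-y) - \omega(x-y)}{s}$ are bounded by the Lipschitz constant of $\omega$ uniformly, and $f_\beta(u(y)) \le 1$; since $X$ has finite measure, dominated convergence applies and gives $(\cH_\beta u)'(x) = \int_X \omega'(x-y) f_\beta(u(y))\, dy$, with $|(\cH_\beta u)'(x)| \le |X| \cdot \|\omega'\|_{L^\infty}$, so $\cH_\beta u \in BC^1(\R)$. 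The main obstacle I anticipate is the differentiation-under-the-integral step: $\omega$ is only Lipschitz, not $C^1$, so one cannot naively write $\partial_x \omega(x-y)$ — one must argue via difference quotients and the a.e.\ differentiability of Lipschitz functions, being careful that the exceptional null set for $\omega'$ does not interfere after the change of variables $y \mapsto x - y$ (it does not, since for each fixed $x$ the bad set in $y$ is still null). A secondary point requiring mild care is the behavior at $t = h$ in the support argument, which is why the hypothesis is stated as $u(x) \le h$ rather than $u(x) < h$; this is harmless since $\{u = h\} \cap (\R \setminus X)$ contributes measure zero to none of the integrals given $f_\beta(h) = 0$ for $\beta < \infty$ and $f_\infty = \chi_{(h,\infty)}$ ignores the point $h$.
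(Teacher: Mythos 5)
Your argument follows essentially the same route as the paper: the pointwise bound $|(\cH_\beta u)(x)|\le\int_X|\omega(x-y)|\,dy$ with $X=\{u\ge h\}$, continuity of translations in $L^1$ for continuity of $\cH_\beta u$, Tonelli for the $L^1$ bound, and differentiation under the integral using $\omega'\in L^\infty$; you are in fact more careful than the paper about justifying the difference-quotient step for merely Lipschitz $\omega$ and about the decay at infinity. The one step you omit is the continuity of the derivative, which the definition of $BC^1(\R)$ requires: after obtaining $(\cH_\beta u)'(x)=\int_X\omega'(x-y)f_\beta(u(y))\,dy$ you should add the estimate $|(\cH_\beta u)'(x)-(\cH_\beta u)'(x_0)|\le\int_X|\omega'(x-y)-\omega'(x_0-y)|\,dy\to 0$, which follows from continuity of translations in $L^1$ applied to $\omega'\in L^1(X)$ exactly as in your argument for $\cH_\beta u$ itself (and as the paper does).
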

\begin{proof}
For a general $u$ we define a set $X:=\{ x: \, u(x)\geq h\} \subseteq \R.$
We have the following estimate
$$
|(\cH_\beta u)(x)|=\left|\int_X \omega(x-y)f_\beta(u(y))dy\right| \leq
\int_X |\omega(x-y)|dy.
$$
Assumption \ref{ass:omega}(3) immediately yields $\|\cH_\beta u\|_\infty \leq \|\omega\|_{L^1}< \infty.$
To show continuity of $\cH_\beta$ let $v=\cH_\beta u$ and $x\to x_0.$  Then we obtain
$$
|v(x)-v(x_0)| \leq \int\limits_X |\omega(x-y)-\omega(x_0-y)|dy \to 0 \mbox{ as } x\to x_0,
$$
which follows from the continuity of translations in $L^1(\R),$ see, e.g., Proposition 2.5 in \cite{Stein:2005}.

Assume now that $X$ has a finite measure $\mu(X)<\infty.$  From Assumption \ref{ass:omega}(2) and Assumption \ref{ass:omega}(3) it follows that $\|\cH_\beta u\|_{L^1}\leq \|\omega\|_{L^1}\, \mu(X)<\infty.$
From Assumption \ref{ass:omega} (4) the derivative of $v=\cH_\beta u$ with respect to $x$ exists and is uniformly bounded, that is,
$$\|v'\|_\infty \leq \int_X |\omega'(x-y) f_\beta(u(y))|dy \leq \|\omega'\|_\infty \,\mu(X).$$

Next, we let $x\to x_0$ and obtain the estimate
$$|v'(x)-v'(x_0)|\leq \int \limits_X |\omega'(x-y)-\omega'(x_0-y)|dy.$$
 Assumption \ref{ass:omega}(2) implies that $\omega'\in L^\infty(\R)$ and thus $\omega' \in L^1(X).$ From the continuity of translations in $L^1(X)$ we deduce that  $|v'(x)-v'(x_0)| \to 0$ as $x\to x_0.$

 Hence we conclude that $\cH_\beta u \in BC^1(\R)$ and the
 the following estimate is valid
$$\|\cH_\beta u\|_{C^1}\leq (\|\omega\|_\infty+ \|\omega'\|_\infty) \mu(X)<\infty.$$

The property $v(x)\to 0$ as $|x|\to \infty$ follows from Assumption \ref{ass:omega}(2) and Assumption \ref{ass:omega}(3).
\end{proof}

From the lemma above any solution to \eqref{eq:H} is continuous and bounded.

\begin{lemma}\label{lemma:2}
Let $f_\beta$ and $\omega$ satisfy Assumption \ref{ass:f} and Assumption \ref{ass:omega}, respectively, and
let the operator $\cH_\beta:BC(\R) \to BC(\R)$ be defined as in \eqref{eq:H}.
Then the following statements are true.
(i) Any solution of \eqref{eq:H} is translation invariant, i.e., if $u(x)$ is a solution so is $u(x-c)$ for any $c\in \R.$ (ii) The operator $\cH_\beta$ preserves the symmetry, i.e., for any $u(x)=u(-x)$ we have $(\cH_\beta u)(-x)=(\cH_\beta u)(x).$
(iii) If for a fixed point $u(x)$ the corresponding $\supp(f_\beta(u(\cdot)))$ is a symmetric set, then $u(x)$ is even.
\end{lemma}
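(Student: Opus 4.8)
The plan is to derive all three statements from elementary changes of variables, using only the convolution structure of $\cH_\beta$, the symmetry $\omega(-t)=\omega(t)$ (Assumption~\ref{ass:omega}(1)), the translation/reflection invariance of Lebesgue measure, and -- for part (iii) -- the continuity of fixed points provided by Lemma~\ref{lemma:1}. For (i), given a fixed point $u=\cH_\beta u$ and $c\in\R$, I would set $u_c(x):=u(x-c)$ and substitute $z=y-c$ in $(\cH_\beta u_c)(x)=\int_\R\omega(x-y)f_\beta(u(y-c))\,dy$, which becomes $\int_\R\omega((x-c)-z)f_\beta(u(z))\,dz=(\cH_\beta u)(x-c)=u(x-c)=u_c(x)$; hence $u_c$ is again a fixed point. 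For (ii), given $u(-x)=u(x)$, I would substitute $y\mapsto -y$ in $(\cH_\beta u)(-x)=\int_\R\omega(-x-y)f_\beta(u(y))\,dy$ and then use $u(-y)=u(y)$ together with $\omega(-t)=\omega(t)$ to recognize the integral as $\int_\R\omega(x-y)f_\beta(u(y))\,dy=(\cH_\beta u)(x)$; equivalently, $\cH_\beta$ commutes with the reflection $v(\cdot)\mapsto v(-\,\cdot)$, and (ii) is the statement that reflection-invariant inputs produce reflection-invariant outputs.

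For (iii), write $g:=f_\beta\circ u$, so the fixed point equation reads $u(x)=\int_\R\omega(x-y)g(y)\,dy$ with $\supp g=X$ and $X=-X$ by hypothesis. The reflection substitution of (ii) gives $u(-x)=\int_\R\omega(x-y)g(-y)\,dy$, so $\tilde u:=u(-\,\cdot)$ satisfies $\tilde u=\cH_\beta\tilde u$ with $\supp(f_\beta\circ\tilde u)=-X=X$; it then remains to identify $\tilde u$ with $u$. When $\beta=\infty$ this needs no uniqueness argument: $g=f_\infty\circ u$ coincides, up to a null set, with the indicator $\chi_X$ of the symmetric set $X$, so $\chi_X(-y)=\chi_X(y)$ and therefore $u(-x)=\int_\R\omega(x-y)\chi_X(-y)\,dy=\int_\R\omega(x-y)\chi_X(y)\,dy=u(x)$.

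For finite $\beta$, however, $g$ depends on the unknown values of $u$, and the implication ``$\tilde u$ solves the same equation $\Rightarrow\tilde u=u$'' is the delicate point. I would reduce it to the restriction to $X$, where $u(x)=\int_X\omega(x-y)f_\beta(u(y))\,dy$: if this restricted fixed-point problem has a unique solution then $u|_X=\tilde u|_X$, hence $g=g(-\,\cdot)$ and so $u=\cH_\beta u=\cH_\beta\tilde u=\tilde u$ everywhere. \emph{The uniqueness of this restricted fixed point is the main obstacle}, since the associated integral operator on $C(X)$ need not be a contraction -- the Lipschitz constant of $f_\beta$ near the threshold $h$ is large, while Assumption~\ref{ass:f}(5) controls $f_\beta'$ only away from $h$. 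One way around this within the present framework is to argue locally near an even bump of the limiting problem, where the Implicit Function Theorem used later in the paper supplies the missing local uniqueness.
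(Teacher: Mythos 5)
Your computations for (i) and (ii) are the standard change-of-variables arguments and are correct; the paper gives no proof of Lemma~\ref{lemma:2} beyond declaring it straightforward, and for these two parts that is evidently the intended argument, so there is nothing to compare.

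Part (iii) is where the substance lies, and your diagnosis is accurate. For $\beta=\infty$ the function $f_\infty(u(\cdot))=\chi_{\{u>h\}}$ is completely determined by its support, so symmetry of the support makes it an even function and $u=\omega*f_\infty(u(\cdot))$ is even as a convolution of even functions; this is also the only situation in which the paper actually invokes (iii) (Section~\ref{sec:Limiting:problem}, to conclude that the $u_\infty$ built from \eqref{eq:u:inf:new} is even). For $\beta<\infty$ you are right that symmetry of $\supp(f_\beta(u(\cdot)))$ does not determine the values of $f_\beta(u(\cdot))$, so the reflection argument only yields that $\tilde u:=u(-\,\cdot)$ is a second fixed point with the same support set, and identifying $\tilde u$ with $u$ requires a uniqueness input that Assumptions~\ref{ass:f} and~\ref{ass:omega} do not supply (the associated operator is not a contraction, since $f_\beta'$ is only controlled away from the threshold $h$). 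Your proposed repair via the Implicit Function Theorem would give (iii) only for fixed points in a small $C^1$-neighbourhood of a symmetric limiting bump, not for an arbitrary fixed point with symmetric support, so it does not close the lemma in its stated generality. In short: your proof is complete in the case the paper needs ($\beta=\infty$), and the remaining gap for finite $\beta$ reflects an imprecision in the lemma as stated rather than a repairable omission in your argument; the clean fix is to restrict (iii) to $\beta=\infty$ or to add an explicit local-uniqueness hypothesis.
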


The proof is straightforward.

\begin{remark}
For Lemma \ref{lemma:1} and Lemma \ref{lemma:2} we did not use all the conditions of Assumption \ref{ass:f}. It suffices to assume that $u\mapsto f_\beta(u) \in [0,1]$ is measurable and  Assumption \ref{ass:f} (2)-(3) are satisfied.
\end{remark}

As mentioned before we intend to investigate the existence of localized solutions to \eqref{eq:H}. In the next section we describe the class of functions we are interested in.

\subsection{Bumps and regular bumps}\label{sec:Bumps}
\begin{definition}
Let $h\in \R$ and $\{b_i\}_{i=1}^{2N} \subset \R$ be an increasing sequence of $2N$ points.
The function $u \in C(\R)$ is called a $(h;\, {b_1,\,b_2,...,\,b_{2N}})$-bump if the following conditions are satisfied:\\
\begin{itemize}
\item[(i)] $\{b_i\}_{i=1}^{2N}$ are the only roots to $u(x)=h,$ \\
\item[(ii)]  there exists $\gamma>0$ and $A>0$ such that $u(x)<h-\gamma$ for all $|x|>A.$\\
\end{itemize}

We call the $(h;\, {b_1,\,b_2,\ldots,\,b_{2N}})$ - bump regular if in addition $u\in C^1(\R)$ and $u'(b_i) \not=0$ for all $i=1,\ldots,2N.$

When $h$ is assumed to be given and there is no need to specify the roots $\{b_i\}_{i=1}^{2N}$, we often refer to $u$ as a (regular) N-bump or, simply a (regular) bump.
\end{definition}
We illustrate the definition with Fig. \ref{fig:Bumps}.

\begin{figure}[h]
\scalebox{0.7}{\includegraphics{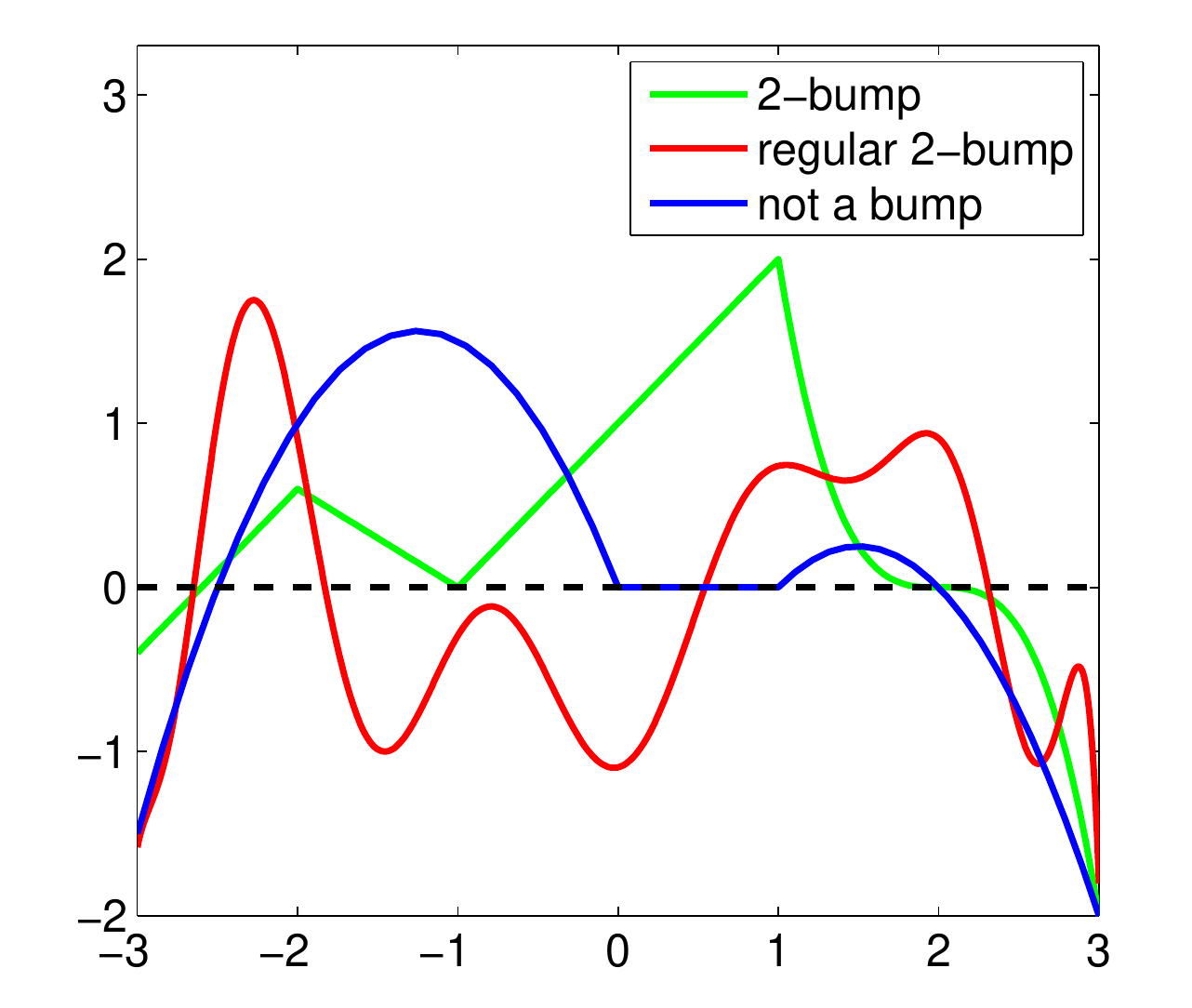}}
\caption{Examples of a bump, regular bump and not a bump, when $h=0$ and assuming that the functions are smooth enough and do not increase outside the interval $[-3,3].$} \label{fig:Bumps}
\end{figure}

 Regular bumps are stable under small perturbations in $C^1(\R).$ Indeed, let $u$ be a regular $(h; \,b_1,\,b_2,\ldots,\,b_{2N})$ - bump, and for some $l<b_1$ and $L>b_{2N}$ define the set
\begin{equation} \label{eq:K}
K_\epsilon(u;l,L):=\{v\in C^1(\R): \, \sup \limits_{x\in \R} |u(x)-v(x)|+\max\limits_{x\in [l,L]} |u'(x)-v'(x)|<\epsilon\}.
\end{equation}

We formulate the following lemma.
\begin{lemma} \label{lemma:3}
Let $u$ and $K_\epsilon(u;l,L)$ be given as above. Then there exists $\epsilon \in \left(0, 1/2 \min_{i} |u'(b_i)| \right)$  such that for any $v\in K_\epsilon(u;l,L),$ $v$ is a regular
$(h;\, {b_1(\epsilon),\,b_2(\epsilon),\ldots,\,b_{2N}(\epsilon)})$ - bump where $b_i(\epsilon) \to b_i$ as $\epsilon \to 0.$
\end{lemma}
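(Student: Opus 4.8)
The plan is to show that the perturbed function $v$ crosses the level $h$ transversally near each $b_i$ and nowhere else, using the smallness of the $C^0$-norm globally and of the $C^1$-norm on $[l,L]$.

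\medskip

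\textbf{Step 1: Transversality near each $b_i$.} Fix $i$ and set $m:=\min_j |u'(b_j)|>0$. Pick $\epsilon < m/2$. Since $u'$ is continuous and $|u'(b_i)|\ge m$, there is a closed interval $I_i=[b_i-\delta,b_i+\delta]\subset (l,L)$ (with the $\delta$'s small enough that the $I_i$ are pairwise disjoint) on which $|u'(x)|\ge 3m/4$ and $u'$ has constant sign. For $v\in K_\epsilon(u;l,L)$ we have $\sup_{[l,L]}|u'-v'|<\epsilon<m/4$, hence $|v'(x)|\ge m/2>0$ with the same sign as $u'$ throughout $I_i$; so $v$ is strictly monotone on $I_i$. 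Moreover $u(b_i-\delta)$ and $u(b_i+\delta)$ lie on opposite sides of $h$ and are separated from $h$ by a fixed amount $\eta_i>0$ (again by monotonicity of $u$ on $I_i$); choosing $\epsilon<\min_i\eta_i$ as well forces $v(b_i-\delta)$ and $v(b_i+\delta)$ to stay on the same respective sides of $h$. By the intermediate value theorem and strict monotonicity, $v$ has exactly one root of $v(x)=h$ in $I_i$, call it $b_i(\epsilon)$, and $v'(b_i(\epsilon))\ne 0$.

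\medskip

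\textbf{Step 2: No other roots.} Outside $\bigcup_i I_i$ we must rule out further solutions of $v(x)=h$. On the compact complement $[l,L]\setminus\bigcup_i \mathrm{int}\,I_i$, the continuous function $u$ attains a positive distance $d_1:=\dist(u,\{h\})>0$ from the level $h$ (since by construction all roots $b_i$ lie in the interiors of the $I_i$); shrinking $\epsilon<d_1$ gives $v\ne h$ there. For $x<l$ and $x>L$ we use condition (ii) of the bump definition together with $l<b_1$, $L>b_{2N}$: enlarging $A$ if necessary and using continuity of $u$ on the compact sets $[A_-,l]$ and $[L,A_+]$ (where $u<h$ strictly), we obtain a uniform bound $u(x)\le h-\gamma'$ for all $x\notin(l,L)$ with some $\gamma'>0$; then $\epsilon<\gamma'$ yields $v(x)<h$ for all such $x$. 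Hence the only roots of $v(x)=h$ are the $N$ points $b_i(\epsilon)$.

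\medskip

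\textbf{Step 3: The decay condition and conclusion.} Condition (ii) for $v$ follows from the previous paragraph: for $|x|>\max\{|l|,|L|\}$ we have $v(x)\le u(x)+\epsilon\le h-\gamma'+\epsilon<h$, so with $\gamma_v:=\gamma'-\epsilon>0$ and $A_v:=\max\{|l|,|L|\}$ we get $v(x)<h-\gamma_v$ for $|x|>A_v$. Since $v\in C^1(\R)$ by hypothesis, $v$ is a regular $(h;\,b_1(\epsilon),\ldots,b_{2N}(\epsilon))$-bump. Finally, $b_i(\epsilon)\to b_i$ as $\epsilon\to 0$: given any $\delta'\in(0,\delta)$, the argument of Step 1 applied with $I_i$ replaced by $[b_i-\delta',b_i+\delta']$ shows that for $\epsilon$ small enough the unique root in $I_i$ in fact lies in $[b_i-\delta',b_i+\delta']$; letting $\delta'\to0$ gives the claim. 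Taking $\epsilon$ to be the minimum of the finitely many thresholds produced above (which is still less than $\tfrac12\min_i|u'(b_i)|$) completes the proof.

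\medskip

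I expect the main obstacle to be Step 2, specifically extracting a \emph{uniform} separation of $u$ from the level $h$ on the unbounded tails $x\notin[l,L]$: condition (ii) only gives this for $|x|>A$, and one must combine it with a compactness argument on $[{-A},l]\cup[L,A]$ (after possibly enlarging $A$ so that $A>L$ and $-A<l$), using that the only solutions of $u(x)=h$ are the $b_i\in(l,L)$. Once that uniform gap is in hand, the rest is a routine packaging of the intermediate value theorem and monotonicity.
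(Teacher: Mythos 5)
Your proof is correct; the paper itself omits the argument entirely, calling it ``rather straightforward'' and deferring to \cite{Oleynik:2013}, and your transversality-near-each-$b_i$ plus uniform-separation-elsewhere argument (with the compactness step on $[-A,l]\cup[L,A]$ to bridge condition (ii) to the whole complement of $(l,L)$) is exactly the standard one intended. The only cosmetic slips are the inconsistent thresholds $\epsilon<m/2$ versus $\epsilon<m/4$ in Step~1 and ``$N$ points'' where you mean ``$2N$ points'' in Step~2, both harmless and absorbed by your final ``take the minimum of the finitely many thresholds'' step.
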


The proof is rather straightforward and can be found in \cite{Oleynik:2013}.

\begin{corollary}\label{cor:lemma:3}
Observe that there are values $b_i^{\pm}(\epsilon):\,b_i^{-}(\epsilon)<b_i<b_i^{+}(\epsilon)$ where $\bigcap \limits_{i=1}^{2N}(b_i^{-}(\epsilon),b_i^{+}(\epsilon))=\varnothing$ associated with $K_\epsilon(u;l,L)$ such that $b_i^{\pm}(\epsilon)\to b_i$ as $\epsilon\to 0,$ and for any $v\in K_\epsilon(u;l,L)$ we have the following estimates
\begin{equation*}
\left\{
\begin{array}{ll}
|v(x)-h|>\epsilon, & x\in \R \setminus \bigcup \limits_{i=1}^{2N} (b_i^{-}(\epsilon),b_i^{+}(\epsilon))\\
|v'(x)|>\epsilon, &  x\in \bigcup \limits_{i=1}^{2N} (b_i^{-}(\epsilon),b_i^{+}(\epsilon))\\
\end{array}
\right.
\end{equation*}
\end{corollary}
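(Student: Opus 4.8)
The plan is to deduce Corollary \ref{cor:lemma:3} directly from Lemma \ref{lemma:3} together with the quantitative control the proof of that lemma provides on the perturbed roots $b_i(\epsilon)$. First I would fix $\epsilon_0 \in \left(0, \tfrac12\min_i |u'(b_i)|\right)$ as in Lemma \ref{lemma:3}, so that every $v \in K_{\epsilon_0}(u;l,L)$ is a regular $(h;\,b_1(\epsilon_0),\ldots,b_{2N}(\epsilon_0))$-bump. The point is that for $\epsilon \le \epsilon_0$ the roots of $v(x)=h$ that fall in $[l,L]$ lie in a small neighbourhood of the unperturbed roots $b_i$ whose size is controlled by $\epsilon$; the natural choice is
$$
b_i^{-}(\epsilon) := b_i - r_i(\epsilon), \qquad b_i^{+}(\epsilon) := b_i + r_i(\epsilon),
$$
where $r_i(\epsilon)$ is chosen below and shrinks to $0$ as $\epsilon \to 0$. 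Because the $2N$ points $b_i$ are distinct, for $\epsilon$ small enough these intervals are pairwise disjoint, and in particular $\bigcap_{i=1}^{2N}(b_i^{-}(\epsilon),b_i^{+}(\epsilon)) = \varnothing$ (indeed already two of them fail to intersect).

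Next I would establish the two estimates. On each interval $(b_i^-(\epsilon),b_i^+(\epsilon))$, use that $u'$ is continuous and $|u'(b_i)| \ge 2\epsilon_0$, so by shrinking $r_i(\epsilon)$ we can guarantee $|u'(x)| > 3\epsilon$ there; since $\sup_{[l,L]}|u'-v'| < \epsilon$, we get $|v'(x)| > 2\epsilon > \epsilon$ on these intervals, which is the second estimate. For the first estimate, split $\R \setminus \bigcup_i (b_i^-(\epsilon),b_i^+(\epsilon))$ into the bounded part inside $[l,L]$ and the two unbounded tails. On the bounded part, continuity of $u$ and the fact that $b_i$ are the \emph{only} roots of $u=h$ imply $\min |u(x)-h| \ge c(\epsilon) > 0$ for some constant depending on how far we are from the $b_i$, i.e. on $r_i(\epsilon)$; choosing $r_i(\epsilon)$ (and possibly shrinking $\epsilon$ further) so that $c(\epsilon) > 2\epsilon$ gives $|v(x)-h| \ge |u(x)-h| - \epsilon > \epsilon$. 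On the tails $|x| > A$ we have $u(x) < h-\gamma$ from bump condition (ii), hence $v(x) < h - \gamma + \epsilon < h - \epsilon$ as soon as $\epsilon < \gamma/2$, so $|v(x)-h| > \epsilon$ there as well; one also has to cover the region between $[l,L]$ and $|x|>A$, which is handled exactly as the bounded part since $u$ stays away from $h$ there too. Finally, monotonicity of $r_i(\epsilon)$ (or simply taking $r_i$ to be the sup over the constraints, which is nonincreasing) yields $b_i^{\pm}(\epsilon) \to b_i$ as $\epsilon \to 0$.

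The main obstacle is organizing the choice of the radii $r_i(\epsilon)$ so that all constraints are met \emph{simultaneously} and the dependence on $\epsilon$ is monotone: the second estimate wants $r_i(\epsilon)$ small (to stay where $u'$ is large), while the first estimate wants $r_i(\epsilon)$ not too small relative to the gap $c(\epsilon)$ that $u$ keeps from $h$ outside these intervals — but since both $|u'|$ near $b_i$ and $|u-h|$ away from $b_i$ are controlled by the same continuity/compactness facts, there is a genuine window, and one just has to take $\epsilon$ small enough. I expect the cleanest presentation is to note that this is essentially a restatement of the construction already carried out in the proof of Lemma \ref{lemma:3} in \cite{Oleynik:2013}, so the corollary follows by inspecting that proof; I would therefore keep the argument brief, emphasizing only the explicit construction of the intervals and the three-region splitting for the first estimate.
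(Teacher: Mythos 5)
Your proposal is correct and is precisely the argument the paper has in mind: the paper states this corollary as an unproved observation following Lemma \ref{lemma:3} (whose proof it defers to \cite{Oleynik:2013}), and your construction of the intervals $(b_i-r_i(\epsilon),b_i+r_i(\epsilon))$, the use of $|u'(b_i)|>2\epsilon$ near each root, and the three-region splitting for the estimate on $|v(x)-h|$ is the natural way to fill in the details. The only quibble is the claim $|u'(x)|>3\epsilon$, which is not guaranteed for $\epsilon$ near $\tfrac12\min_i|u'(b_i)|$; but $|u'(x)|>2\epsilon$ suffices to give $|v'(x)|>\epsilon$, so this is a harmless constant.
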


\begin{definition}
A (regular) bump that is a solution to \eqref{eq:H} we call a (regular) bump solution.
\end{definition}

Lemma \ref{lemma:1} implies that any bump solution $u(x)$ of \eqref{eq:H} is in $BC^1(\R)$ and $|u(x)|\to 0$ as $|x|\to \infty.$ Thus, the threshold $h$ cannot be chosen negative.

\subsection{Bump solutions to \eqref{eq:H} with $\beta=\infty$}\label{sec:Limiting:problem}

In \cite{Amari:1977} Amari studied the equation \eqref{eq:H} under the assumption that $f_\beta=f_\infty=\chi_{[h,\infty)}$ with some $h>0.$ In this case one can find analytic expressions for the bump solutions. Let $\omega$ satisfy Assumption \ref{ass:omega} and suppose that \eqref{eq:H} with $\beta=\infty$ has a $(h;\,b_1,\ldots,b_{2N})$-bump solution, say $u_\infty.$

 Then it immediately follows from \eqref{eq:H} that
\begin{equation*}
u_\infty(x)=\sum\limits_{k=1}^{N}\, \int \limits_{b_{2k-1}}^{b_{2k}}\omega(x-y)dy
\end{equation*}
or, rewriting the equation above in terms of the anti-derivative
\begin{equation*}
W(x)=\int\limits_0^x \omega(y)dy,
\end{equation*}

\begin{equation}\label{eq:u:inf}
u_\infty(x)=\sum\limits_{k=1}^{N} \Big(W(x-b_{2k-1})-W(x-b_{2k})\Big).
\end{equation}

 Next, we assume that $u_\infty$ is symmetric and, thus, $b_i=-a_{N-i+1},$ and $b_{N+i}=a_{i},$ for $i=1,\ldots,N.$ In this new notation, $u_\infty$ is a $(h;\,-a_N,\ldots,-a_1,a_1,\ldots,a_N)$-bump and we can rewrite \eqref{eq:u:inf} as

\begin{equation}\label{eq:u:inf:new}
u_\infty(x)=\sum\limits_{k=1}^{N} (-1)^{N-k+1}\Big(W(x-a_{k})-W(x+a_{k})\Big).
\end{equation}

The vector ${\bf a}=(a_1,\ldots,a_N)^T$ with $a_i<a_{i+1},$ $i=1,\ldots,N-1,$ must be a solution of the system of $N$ nonlinear equations
\begin{equation} \label{eq:roots}
u_\infty(a_i)=\sum\limits_{k=1}^{N} (-1)^{N-k+1}\Big(W(a_i-a_{k})-W(a_i+a_{k})\Big)=h.
\end{equation}

Once ${\bf a}$ is found one can construct $u_\infty$ using the formula \eqref{eq:u:inf:new} and then  verify that the obtained function is indeed a bump, and thus, a bump solution to \eqref{eq:H}. By Lemma \ref{lemma:2}(iii) the function $u$ is automatically even.

Alternatively, when $\omega$ has a rational real Fourier transform one can obtain an analytical expression for bump solutions by solving the corresponding piecewise linear ordinary differential equation, see e.g. \cite{Laing:2002,Laing:2003:a,Krisner:2004}.
 We do not focus on this problem here but refer the reader to \cite{Amari:1977,Laing:2002,Murdock:2006} for more details.

  Further on we assume that the bump solution exists, it is symmetric, regular, and impose one extra assumption on the intersection points $a_i$ which role will be more clear later.

\begin{assumption}\label{ass:u:infty}
Let $h>0$ be fixed and $u_\infty$ given by \eqref{eq:u:inf:new} be a $(h;-a_N,\ldots,-a_1,a_1,\ldots,a_N)$-bump solution to \eqref{eq:H} with $f_\beta=f_\infty=\chi_{[h,\infty)}.$
\begin{itemize}
\item[(1)] The solution $u_\infty$ is a regular bump, that is,
\begin{equation}\label{eq:regularity}
|u'_\infty(a_i)|= \sum \limits_{k=1}^N (-1)^{k+i}(\omega(a_i-a_k)-\omega(a_i+a_k))>0,\quad i=1,...,N.
\end{equation}
\item[(2)] From \eqref{eq:roots} ${\bf a}$ is a solution to ${\bf G}({\bf a})={\bf 0}$ where
${\bf G}=(g_1,...,g_N)^T$ with $g_i=(-1)^{i+1}(u_\infty(a_i)-h).$
We assume that the Jacobian matrix of ${\bf G}$ derived at ${\bf a}$, $J({\bf a})=\Big(\partial g_i/\partial a_j\Big)_{ij}({\bf a})$ with
\begin{equation*}
\begin{array}{l}
\dfrac{\partial g_i}{\partial a_i}({\bf a})=|u'_\infty(a_i)|-\omega(0)-\omega(2a_i)\\
\\
\dfrac{\partial g_i}{\partial a_j}({\bf a})=(-1)^{i+j+1}(\omega(a_i-a_j)+\omega(a_i+a_j)),
\end{array}
\quad
\begin{array}{c}
i,j=1,\ldots,N,\\
 i\not=j,
\end{array}
\end{equation*}
has an inverse, that is, $\det(J({\bf a})) \not=0.$
\end{itemize}
\end{assumption}
In particular, Assumption \ref{ass:u:infty}(ii) guarantees that ${\bf a}$ is the isolated solution of ${\bf G}({\bf a})={\bf 0}.$

\subsection{Existence and approximation of bump solutions for large $\beta<\infty$ } \label{sec:Main:results}

We formulate our main results.

\begin{theorem}\label{th:1}
Let $h>0$ be fixed and $f_\beta$ and $\omega$ satisfy Assumptions \ref{ass:f} and \ref{ass:omega}.
Moreover, we assume that $\omega$ is such that for $\beta=\infty$ there exists a symmetric $(h; -a_N,\ldots,-a_1,a_1,\ldots,a_{N})$-bump solution $u_\infty$ of \eqref{eq:H} and Assumption \ref{ass:u:infty} is satisfied.
Then we have the following result.
\begin{itemize}
\item There is an $\epsilon>0$ such that for sufficiently large $\beta>0$ the operator
$\cH_\beta:K_\epsilon(u_\infty;-d,d) \to BC^1(\R)$ for any $d>a_N$ has a symmetric fixed point $u_\beta$ which is a regular bump. Moreover $\|u_\beta-u_\infty\|_{C^1}\to 0$ as $\beta \to \infty.$ \\

\item The bump solution $u_\beta \in K_\epsilon(u_\infty;-d,d)$ can be iteratively constructed and the sequence of successive approximations $\{u_{n}\}_{n\in\N\cup\{0\}}$
converges to the solution $u_\beta$ in $BC^1(\R)$-norm.  The sequence $u_{n},$ $n=0,1,\ldots,$ is defined by

\begin{subequations}\label{eq:u-appx}
\begin{align}
u_{n+1}=\int\limits_{-d}^{d}\omega(x-y) f_\beta(U_{n+1}(y))dy, \quad x \in \R \label{eq:u-appx:a}
\\
U_{n+1}(x)=\left(H_\beta(U_n)\right)(x)-{\bf s}^{T}(x) (\textsf{S}-\textsf{I})^{-1} {\bf p}_{n}, \quad x\in [-d,d]
\label{eq:u-appx:b}
\end{align}
\end{subequations}
with $U_0$ being the restriction of $u_\infty$ on $[-d,d],$

$$
\left( H_\beta U \right)(x):=\int\limits_{-d}^d \omega(x-y)f_\beta(U(y))dy \in C^1([-d,d]),
$$

$$
{\bf s}(x)=\left(\frac{\omega(x-a_1)+\omega(x+a_1)}{|u'_\infty(a_1)|},\ldots,
\frac{\omega(x-a_N)+\omega(x+a_N)}{|u'_\infty(a_N)|}\right)^T,
$$

and $\textsf{S}$ is an $N\times N$ matrix with the elements

$$
S_{ij}=\frac{\omega(a_i-a_j)+\omega(a_i+a_j)}{|u'_\infty(a_j)|}, \quad i,j=1,\ldots,N,
$$

and $\textsf{I}$ is the $N\times N$ identity matrix. The vector ${\bf p}_n$ is defined as
$$
{\bf p}_{n}:=(p_n(a_1),...,p_n(a_N))^T, \quad p_n(a_i)=\left(H_\beta U_n-U_n\right)(a_i),\quad i=1,\ldots,N.
$$
\end{itemize}
\end{theorem}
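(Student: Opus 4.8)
The plan is to recast the fixed point problem \eqref{eq:H} near $u_\infty$ as a problem to which the Implicit Function Theorem applies, and then to recognise the iteration \eqref{eq:u-appx} as the Newton–Kantorovich scheme for that problem. The key difficulty is that $\cH_\infty$ is discontinuous on standard function spaces, so one cannot apply the IFT directly in $BC^1(\R)$; the spatial structure of a regular bump must be exploited to regularise the problem. Here is how I would proceed.

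First I would decompose a candidate fixed point into its ``outer'' and ``inner'' parts. By Lemma \ref{lemma:1} and Assumption \ref{ass:u:infty}, a regular bump solution is determined by its restriction $U$ to the compact interval $[-d,d]$ (with $d>a_N$), via $u(x)=\int_{-d}^{d}\omega(x-y)f_\beta(U(y))\,dy$ — this is exactly \eqref{eq:u-appx:a}, and is legitimate because, for $v\in K_\epsilon(u_\infty;-d,d)$ with $\epsilon$ small, Corollary \ref{cor:lemma:3} guarantees $v(y)<h$ outside $[-d,d]$ so $f_\beta(v(y))=0$ there for all sufficiently large $\beta$. Thus the genuine unknown is $U\in C^1([-d,d])$, and the self-consistency condition is $U=H_\beta U$ on $[-d,d]$. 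The point of restricting to a compact interval is that $H_\beta$, unlike $\cH_\beta$, maps into $C^1([-d,d])$ with good continuity/differentiability properties even for $\beta=\infty$ as long as we stay away from the configuration where the bump crossing points are lost.

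Second, I would introduce the crossing-point coordinates $a=(a_1,\dots,a_N)$ as auxiliary variables and set up the ``boundary'' map. For $U$ near $U_0$ with crossings at points near $\pm a_i$, differentiating $U=H_\beta U$ and using $f_\beta'\to 0$ away from $h$ (Assumption \ref{ass:f}(5)), the operator $H_\infty$ acts on the relevant class of functions essentially as the explicit integral over the superlevel set, whose Fréchet derivative in the crossing points is governed by the matrix $\textsf S$ and vector $\textsf s(x)$ appearing in the theorem — note $S_{ij}$ and $s_i(x)$ are precisely the derivatives $\partial/\partial a_j$ of $W(x\mp a_j)$ evaluated using $\omega$. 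The invertibility of $\textsf S-\textsf I$ is equivalent, after the bookkeeping with signs $(-1)^{i+j}$ absorbed into the definition of ${\bf G}$ in Assumption \ref{ass:u:infty}(2), to $\det J({\bf a})\neq 0$; this is the hypothesis that makes the linearised equation $(H_\infty'-\mathrm{Id})\,\delta U = \text{(data)}$ solvable, with the finite-dimensional correction $-{\bf s}^T(\textsf S-\textsf I)^{-1}{\bf p}$ in \eqref{eq:u-appx:b} being exactly the explicit inverse acting on the ``crossing-point residuals'' ${\bf p}_n=(H_\beta U_n-U_n)(a_i)$.

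Third, with the linearised operator at $(\beta=\infty,U_0)$ shown to be boundedly invertible on $C^1([-d,d])$ (or on $C^{0,\alpha}$, whichever is technically cleaner for the contraction estimate), I would verify the hypotheses of the Newton–Kantorovich theorem: (a) the residual $\|H_\beta U_0 - U_0\|$ is small for large $\beta$, using \eqref{eq:f:uniformly} and Assumption \ref{ass:f}(4); (b) the map $U\mapsto H_\beta U$ has Lipschitz (indeed uniformly small, by Assumption \ref{ass:f}(5)) derivative on a ball around $U_0$, uniformly in large $\beta$; (c) the inverse of the linearisation at nearby $(\beta,U)$ stays uniformly bounded, which follows from (b) plus a perturbation argument. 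Newton–Kantorovich then yields a unique fixed point $U_\beta$ in a small $C^1([-d,d])$-ball, with the iteration \eqref{eq:u-appx:b} converging to it; extending by \eqref{eq:u-appx:a} gives $u_\beta\in BC^1(\R)$ with $\|u_\beta-u_\infty\|_{C^1}\to 0$ as $\beta\to\infty$. Finally, since $u_\beta\to u_\infty$ in $C^1$ and $u_\infty$ is a regular bump, Lemma \ref{lemma:3} shows $u_\beta$ is itself a regular $(h;-a_N(\beta),\dots,a_N(\beta))$-bump, and symmetry is preserved by Lemma \ref{lemma:2}(ii)–(iii) provided the iteration is started from the symmetric $U_0$ and the correction term respects the symmetry (which it does, $\textsf s$ and $\textsf S$ being built from the even function $\omega$).

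The main obstacle I expect is step two: justifying that $H_\infty$ (and its derivative) really is well-defined and $C^1$-smooth on the relevant neighbourhood despite the discontinuity of $f_\infty$. The resolution is that on the set $K_\epsilon(u_\infty;-d,d)$ the superlevel set $\{U\geq h\}$ depends smoothly on $U$ precisely because the crossings are transversal ($u_\infty'(a_i)\neq 0$); one parametrises this dependence by the implicitly-defined crossing points $a_i(U)$ and checks that the map $U\mapsto \int_{\{U\ge h\}}\omega(x-y)\,dy$ inherits $C^1$-regularity with derivative expressed through $\textsf s$ and $\textsf S$. Making this rigorous — essentially a careful application of the IFT at the level of the crossing points, feeding into the IFT at the level of $U$ — is the technical heart of the argument.
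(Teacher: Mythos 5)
Your overall strategy coincides with the paper's: restrict to $[-d,d]$, linearise $U\mapsto U-H_\beta U$ around $U_\infty$, identify $H_\infty'(U_\infty)$ as the rank-$N$ operator built from ${\bf s}$ and $\textsf{S}$ via the transversal crossings, tie the invertibility of $\textsf{I}-\textsf{S}$ to $\det J({\bf a})\neq 0$, and read \eqref{eq:u-appx:b} as a frozen-derivative Newton iteration. Two steps of your verification plan would nevertheless fail as stated.

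First, your hypothesis (b) --- that $U\mapsto H_\beta U$ has a uniformly small, uniformly Lipschitz derivative on a ball around $U_0$, uniformly in large $\beta$ --- is false. By Assumption \ref{ass:f}(5) the derivative $f_\beta'$ is small only \emph{away} from the threshold $h$; near $h$ it concentrates (for the model nonlinearities $\|f_\beta'\|_\infty\sim\beta$, $\|f_\beta''\|_\infty\sim\beta^2$), so $H_\beta'(U)=S[\beta,U]$ converges to the non-small rank-$N$ operator $S[\infty,U_\infty]$ and the Lipschitz constant of $U\mapsto S[\beta,U]$ blows up like $\beta^2$. Since the residual $\|H_\beta U_\infty-U_\infty\|$ decays only like $\|f_\beta-f_\infty\|_{L^1}\sim 1/\beta$, the Newton--Kantorovich product condition cannot be satisfied on any ball, however small. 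The paper avoids this by invoking the Implicit Function Theorem in a form that requires only \emph{joint continuity} of $P'_U$ at the single point $(\infty,U_\infty)$; proving that continuity, $\|S[\beta,U]-S[\infty,U_\infty]\|\to 0$ as $(\beta,U)\to(\infty,U_\infty)$, is the technical heart of the argument (Lemma \ref{lemma:6} and Proposition \ref{prop:conv}) and uses the change of variables along the local inverse of $U$ near each crossing --- precisely the transversality you identify, but exploited as a continuity statement rather than a Lipschitz one.

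Second, you leave open whether to work in $C^1([-d,d])$ or $C^{0,\alpha}([-d,d])$, and this is not a matter of convenience: the continuity of the derivative fails in the $C^1$ (equivalently Lipschitz) norm of the \emph{target} space and holds only for $\alpha<1$ (Remark \ref{rem:H-space vs Lip}); the paper therefore takes $\cU=C^1_{even}([-d,d])$ but $\cW=C^{0,\alpha}_{even}([-d,d])$. Your remaining steps --- the reconstruction \eqref{eq:u-appx:a}, the symmetry argument, and the use of Lemma \ref{lemma:3} to conclude that $u_\beta$ is a regular bump --- match the paper's Proposition \ref{prop:1} and Step 2.
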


\section{Proof of Theorem \ref{th:1}}\label{sec:Proof}

Let $h>0$ be fixed and let $f_\beta$ and $\omega$ satisfy Assumption \ref{ass:f} and Assumptions \ref{ass:omega}, respectively. Moreover, we assume that $\omega$ is such that for $\beta=\infty$ there exists a symmetric regular \\
$(h; -a_N,\ldots,-a_1,a_1,\ldots,a_{N})$ - bump solution $u_\infty$ and Assumption \ref{ass:u:infty} is satisfied.

Let $d=a_N+\delta$ for some $\delta>0$ and define a set of even functions
\begin{equation*}
K_\epsilon(u_\infty):=K_\varepsilon(u_\infty; -d,d)\cap C^1_{even}(\R)
\end{equation*}
where $K_\varepsilon(u_\infty; -d,d)$ is given as in \eqref{eq:K}.
We assume that $\epsilon>0$ is small enough such that $K_\epsilon(u_\infty)$ contains only regular symmetric $N$-bump solutions, see Lemma \ref{lemma:3}.

If $u\in K_\epsilon(u_\infty)$ is a solution to \eqref{eq:H} then due to Corollary \ref{cor:lemma:3} and the choice of $d,$ it is given as
$$u(x)=\int_{-d}^{d} \omega(x-y)f_\beta(u(y))dy.$$

Now let $T_1: BC^1(\R) \to B([-d,d])$ be the restriction operator given as $T_1u=u|_{[-d,d]},$
 and $T_2$ be the reconstruction operator
 \begin{equation}\label{eq:T2}
 (T_2 U)(x)=\int\limits_{-d}^d \omega(x-y)f_\beta(U(y)-h)dy,
 \end{equation}
 where by Lemma \ref{lemma:1} we have $T_2: B([-d,d]) \to BC^1(\R).$

From now on we use capital letters to denote the restriction of $u\in K_\epsilon(u_\infty),$ that is,
$U=T_1u$  and, in particular, $U_\infty=T_1u_\infty.$
Notice that $T_1 K_\epsilon(u_\infty)=B_\epsilon(U_\infty)$ is the $\epsilon$-ball in $C^1_{even}([-d,d]),$ that is,
\begin{equation*}
  B_\epsilon(U_\infty):=\{U :\, \|U-U_\infty\|_{C^1([-d,d])}<\epsilon\} \cap C^1_{even}(\R).
\end{equation*}

It is obvious that Lemma \ref{lemma:3} and Corollary \ref{cor:lemma:3} can be directly reformulated for $B_\epsilon(U_\infty)$ as $a_i\in [0,d),$ $i=1,\ldots,N$.
We formulate this as a remark.

\begin{remark}\label{rem:3}
From Corollary \ref{cor:lemma:3} there are $a_i^{\pm}(\epsilon),$ $i=1,\ldots,N,$ such that $a_i^{\pm}(\epsilon) \to 0$ as $\epsilon \to 0$ and for any $U\in B_\epsilon(U_\infty)$
\begin{subequations}\label{ineq:bumps}
\begin{align}
|U(x)-h|>\epsilon, & \quad x\in \R \setminus \bigcup \limits_{i=1}^{N} (\pm a_i^{\mp}(\epsilon),\pm a_i^{\pm}(\epsilon)),\label{ineq:bumps:a}
\\
|U'(x)|>\epsilon, &  \quad x\in \bigcup \limits_{i=1}^{N} (\pm a_i^{\mp}(\epsilon),\pm a_i^{\pm}(\epsilon)).
\label{ineq:bumps:b}
\end{align}
\end{subequations}
\end{remark}

Hence, if $u\in K_\epsilon(u_\infty)$ is the solution to
\eqref{eq:H} then $u=T_2(T_1 u)$ and $U=T_1u \in B_\epsilon(U_\infty)$ is a solution to the fixed point problem
\begin{equation} \label{eq:H2}
U=H_\beta U, \quad (H_\beta U)(x):=\int\limits_{-d}^{d} \omega(x-y)f_\beta(U(y))dy.
\end{equation}

On the other hand, if $U\in B_\epsilon(U_\infty)$ is the solution to \eqref{eq:H2}
there is no guarantee that any $T_1$ - preimage of $U$ is a fixed point of $\cH_\beta$ in \eqref{eq:H}.
However, if it is, then it must be given as $u=T_2U.$

With the next proposition we claim there exist sufficiently small $\epsilon>0$ and sufficiently large $\beta>0$ such that for any solution $U\in B_\epsilon(U_\infty)$ of \eqref{eq:H2} the corresponding $u=T_2U$ is a $N$-bump solution to \eqref{eq:H}. We need an auxiliary lemma.

\begin{lemma}\label{lemma:4}
The Nemytskii operator $N(\beta;U)=f_\beta(U):(0,\infty) \times L^\infty([-d,d])\to L^p([-d,d]),$ $1\leq p \leq \infty$ is jointly continuous in $(\beta_0, U_0)$ for any $\beta_0 \in (0,\infty)$ and $U_0 \in L^\infty([-d,d]).$  Moreover, $N(\beta;U)$ is jointly continuous in $(\infty, U_0)$ for any $U_0 \in B_\epsilon(U_\infty)$ as a map from $(0,\infty] \times B_\epsilon(U_\infty)$ to $L_{even}^p([-d,d])$ for $1\leq p <\infty.$
\end{lemma}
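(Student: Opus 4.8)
The plan is to split the statement into its two assertions and treat them by the standard circle of ideas around continuity of Nemytskii (superposition) operators, adapted to the peculiar feature here: the limiting symbol $f_\infty = \chi_{[h,\infty)}$ is discontinuous, so joint continuity at $\beta=\infty$ can only hold on the restricted set $B_\epsilon(U_\infty)$ where, by Remark \ref{rem:3}, functions stay quantitatively away from the jump point $h$ outside tiny neighbourhoods of the points $\pm a_i$. For the first assertion (joint continuity at finite $\beta_0$), I would fix $\beta_0\in(0,\infty)$ and $U_0\in L^\infty([-d,d])$, pick $R>\|U_0\|_{L^\infty}+1$, and use that on the compact parameter-interval $[\beta_0-\eta,\beta_0+\eta]$ and the compact $t$-interval $[-R,R]$ the family $f_\beta(t)$ is uniformly continuous jointly in $(\beta,t)$: this is exactly Assumption \ref{ass:f}(4) (continuity in $\beta$ uniform in $t$ on bounded intervals) combined with each $f_\beta$ being Lipschitz (Assumption \ref{ass:f}(5)). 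Hence for $U$ with $\|U-U_0\|_{L^\infty}$ small and $|\beta-\beta_0|$ small, $\|f_\beta(U)-f_{\beta_0}(U_0)\|_{L^\infty}$ is small, and since $[-d,d]$ has finite measure, smallness in $L^\infty$ gives smallness in every $L^p$, $1\le p\le\infty$. That disposes of the first claim.

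For the second assertion I would argue joint continuity at $(\infty,U_0)$ with $U_0\in B_\epsilon(U_\infty)$ into $L^p_{\mathrm{even}}([-d,d])$, $1\le p<\infty$. First note $f_\beta(U)$ is even whenever $U$ is even (since $f_\beta$ acts pointwise and $B_\epsilon(U_\infty)\subset C^1_{even}$), so the target space is correct. Now take $U\in B_\epsilon(U_\infty)$ close to $U_0$ in $C^1$ and $\beta$ large; I want $\|f_\beta(U)-f_\infty(U_0)\|_{L^p}$ small. Write the difference as $f_\beta(U)-f_\infty(U)$ plus $f_\infty(U)-f_\infty(U_0)$. By Remark \ref{rem:3}, outside the set $E_\epsilon:=\bigcup_{i=1}^N(\pm a_i^{\mp}(\epsilon),\pm a_i^{\pm}(\epsilon))$ we have $|U(x)-h|>\epsilon$, so on $[-d,d]\setminus E_\epsilon$ the convergence \eqref{eq:f:uniformly} gives $\sup|f_\beta(U)-f_\infty(U)|\to 0$ as $\beta\to\infty$; on the small set $E_\epsilon$ we bound the integrand by $1$ and pay only $\mu(E_\epsilon)^{1/p}$, which tends to $0$ as $\epsilon\to 0$ by Remark \ref{rem:3}. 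For the second piece, $f_\infty(U)-f_\infty(U_0)$ is nonzero only on the symmetric difference of $\{U>h\}$ and $\{U_0>h\}$; since both $U$ and $U_0$ cross $h$ transversally with derivative bounded below by $\epsilon$ near each $\pm a_i$ (Remark \ref{rem:3} again) and agree up to $\|U-U_0\|_{C^1}$, this symmetric difference has measure $O(\|U-U_0\|_{C^1}/\epsilon)$, hence its $L^p$ contribution is small. Combining, given $\delta>0$ one first fixes $\epsilon$ small (controlling the $E_\epsilon$-term), then chooses $\beta$ large and $\|U-U_0\|_{C^1}$ small.

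The one subtlety to be careful about — and what I would flag as the main obstacle — is that $\epsilon$ is not a free parameter in the final statement: $B_\epsilon(U_\infty)$ is fixed by the hypotheses of Theorem \ref{th:1}, so I cannot "send $\epsilon\to 0$" inside this lemma. The clean way around this is to observe that for a \emph{fixed} $\epsilon$ and a fixed $U_0\in B_\epsilon(U_\infty)$, the point $U_0$ itself is a regular bump, so it crosses $h$ transversally at finitely many points and $\{U_0>h\}$ has a \emph{genuine} boundary of measure zero; then for the $E_\epsilon$-term I localize not around the worst-case interval $(\pm a_i^{\mp}(\epsilon),\pm a_i^{\pm}(\epsilon))$ but around the actual level set $\{|U_0-h|\le\xi\}$ for an auxiliary $\xi=\xi(\delta)$ chosen small, using that $\mu(\{|U_0-h|\le\xi\})\to 0$ as $\xi\to 0$ because $U_0$ is a regular bump; uniform continuity \eqref{eq:f:uniformly} handles the complement, and a neighbourhood of $U_0$ in $C^1$ stays inside $\{|U-h|\le 2\xi\}$ on that small set and away from $h$ elsewhere. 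With that reorganization every estimate is uniform over a $C^1$-neighbourhood of $U_0$, which is exactly joint continuity at $(\infty,U_0)$, and the rest is the routine bookkeeping sketched above.
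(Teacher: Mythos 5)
Your proposal is correct, and its overall skeleton coincides with the paper's: both split $f_\beta(U)-f_{\beta_0}(U_0)$ into a $\beta$-variation term $f_\beta(U)-f_{\beta_0}(U)$ and a $U$-variation term $f_{\beta_0}(U)-f_{\beta_0}(U_0)$, dispose of the finite-$\beta_0$ case via Assumption \ref{ass:f}(4) plus the Lipschitz bound from Assumption \ref{ass:f}(5), and handle $f_\infty(U)-f_\infty(U_0)$ by observing it is supported on a set of measure $O(\|U-U_0\|_\infty/\epsilon)$ thanks to the transversality in Remark \ref{rem:3}. Where you genuinely diverge is the one nontrivial term, $\int|f_\beta(U)-f_\infty(U)|^p$ near the threshold crossings. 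The paper keeps the fixed intervals $(a_i^-(\epsilon),a_i^+(\epsilon))$, uses $|U'|>\epsilon$ there to invert $U$, changes variables to $t=U(x)$, and bounds the contribution by $\epsilon^{-p}\int_m^M|f_\beta(t)-f_\infty(t)|^p\,dt$, which tends to $0$ by dominated convergence; this has the side benefit of giving convergence that is \emph{uniform} over all of $B_\epsilon(U_\infty)$, which the paper states and reuses. You instead correctly flag that $\mu(E_\epsilon)$ is not small for the fixed $\epsilon$ and repair this by introducing an auxiliary level-set parameter $\xi$, using $\mu(\{|U_0-h|\le\xi\})\le 4N\xi/\epsilon\to0$ (again from Remark \ref{rem:3}) to make the exceptional set small and \eqref{eq:f:uniformly} on its complement. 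This is more elementary (no change of variables, no dominated convergence) and fully suffices for pointwise joint continuity at $(\infty,U_0)$, which is all the lemma asserts; note also that your $\xi$-argument upgrades to the paper's uniform-in-$U$ statement if you apply Remark \ref{rem:3} to $U$ rather than to $U_0$. The only cosmetic overreach is the phrase ``uniformly continuous jointly in $(\beta,t)$'' in the first part: Assumption \ref{ass:f}(5) gives no uniform bound on the Lipschitz constants of $f_\beta$ for $\beta$ near $\beta_0$, but your two-term split only needs the Lipschitz constant of the single function $f_{\beta_0}$, so the argument stands.
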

\begin{proof}

Note that $f_\beta,$ $\beta \in (0,\infty]$ is uniformly bounded and thus, $N(\beta,U)$ is in $L^p([-d,d])$
 and, if $U$ is even, in $L^p_{even}([-d,d])$, $1\leq p\leq \infty.$

Let $\beta, \beta_0\in (0,\infty]$ and $U, U_0 \in L^\infty([-d,d])$ then we have the estimate

\begin{equation*}
\begin{split}
\|N(\beta,U)-N(\beta_0,U_0)\|^p_{L^p([-d,d])}=&\int\limits_{-d}^d \left|f_{\beta}(U(x))-f_\infty(U_0(x))\right|^p dx \leq \\
&\Sigma_1(U)+\Sigma_2(\beta_0)
\end{split}
\end{equation*}
where
\begin{equation*}
\Sigma_1(U):= \int\limits_{-d}^d \left|f_{\beta}(U(x))-f_{\beta_0}(U(x))\right|^p dx,
\end{equation*}
and
\begin{equation*}
\Sigma_2(\beta_0):= \int \limits_{-d}^d \left|f_{\beta_0}(U(x))-f_{\beta_0}(U_0(x))\right|^p dx.
\end{equation*}

For  $\beta\to \beta_0,$ $\beta,\beta_0 \in (0,\infty),$ $\Sigma_1(U) \to 0$ uniformly in $U\in L^\infty([-d,d])$ by Assumption \ref{ass:f}(4). From Assumption \ref{ass:f}(5) we obtain
\begin{equation*}
\Sigma_2(\beta_0)\leq 2d \|f'_{\beta_0}\|_\infty \|U-U_0\|_{L^\infty([-d,d])} \to 0 \mbox{ as }
\|U-U_0\|_{L^\infty([-d,d])} \to 0.
\end{equation*}
Thus, $(0,\infty) \times L^{\infty}([-d,d]) \mapsto N(\beta,U)$ is jointly continuous in $(\beta_0,U_0).$

Now we let $\beta\to \infty$ and show that $\Sigma_1(U)$  converges to zero uniformly for all $U\in B_\epsilon(U_\infty).$ We introduce $D:=\bigcup_{i=1}^N (a_i^-(\epsilon),a_i^-(\epsilon))$ with $a^{\pm}_i(\epsilon)$ given as in \eqref{ineq:bumps}. Then we have
\begin{equation} \label{eq:Sigma:1}
\begin{split}
\Sigma_1(U)=& \int\limits_{[-d,d]/D} \left|f_{\beta}(U(x))-f_{\beta_0}(U(x))\right|^p dx +\\
&\int\limits_D \left|f_{\beta}(U(x))-f_{\beta_0}(U(x))\right|^p dx.
\end{split}
\end{equation}

Assumption \ref{ass:f}, or more precisely \eqref{eq:f:uniformly},  yields
the uniform in $U$ convergence of the first integral in \eqref{eq:Sigma:1} as $\beta \to \infty.$

To estimate the second integral in \eqref{eq:Sigma:1} note that $|U'(x)|>\epsilon$ on $(a_i^{-}(\epsilon),a_{i}^{+}(\epsilon)),$ $i=1,\ldots,N,$ see Remark \ref{rem:3}, \eqref{ineq:bumps:b}.  Thus, there exists an inverse of $U(x),$ $k_i(t):(U(a_i^{-}(\epsilon)),U(a_i^{+}(\epsilon)))\to (a_i^{-}(\epsilon),a_i^{+}(\epsilon))$ with $|k_i'(t)|=1/|U'(x)|\leq 1/\epsilon.$

Define $M:=\max_{x\in [-d,d]}U_\infty(x)+\epsilon$ and $m:=\min_{x\in [-d,d]}U_\infty(x)-\epsilon$. We  have
\begin{equation*}
\begin{split}
 \int\limits_{a_i^-(\epsilon)}^{a_i^+(\epsilon)} \left|f_{\beta}(U(x))-f_{\infty}(U(x))\right|^p dx=
 &\int\limits_{U(a_i^{-}(\epsilon))}^{U(a_i^{+}(\epsilon))}\left|f_{\beta}(t)-f_{\infty}(t)\right| |k'_i(t)|dt \leq \\
 &\epsilon^{-p} \int\limits_m^M |f_{\beta}(t)-f_\infty(t)|^p dt.
 \end{split}
\end{equation*}
 Hence,
$$\int\limits_D \left|f_{\beta}(U_n(x))-f_{\beta_0}(U_0(x))\right|^p dx \leq N \epsilon^{-p} \int\limits_m^M |f_{\beta}(t)-f_\infty(t)|^p dt \to 0, \quad \beta \to \infty$$
by the Lebesgue dominant convergence theorem. Thus $\Sigma_1(U) \to 0$ as $\beta\to \infty.$

Next, we consider $\Sigma_2(\infty)$. Let $U=U_n, U_0 \in B_\epsilon(U_\infty)$ and  $\|U_n-U_0\|_{C^1([-d,d])}\to 0$ as $n\to \infty.$ In order to avoid introducing new notation we assume $U_0=U_\infty.$ For $U_0\not=U_\infty$ the same analysis applies. Note that as $n\to \infty,$ $U_n \in B_{1/n}(U_\infty).$ Define $D_n:=\bigcup_{i=1}^N (a_i^-(1/n),a_i^+(1/n))$ with $a_i^{\pm}(\epsilon),$ $\epsilon=1/n,$ see Remark \ref{rem:3}. Observe that as $n\to \infty$  the sequences  $a^{\pm}_i(1/n)\to a_i$  and $\mu(D_n)\to 0.$

From \eqref{ineq:bumps:a} $f_\infty(U_n(x))=f_\infty(U_\infty(x))$ for all $x\in [-d,d]\setminus D_n$ and therefore
$$\Sigma_2(\infty)= \int_{D_n} |f_\infty(U_n(x))-f_\infty(U_0(x))|^p dx \leq 2^{p+1} \mu(D_n) \to 0 \mbox{ as } n\to 0.$$

Convergence properties of $\Sigma_1(U)$ and $\Sigma_2(\infty)$ result in the joint continuity of $(0,\infty]\times B_\epsilon(U_\infty) \mapsto N(\beta, U)$ at $(\infty,U_0).$
\end{proof}

\begin{proposition}\label{prop:1}
Let $f_\beta$ and $\omega$ satisfy Assumptions \ref{ass:f} and Assumptions \ref{ass:omega}, respectively.
Moreover, there exists $(h;-a_N,\ldots,-a_1,a_1,\ldots,a_N)$-bump solution $u_\infty$ of \eqref{eq:H} for $\beta=\infty$ that satisfies Assumption \ref{ass:u:infty}, and $U_\infty=T_1u_\infty$ being its restriction on $[-d,d].$
Then there exists $\epsilon>0$  such that starting from sufficiently large $\beta,$ if $U_\beta \in B_\epsilon(U_\infty)$ is a solution to \eqref{eq:H2} then
$u_\beta:=T_2 U_\beta$
is the solution to \eqref{eq:H}.
\end{proposition}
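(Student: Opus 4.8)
The plan is to show that the natural extension $u_\beta:=T_2U_\beta$ of a solution $U_\beta\in B_\epsilon(U_\infty)$ of \eqref{eq:H2} is a genuine fixed point of $\cH_\beta$ on all of $\R$, and in fact a regular symmetric $N$-bump solution. By Lemma~\ref{lemma:1} the function
\[
u_\beta(x)=\int_{-d}^{d}\omega(x-y)f_\beta(U_\beta(y))\,dy
\]
lies in $BC^1(\R)$, tends to $0$ at infinity, and is even since $U_\beta$ is even and $\omega$ is symmetric. Restricting to $x\in[-d,d]$, the integral above equals $(H_\beta U_\beta)(x)=U_\beta(x)$, because $U_\beta$ solves \eqref{eq:H2}; hence $u_\beta$ agrees with $U_\beta$ on $[-d,d]$. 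Consequently, the whole statement reduces to the \emph{exterior bound} $u_\beta(y)<h$ for every $|y|\ge d$: once this is known, Assumption~\ref{ass:f}(2) forces $f_\beta(u_\beta(y))=0$ on $\{|y|\ge d\}$, so that for every $x\in\R$
\[
\int_\R\omega(x-y)f_\beta(u_\beta(y))\,dy=\int_{-d}^{d}\omega(x-y)f_\beta(u_\beta(y))\,dy=\int_{-d}^{d}\omega(x-y)f_\beta(U_\beta(y))\,dy=u_\beta(x),
\]
i.e. $u_\beta=\cH_\beta u_\beta$.

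To get the exterior bound I would compare $u_\beta$ with $u_\infty$. Since $\{u_\infty\ge h\}\subset[-a_N,a_N]\subset(-d,d)$, the identity $u_\infty=\cH_\infty u_\infty$ collapses to $u_\infty(x)=\int_{-d}^d\omega(x-y)f_\infty(U_\infty(y))\,dy$ for all $x\in\R$, and therefore, uniformly in $x$,
\[
|u_\beta(x)-u_\infty(x)|\le\int_{-d}^d|\omega(x-y)|\,\bigl|f_\beta(U_\beta(y))-f_\infty(U_\infty(y))\bigr|\,dy\le\|\omega\|_\infty\,\bigl\|N(\beta;U_\beta)-N(\infty;U_\infty)\bigr\|_{L^1([-d,d])}.
\]
By Lemma~\ref{lemma:4} the Nemytskii operator is jointly continuous at $(\infty,U_\infty)$ as a map into $L^1_{even}([-d,d])$, so for every $\eta>0$ there exist $\epsilon>0$ and $\beta_\eta$ such that the last $L^1$-norm is below $\eta$ for all $\beta>\beta_\eta$ and all $U_\beta\in B_\epsilon(U_\infty)$; hence $\|u_\beta-u_\infty\|_{L^\infty(\R)}<\|\omega\|_\infty\eta$. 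On the other hand, $u_\infty$ is a $(h;-a_N,\ldots,-a_1,a_1,\ldots,a_N)$-bump with $a_N<d$, so it is continuous and strictly below $h$ on the closed set $\{|x|\ge d\}$ and $u_\infty(x)\to0$ as $|x|\to\infty$; a compactness-plus-decay argument then produces $\nu>0$ with $u_\infty(y)\le h-2\nu$ for all $|y|\ge d$.

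Finally I would fix $\eta:=\nu/(1+\|\omega\|_\infty)$, take the resulting $\epsilon$ (shrinking it if necessary so that, by Lemma~\ref{lemma:3} and Remark~\ref{rem:3}, every $U\in B_\epsilon(U_\infty)$ is already a regular symmetric $N$-bump on $[-d,d]$) and $\beta_0:=\beta_\eta$. Then for $\beta>\beta_0$ and any solution $U_\beta\in B_\epsilon(U_\infty)$ of \eqref{eq:H2} one obtains $u_\beta(y)\le u_\infty(y)+\|u_\beta-u_\infty\|_\infty<(h-2\nu)+\nu<h$ for all $|y|\ge d$, which is exactly the exterior bound above; hence $u_\beta=\cH_\beta u_\beta$. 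Combined with $u_\beta|_{[-d,d]}=U_\beta$ being a regular symmetric $N$-bump (the zeros of $u_\beta-h$ lying only in the small intervals around $\pm a_i$, with $u_\beta'\neq0$ there) and with $u_\beta(x)<h-\nu$ for $|x|\ge d$, $u_\beta\in BC^1(\R)$, $u_\beta(x)\to0$, this shows $u_\beta$ is a regular symmetric $N$-bump solution of \eqref{eq:H}. I expect the exterior estimate to be the main obstacle: one cannot argue uniformly in the $C^1$-topology near $u_\infty$ because $f_\infty$ is discontinuous, and the way around it is to route the comparison through the $L^1$-continuity of the Nemytskii map supplied by Lemma~\ref{lemma:4}, turning it into a uniform $L^\infty$-bound on $u_\beta-u_\infty$ via the boundedness of $\omega$.
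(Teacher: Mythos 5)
Your proof is correct and follows the same overall strategy as the paper's: extend $U_\beta$ to $u_\beta=T_2U_\beta$, show $u_\beta$ is uniformly close to $u_\infty$ so that it is a regular $N$-bump whose super-level set $\{u_\beta\ge h\}$ stays inside $(-d,d)$, and conclude via $\supp f_\beta\subset[h,\infty)$ that $\cH_\beta u_\beta=T_2U_\beta=u_\beta$. You are more explicit than the paper about the reduction to the exterior bound $u_\beta<h$ on $\{|y|\ge d\}$ and about why $u_\beta|_{[-d,d]}=U_\beta$; the paper compresses all of this into ``it is sufficient to show $u_\beta\in K_\rho(u_\infty)$.''

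The one genuine difference is in how the key estimate is organized, and your version is actually the more robust one. The paper splits $f_\beta(u_\beta)-f_\infty(u_\infty)$ into $f_\beta(U_\beta)-f_\beta(U_\infty)$ (bounded by $2d\,\|f'_\beta\|_\infty\|U_\beta-U_\infty\|_\infty$) plus $f_\beta(U_\infty)-f_\infty(U_\infty)$ (controlled by continuity of $N(\cdot,U_\infty)$ in $\beta$ alone), which leads to a threshold $\epsilon<\rho/\bigl(1+\|\omega\|_\infty+2d\|f'_\beta\|_\infty\|\omega\|_\infty\bigr)$ that degenerates as $\beta\to\infty$ because $\|f'_\beta\|_\infty$ is unbounded in $\beta$ for a family approaching a step function; as written, the paper's choice of $\epsilon$ is $\beta$-dependent. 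You instead invoke the \emph{joint} continuity of the Nemytskii operator at $(\infty,U_\infty)$ from Lemma~\ref{lemma:4} in one stroke, obtaining a single $L^1$ smallness valid for all $U_\beta\in B_\epsilon(U_\infty)$ and all $\beta>\beta_\eta$, so your $\epsilon$ and $\beta_0$ can be fixed once and for all. This is exactly what Lemma~\ref{lemma:4} was proved for, and it removes the $\|f'_\beta\|_\infty$ blemish. Your proof is complete as stated.
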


\begin{proof}
It is sufficient to show that $u_\beta \in K_\rho(u_\infty)$ where $\rho>0$ is small enough so
that $u_\beta$ is a regular $N$-bump, see Lemma \ref{lemma:3}. We derive the estimate

\begin{equation*}
\begin{split}
\|u_\beta-u_\infty\|_\infty &\leq \int\limits_{-d}^d |\omega(x-y)
(f_\beta(u_\beta(y))-f_\infty(u_\infty(y))) dy| \leq \\
& \|\omega\|_{\infty} \left(\int\limits_{-d}^d |f_\beta(u_\beta(y))-f_\beta(u_\infty(y))| dy \right.+
\left.\int\limits_{-d}^d |f_\beta(u_\infty(y))-f_\infty(u_\infty(y))|dy \right)\leq\\
&\|\omega\|_{\infty} \Big( 2d \|f'_\beta\|_\infty \|U_\beta-U_\infty\|_\infty+\|N(\beta,U_\infty)-N(\infty,U_\infty)\|_{L_1([-d,d])} \Big).
\end{split}
\end{equation*}
From Lemma \ref{lemma:2} $N(\beta,U_\infty)$ is continuous in $\beta$ and, thus, $\|N(\beta,U_\infty)-N(\infty,U_\infty)\|_{L_1([-d,d])}\leq \epsilon$ for sufficiently large $\beta>0.$

Thus, we have
\begin{equation*}
\|u_\beta-u_\infty\|_\infty +\|u'_\beta-u'_\infty\|_{C([-d,d])}\leq
\epsilon \Big(1+\|\omega\|_{\infty}  + 2d \|f'_\beta\|_\infty \|\omega\|_\infty \Big).
\end{equation*}

Assigning $\epsilon< \rho/\Big(1+\|\omega\|_{\infty} +2d \|f'_\beta\|_\infty \|\omega\|_\infty \Big)$ we secure that $u_\beta\in K_\rho(u_\infty).$ This completes the proof.
\end{proof}

In the view of the proposition above we can study existence of solutions to  \eqref{eq:H} with $\cH_\beta:K_\rho(u_\infty)\to C^1_{even}(\R)$ by studying existence of the solutions to \eqref{eq:H2} on $B_\varepsilon(U_\infty).$

We make use the following classical result.

 \begin{theorem}[Implicit Function Theorem, e.g., Section 4.7 in \cite{Zeidler:I}] \label{th:IFT}
 Let $\cV$, $\cU$, and $\cW$ be Banach spaces, $(v_0,u_0)\in \cV \times \cU,$ and $\Omega \subset \cV \times \cU$ be a neighbourhood of $(v_0,u_0).$
 Let the operator $P:\Omega \to \cW$ satisfy the following properties
 \begin{itemize}
 \item[(i)] $P(v_0,u_0)=0$, \\
 \item[(ii)]$P$ is continuous at $(v_0,u_0),$ \\
 \item[(iii)] there exist $ \Omega \mapsto P'_u[v,u]$ such that it is continuous in $(v_0,u_0)$, i.e.,
  \begin{equation*}
 \lim \limits_{(v,u)\to (v_0,u_0)} \|P'[v,u]-P'[v_0,u_0]\|_{\cW}=0
 \end{equation*}\\
 \item[(iv)] the operator $P'_u[v_0,u_0]:\cU \to \cW$ is a bounded linear operator with the bounded inverse
 $$ \Gamma=(P'_u[v_0,u_0])^{-1}:\cW \to \cU.$$
 \end{itemize}

 Then the following are true:
 \begin{itemize}
 \item
There exist an operator $F:O\to \cU$, where $O\subset \cV$ is some neighbourhood of $v_0,$ with the following properties
 \begin{itemize}
 \item[(a)] $P(v,Fv)=0$ for all $v \in O$ \\
 \item[(b)] $Fv_0=u_0$\\
 \item[(c)] $F$ is continuous in $v_0$.
 \end{itemize}
Moreover, the operator $F$ is uniquely defined, i.e., if there exists $F_1$ that satisfies (a)-(c) then there is $\varepsilon>0$ such that  $F_1v=F v$ for all $\|v-v_0\|_{\cV}<\epsilon.$\\
\\
\item  The sequence of successive approximations $\{ F_n\}$ defined by $F_0v \equiv  u_0$ and
\begin{equation} \label{eq:Iteration}
F_{n+1}v=F_nv- (P'_u[v_0,u_0])^{-1} \circ P(v, F_{n}v)
\end{equation}
converges to the solution $Fv$ as $n\to \infty$ for all $v \in O.$
\end{itemize}
 \end{theorem}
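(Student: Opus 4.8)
The plan is to reduce the solvability of $P(v,u)=0$ to a fixed-point problem to which the Banach contraction principle applies, exactly along the lines suggested by the iteration \eqref{eq:Iteration}. For $v$ close to $v_0$ I would introduce the map
$$
T_v(u):=u-\Gamma\,P(v,u),
$$
defined on a closed ball $\bar B_r(u_0)\subset\cU$ small enough that $\{v\}\times \bar B_r(u_0)\subset\Omega$. Because $\Gamma$ is a bounded bijection, a point $u\in\bar B_r(u_0)$ satisfies $P(v,u)=0$ if and only if it is a fixed point of $T_v$, so it suffices to show that $T_v$ maps $\bar B_r(u_0)$ into itself as a contraction. Note that $T_{v_0}(u_0)=u_0-\Gamma P(v_0,u_0)=u_0$ by hypothesis (i), so $u_0$ is the anchor fixed point.

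The contraction estimate is the core of the argument. The Fr\'{e}chet derivative of $T_v$ in $u$ is $I-\Gamma\,P_u'[v,u]=\Gamma\bigl(P_u'[v_0,u_0]-P_u'[v,u]\bigr)$, whose norm is at most $\|\Gamma\|\,\|P_u'[v_0,u_0]-P_u'[v,u]\|$. By the continuity hypothesis (iii), I can choose $r>0$ and a neighbourhood $O$ of $v_0$ so small that this quantity stays below $\kappa:=1/2$ for all $(v,u)$ with $v\in O$ and $u\in\bar B_r(u_0)$. Applying the Banach-space mean value inequality to $t\mapsto T_v\bigl(u_2+t(u_1-u_2)\bigr)$, whose derivative in $t$ has norm at most $\kappa\|u_1-u_2\|$, then yields
$$
\|T_v(u_1)-T_v(u_2)\|_{\cU}\le \kappa\,\|u_1-u_2\|_{\cU},\qquad u_1,u_2\in\bar B_r(u_0).
$$
For the self-mapping property I would estimate $\|T_v(u)-u_0\|\le \|T_v(u)-T_v(u_0)\|+\|T_v(u_0)-u_0\|\le \kappa r+\|\Gamma\|\,\|P(v,u_0)\|$; since $\|P(v,u_0)\|\to\|P(v_0,u_0)\|=0$ as $v\to v_0$ by hypotheses (i)--(ii), shrinking $O$ further gives $\|\Gamma\|\,\|P(v,u_0)\|\le(1-\kappa)r$, hence $T_v(\bar B_r(u_0))\subset\bar B_r(u_0)$.

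With these two facts the Banach fixed point theorem furnishes, for each $v\in O$, a unique fixed point $Fv\in\bar B_r(u_0)$, establishing (a); property (b) follows because $u_0$ is the unique fixed point of $T_{v_0}$, and local uniqueness of $F$ is exactly the uniqueness of the fixed point in the ball. For continuity (c) I would write $\|Fv-u_0\|=\|T_v(Fv)-T_{v_0}(u_0)\|\le \kappa\|Fv-u_0\|+\|\Gamma\|\,\|P(v,u_0)\|$ and solve to obtain $\|Fv-u_0\|\le (1-\kappa)^{-1}\|\Gamma\|\,\|P(v,u_0)\|\to 0$ as $v\to v_0$. Finally, the successive approximations \eqref{eq:Iteration} are precisely the Picard iterates $F_{n+1}v=T_v(F_n v)$ started at $F_0v\equiv u_0\in\bar B_r(u_0)$; the self-mapping property keeps them inside the ball and the contraction constant $\kappa$ gives geometric convergence $\|F_nv-Fv\|\le \kappa^n(1-\kappa)^{-1}\|F_1v-F_0v\|\to 0$. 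The main obstacle is the uniform-in-$v$ contraction estimate: one must simultaneously calibrate the radius $r$ in $\cU$ and the neighbourhood $O$ in $\cV$ so that the derivative bound and the self-mapping bound hold together, and it is precisely here that the continuity hypotheses (ii)--(iii) and the invertibility (iv) are all consumed.
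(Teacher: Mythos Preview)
Your proposal is correct and is the standard contraction-mapping proof of the Implicit Function Theorem. Note, however, that the paper does not supply its own proof of this statement: Theorem~\ref{th:IFT} is quoted as a classical result with a reference to Zeidler, and the paper then \emph{applies} it to the operator $P(\beta,U)=-U+H_\beta(U)$ in Section~\ref{sec:Proof}. So there is nothing in the paper to compare your argument against beyond the citation; what you have written is essentially the argument one finds in the cited source, and your identification of the Picard iterates $F_{n+1}v=T_v(F_nv)$ with the successive approximations \eqref{eq:Iteration} is exactly the point the authors exploit when deriving the scheme \eqref{eq:u-appx}.

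One small remark on your write-up: to invoke the mean value inequality for $t\mapsto T_v(u_2+t(u_1-u_2))$ you implicitly use that $P'_u[v,u]$ exists for all $(v,u)$ along the segment, not merely at $(v_0,u_0)$. Hypothesis~(iii) as stated does assert existence of $P'_u$ on all of $\Omega$ (with continuity only at the base point), so this is fine once $r$ and $O$ are chosen so that $\{v\}\times\bar B_r(u_0)\subset\Omega$; it is worth making that inclusion explicit when you fix $r$ and $O$.
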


Define the operator $P(\beta,U): (0,\infty]\to C^{0,\alpha}_{even}([-d,d])$ for some $0<\alpha<1$ as
\begin{equation} \label{eq:op:P}
P(\beta,U)=-U+H_\beta(U).
\end{equation}
Using the notations in Theorem \ref{th:IFT} we have $\cV=\R,$ $\cU=C^1_{even}([-d,d]),$ $\cW=C^{0,\alpha}_{even}([-d,d])$ and $\Omega=(0,\infty]\times B_\varepsilon(U_\infty).$
Though it follows from Lemma \ref{lemma:1} and Lemma \ref{lemma:2} that $(0,\infty]\times B_\epsilon(U_\infty) \mapsto P(\beta,U) \in C^1_{even}([-d,d])$ we were not able to prove the condition (iii) of Theorem \ref{th:IFT} for $C^1_{even}([-d,d])$ but $C^{0,\alpha}_{even}([-d,d]).$ We will comment on it later. \\

We outline the idea of the proof of Theorem \ref{th:1}.
\begin{itemize}
\item[Step 1.] We verify the conditions (i)-(iiv) of Theorem \ref{th:IFT} for the operator $P$ in \eqref{eq:op:P}.\\
\item[Step 2.] We prove the first part of Theorem \ref{th:1}. First, we apply the first part of Theorem \ref{th:IFT} to the operator $P$ to obtain existence and uniqueness of the fixed point $U_\beta \in B_\epsilon (U_\infty)$ of the operator $H_\beta$ for large $\beta>0$ and $\|U_\beta-U_\infty\|_{C^1([-d,d])}\to 0$ as $\beta\to \infty.$ Next, we define $u_\beta=T_2U_\beta$ with $T_2$ given in \eqref{eq:T2} and use Proposition \ref{prop:1} to obtain the results for $u_\beta.$\\
\item[Step 3.] We prove the second part of Theorem \ref{th:1}. Firstly, we validate \eqref{eq:u-appx:b} using Theorem \eqref{th:IFT}. Secondly, we show that $u_n=T_2U_n$ converges to $u_\beta$.\\
\end{itemize}

\subsection*{Step 1}
The condition (i) of Theorem \ref{th:IFT} follows directly for $\beta=\infty$ and $U=U_\infty,$ that is,  $P(\infty,U_\infty)=0.$ The condition (ii) follows from Lemma \ref{lemma:4}. Indeed,
\begin{equation*}
\|H_\beta(U)-H_\infty(U_\infty)\|_{C^1([-d,d])}\leq
\left(\|\omega\|_\infty+\|\omega'\|_\infty\right)\|N(\beta;U)-N(\infty;U_\infty)\|_{L^1([-d,d])}\to 0
\end{equation*}
as $\|U-U_\infty\|_{C^1([-d,d])}\to 0$ and $\beta\to \infty$ due to Lemma \ref{lemma:4} which implies the continuity of the operator $P(\beta;U)$ at $(\infty, U_\infty).$

Next we show Fr\'{e}chet differentiability of the operator $H_\beta$ for $\beta<\infty.$
\begin{lemma}\label{lemma:5:1}
The operator $H_\beta:B_\epsilon(U_\infty)\subset C_{even}^1([-d,d])\to C_{even}^1([-d,d]),$ $\beta<\infty,$ given in \eqref{eq:H2} is Fr\'{e}chet differentiable at $U\in B_\epsilon(U_\infty)$ with the derivative
\begin{equation} \label{eq:S}
H'_\beta(U)=S[\beta,U], \quad S[\beta,U]v=\int\limits_{-d}^d \omega(x-y)f'_\beta(U(y))v(y)dy.
\end{equation}
\end{lemma}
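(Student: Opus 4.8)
The plan is to verify the definition of Fréchet differentiability directly: for fixed $\beta<\infty$ and $U\in B_\epsilon(U_\infty)$, I would show that the candidate linear operator $S[\beta,U]$ defined in \eqref{eq:S} is a bounded linear operator on $C^1_{even}([-d,d])$, and that the remainder
\[
R(v):=H_\beta(U+v)-H_\beta(U)-S[\beta,U]v
\]
satisfies $\|R(v)\|_{C^1([-d,d])}=o(\|v\|_{C^1([-d,d])})$ as $\|v\|_{C^1([-d,d])}\to 0$. First I would check that $S[\beta,U]$ really maps into $C^1_{even}([-d,d])$ and is bounded: since $f'_\beta\in L^\infty(\R)$ by Assumption \ref{ass:f}(5) and $\omega,\omega'\in L^\infty(\R)$ by Assumption \ref{ass:omega}, differentiation under the integral sign (justified exactly as in Lemma \ref{lemma:1}, using $\omega'\in L^1([-d,d])$ and continuity of translations) gives $\|S[\beta,U]v\|_{C^1([-d,d])}\le 2d(\|\omega\|_\infty+\|\omega'\|_\infty)\|f'_\beta\|_\infty\|v\|_\infty$; evenness is inherited because $\omega$ is even and $U$, $v$ are even.

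The core estimate is on the remainder. Writing out the integrands,
\[
R(v)(x)=\int_{-d}^d \omega(x-y)\big(f_\beta(U(y)+v(y))-f_\beta(U(y))-f'_\beta(U(y))v(y)\big)\,dy,
\]
and similarly for $R(v)'(x)$ with $\omega$ replaced by $\omega'$. By Taylor's theorem with integral remainder (or the mean value theorem applied to $f'_\beta$ — here I would want a little more than Lipschitz regularity of $f_\beta$; if only $f_\beta\in C^{0,1}$ is assumed then $f'_\beta$ need not be continuous, so I would instead use the first-order expansion $f_\beta(a+t)-f_\beta(a)=\int_0^1 f'_\beta(a+st)\,t\,ds$ and estimate the pointwise quantity $\int_0^1 (f'_\beta(U(y)+sv(y))-f'_\beta(U(y)))\,ds$). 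The key point is that for fixed $\beta<\infty$, $f'_\beta\in L^\infty$ and, by continuity of $f'_\beta$ (which holds for the smooth sigmoids in the intended examples, e.g. \eqref{eq:f:hill} with $p>1$), $f'_\beta$ is uniformly continuous on the compact interval $[m,M]$ containing the range of all $U+sv$ for $\|v\|_\infty$ small. Hence $\sup_{|r|\le\|v\|_\infty,\,a\in[m,M]}|f'_\beta(a+r)-f'_\beta(a)|=:\eta_\beta(\|v\|_\infty)\to 0$ as $\|v\|_\infty\to 0$, giving the pointwise bound $|f_\beta(U(y)+v(y))-f_\beta(U(y))-f'_\beta(U(y))v(y)|\le \eta_\beta(\|v\|_\infty)\,\|v\|_\infty$. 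Integrating against $|\omega(x-y)|$ and $|\omega'(x-y)|$ and taking suprema in $x$ yields $\|R(v)\|_{C^1([-d,d])}\le 2d(\|\omega\|_\infty+\|\omega'\|_\infty)\,\eta_\beta(\|v\|_\infty)\,\|v\|_\infty=o(\|v\|_{C^1([-d,d])})$, which is the claim.

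I expect the main obstacle to be the regularity bookkeeping on $f_\beta$: Assumption \ref{ass:f}(5) only gives $f_\beta\in C^{0,1}(\R)$ for $\beta<\infty$, so $f'_\beta$ exists only a.e.\ and need not be continuous, whereas the clean remainder estimate above wants uniform continuity of $f'_\beta$ on compacts. The honest fix is either (a) to invoke the implicit standing assumption (used already in stating \eqref{eq:S}) that $f_\beta$ is $C^1$ — indeed the paper's motivating examples \eqref{eq:f:sigmoid}, \eqref{eq:f:hill} are smooth — so that $f'_\beta\in C(\R)$ and the argument goes through verbatim; or (b) to keep only $f_\beta\in C^{0,1}$, in which case one replaces the pointwise bound by the $L^1$-in-$y$ bound $\int_{-d}^d|f_\beta(U+v)-f_\beta(U)-f'_\beta(U)v|\,dy$, controlled via dominated convergence (the integrand is $\le 2\|f'_\beta\|_\infty|v(y)|$ and $\to 0$ a.e.\ at Lebesgue points of $f'_\beta\circ U$ as $\|v\|_\infty\to 0$, using $U'\ne 0$ near the $a_i$ to push forward the measure), at the cost of a slightly softer $o(\cdot)$ argument. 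Either way the boundedness of $S[\beta,U]$ and the structure of $R(v)$ are routine; the only delicate point is passing from smallness in $C^1$-norm to smallness of the nonlinear remainder uniformly in $x$.
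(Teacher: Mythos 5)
Your proposal is correct and follows essentially the same route as the paper: the paper verifies Fr\'{e}chet differentiability by showing that the difference quotients $\bigl(H_\beta(U+tv)-H_\beta(U)\bigr)/t$ converge to $S[\beta,U]v$ in $C^1([-d,d])$ uniformly over $\|v\|_{C^1([-d,d])}<1$ (citing Proposition 4.8(b) of \cite{Zeidler:I}), which reduces to exactly your integral remainder estimate against $|\omega|$ and $|\omega'|$ and is closed by dominated convergence. The regularity issue you flag is genuine --- the paper's proof invokes only $f_\beta\in C^{0,1}(\R)$ from Assumption \ref{ass:f}(5) and asserts a.e.\ convergence of the difference quotients ``uniformly in $v$'', which is precisely the soft spot your fixes (a) and (b) address --- so your more careful treatment of that point is, if anything, an improvement on the published argument.
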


\begin{proof}
Computing the G\^{a}teaux derivative of $H_\beta$ at $U\in B_\epsilon(U_\infty)$ we obtain $dH_\beta(U;v)=S[\beta,U]v,$ with $S[\beta,U]$ given in \eqref{eq:S}. The operator $H_\beta$ is Fr\'{e}chet differentiable
if
$$
\lim \limits_{t\to 0} \left\| \dfrac{H_\beta(U+tv)-H_\beta(U)}{t} -S[\beta,U]v \right\|_{C^1([-d,d])}=0
$$
uniformly for all $\|v\|_{C^1([-d,d])} <1,$ see [Proposition 4.8 (b), \cite{Zeidler:I}].

We obtain the estimate
\begin{equation} \label{eq:H_beta:Gauteaux}
\begin{split}
&\left\| \dfrac{H_\beta(U+tv)-H_\beta(U)}{t} -S[\beta,U]v \right\|_{C^1([-d,d])} \leq\\
&\Big( \|\omega\|_\infty+\|\omega'\|_\infty \Big) \int\limits_{-d}^{d}\left| \dfrac{f_\beta(U(y)+tv(y))-f_\beta(U(y))}{t}-f'_\beta(U(y))v(y)\right|dy.
\end{split}
\end{equation}

From $f_\beta \in C^{0,1}(\R),$ see Assumption \ref{ass:f}(5),
$$\dfrac{f_\beta(U(y)+tv(y))-f_\beta(U(y))}{t} \to f'_\beta(U(y))v(y)$$ almost everywhere on $[-d,d]$ uniformly in $v$ on $\{v:\, \|v\|_{C^1([-d,d])}<1\}$, and
$$\left|f_\beta(U(y)+tv(y))-f_\beta(U(y))\right|/|t| \leq \|f'_\beta\|_\infty \|v\|_{C^1([-d,d])}.$$
Thus, by the Lebesgue Dominated Convergence theorem the integral in \eqref{eq:H_beta:Gauteaux} converges to zero uniformly.
\end{proof}

To show the Fr\'{e}chet differentiability of the operator $H_\infty$ we must proceed in a different way due to the discontinuity of $f_\infty.$
\begin{lemma}\label{lemma:5:2}
The operator $H_\infty:B_\epsilon(U_\infty)\subset C^1_{even}([-d,d])\to C^{0,1}_{even}([-d,d])$ given in \eqref{eq:H2} is Fr\'{e}chet differentiable at $U\in B_\epsilon(U_\infty)$ with the derivative

\begin{equation} \label{eq:S:inf}
H'_\infty(U)=S[\infty,U], \quad S[\infty,U]v=\sum\limits_{i=1}^N \dfrac{\omega(x-b_i)+\omega(x+b_i)}{|u'_\infty(a_i)|}.
\end{equation}
where $b_i \in (a_i^-(\epsilon), a_i^+(\epsilon)),$ $i=1,...,N,$ are the positive solutions to $u(x)=h.$
\end{lemma}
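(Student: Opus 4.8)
The plan is to bypass the discontinuity of $f_\infty$ by exploiting the bump geometry: $H_\infty U$ sees $U$ only through its superlevel set $\{U\ge h\}$, which for $U$ near $U_\infty$ is a finite union of intervals, so $H_\infty U$ is a finite sum of shifted copies of the primitive $W$, and perturbing $U$ merely wiggles the finitely many (transversal) endpoints. Fix $U\in B_\epsilon(U_\infty)$. By Remark~\ref{rem:3}, $U$ meets the level $h$ transversally at exactly the $2N$ points $\pm b_1,\dots,\pm b_N$ with $b_i\in(a_i^{-}(\epsilon),a_i^{+}(\epsilon))$, $|U'|>\epsilon$ on those intervals, and $|U-h|>\epsilon$ off them. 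Using $\int_l^r\omega(x-y)\,dy=W(x-l)-W(x-r)$ for the constituent intervals of $\{U\ge h\}\cap[-d,d]$, and that a left endpoint of this set has $U'>0$ and a right endpoint $U'<0$, one obtains $H_\infty U(x)=\sum_{i=1}^N \sigma_i\bigl(W(x-b_i)-W(x+b_i)\bigr)$ with $\sigma_i:=\sign U'(b_i)\in\{\pm1\}$, the pairing of $b_i$ with $-b_i$ (and $\sigma_{-b_i}=-\sigma_i$) being forced by the evenness of $U$. For every $v$ with $\|v\|_{C^1([-d,d])}\le1$ and all $|t|<t_0=t_0(\epsilon)$, independent of $v$, the same description holds for $U+tv$: although it need not lie in $B_\epsilon(U_\infty)$, by the margins in Remark~\ref{rem:3} it still has exactly one transversal crossing $b_i(t)$ in each $(a_i^{-}(\epsilon),a_i^{+}(\epsilon))$, with the same signs $\sigma_i$, and $b_i(0)=b_i$.

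Next I would track the endpoints. The scalar equation $U(y)+tv(y)=h$ has near $b_i$ a unique branch $y=b_i(t)$ with $\dot b_i(0)=-v(b_i)/U'(b_i)$; since $|U'|>\epsilon$ on $(a_i^{-}(\epsilon),a_i^{+}(\epsilon))$ one gets $|b_i(t)-b_i|\le|t|/\epsilon$, and the mean value theorem together with the ($v$-independent) modulus of continuity of $U'$ on $[-d,d]$ and $\|v\|_{C^1}\le1$ gives $b_i(t)-b_i-t\dot b_i(0)=o(t)$ uniformly in $v$. Passing to the limit in the finite sum (recall $W'=\omega$, $W\in C^1$) identifies the G\^{a}teaux derivative $dH_\infty(U;v)(x)=\sum_i\sigma_i\bigl(-\dot b_i(0)\bigr)\bigl(\omega(x-b_i)+\omega(x+b_i)\bigr)=\sum_{i=1}^N\frac{\omega(x-b_i)+\omega(x+b_i)}{|U'(b_i)|}\,v(b_i)$, which is the asserted $S[\infty,U]v$ (at $U=U_\infty$ the denominator is $|u_\infty'(a_i)|$ and this is the vector $\mathbf s(x)$ of Theorem~\ref{th:1}). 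This $S[\infty,U]$ is clearly linear in $v$, even in $x$ (the $\pm b_i$ come in pairs and $\omega$ is even), and bounded from $C^1_{even}([-d,d])$ into $C^{0,\alpha}_{even}([-d,d])$ with norm $\le2N\|\omega\|_{C^{0,1}}/\epsilon$, since $\omega\in C^{0,1}(\R)$ by Assumption~\ref{ass:omega}(2).

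It remains to upgrade G\^{a}teaux to Fr\'{e}chet, for which it suffices — by the criterion used for Lemma~\ref{lemma:5:1} (e.g.\ [Prop.~4.8(b), \cite{Zeidler:I}]) — to prove $\bigl\|t^{-1}\bigl(H_\infty(U+tv)-H_\infty U\bigr)-S[\infty,U]v\bigr\|_{C^{0,\alpha}([-d,d])}\to0$ as $t\to0$, uniformly in $\|v\|_{C^1}\le1$. This error is a finite sum of terms $E(x)=W(x+s(t))-W(x)-t\dot s(0)\,\omega(x)$ with $s(t)=\pm(b_i(t)-b_i)$. Writing $W(x+s(t))-W(x)=\int_0^{s(t)}\omega(x+\sigma)\,d\sigma$ and splitting off $\omega(x)$, then using $|\omega(x+\sigma)-\omega(x)|\le\|\omega'\|_\infty|\sigma|$, $|s(t)|\le|t|/\epsilon$ and $s(t)-t\dot s(0)=o(t)$, one obtains $\|E\|_{C([-d,d])}=O(t^2)+o(t)=o(t)$; an $x$-differentiation yields, however, only the weaker $[E]_{C^{0,1}}=O(t)$, because the leftover $\int_0^{s(t)}\bigl(\omega'(x+\sigma)-\omega'(x)\bigr)\,d\sigma$ is merely $O(|s(t)|)$ (the a.e.\ derivative $\omega'$ need not be continuous). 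Interpolating, $[E]_{C^{0,\alpha}}\le\bigl([E]_{C^{0,1}}\bigr)^{\alpha}\bigl(2\|E\|_{C}\bigr)^{1-\alpha}=O(t^{\alpha})\,o(t^{1-\alpha})=o(t)$ for every $\alpha\in(0,1)$, uniformly in $v$; summing over the finitely many endpoints completes the proof.

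The only genuinely non-routine point is this last interplay: the discontinuity of $f_\infty$ forces the whole argument through the level-set geometry and a scalar implicit-function estimate made uniform over the unit ball of directions $v$, and because $\omega$ is merely Lipschitz the remainder is $o(t)$ in the sup norm but only $O(t)$ in the Lipschitz seminorm — which is why the natural range is the H\"{o}lder space $C^{0,\alpha}_{even}([-d,d])$ with $\alpha<1$ (precisely the space used for the Implicit Function Theorem in Section~\ref{sec:Proof}) rather than $C^{0,1}$.
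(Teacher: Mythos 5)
Your proof is correct and reaches the same G\^{a}teaux derivative by essentially the same mechanism as the paper --- both arguments reduce everything to tracking the $2N$ transversal level crossings and differentiating their positions via the scalar equation $U(y)+tv(y)=h$ (the paper works with the symmetric-difference sets $\cO^{\pm}(t)$ and the mean value theorem; you work with the explicit representation $H_\infty U=\sum_i\sigma_i\bigl(W(\cdot-b_i)-W(\cdot+b_i)\bigr)$, which amounts to the same computation). Where you genuinely diverge, and improve on the paper, is the upgrade from G\^{a}teaux to Fr\'{e}chet: the paper simply asserts that $dH_\infty(U;\cdot)$ is continuous in $U$ and cites Proposition 4.8(c) of \cite{Zeidler:I} --- a step that implicitly relies on the operator-norm continuity of $U\mapsto S[\infty,U]$, which is only established later (and only at $U_\infty$) in Proposition \ref{prop:conv} --- whereas you verify the defining remainder estimate directly and uniformly over the unit ball of directions, via the split $\|E\|_{C}=o(t)$, $[E]_{C^{0,1}}=O(t)$ and H\"{o}lder interpolation. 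This buys two things. First, a self-contained proof. Second, a sharper diagnosis of the target space: since $\omega$ is only Lipschitz (e.g.\ \eqref{eq:omega:exp} has a kink), the remainder is genuinely of order $t$, not $o(t)$, in the Lipschitz seminorm, so Fr\'{e}chet differentiability holds into $C^{0,\alpha}_{even}([-d,d])$ for $\alpha<1$ but can fail into $C^{0,1}_{even}([-d,d])$ as literally claimed in the lemma; this is exactly the space the paper actually uses for the Implicit Function Theorem, and it shows the restriction to $\alpha<1$ (Remark \ref{rem:H-space vs Lip}) is already forced at the level of differentiability, not only at the level of continuity of the derivative. Note also that your derived formula $S[\infty,U]v=\sum_i\frac{\omega(x-b_i)+\omega(x+b_i)}{|U'(b_i)|}\,v(b_i)$ is the correct one: the displayed formula \eqref{eq:S:inf} omits the factor $v(b_i)$ and writes $|u_\infty'(a_i)|$ where $|U'(b_i)|$ is meant (these coincide only at $U=U_\infty$), a typo your derivation silently repairs.
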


\begin{proof}
We start by calculating the G\^{a}teaux derivative of $H_\infty$ at $U\in B_\epsilon(U_\infty).$
Consider
\begin{equation} \label{eq:H_inf:Gauteaux}
 H_\infty(U+tv)-H_\infty(U)=\int\limits_{\cO^+(t)} \omega(x-y)dy-\int\limits_{\cO^-(t)}\omega(x-y)dy
\end{equation}
where
$$
\cO^+(t)=\{y:\, U(y)\leq h \mbox{ and } U(y)+tv(y)\geq h\}
$$
 and
 $$
\cO^-(t)=\{y:\, U(y)\geq h \mbox{ and } U(y)+tv(y)\leq h\}.
$$
 Since $U\in B_\epsilon(U_\infty)$ is a restriction of a regular $(h; -b_N,\ldots,-b_1,b_1,\ldots,b_N)$ bump on $[-d,d]$ it is clear that $\pm b_i$ for all $i=1,\ldots,N,$ belongs either to $\cO^+(t)$ or $\cO^-(t).$
We remind here that $U$ and $v$ are even. Thus $\cO^\pm(t)$ are symmetric and without loss of generality we consider only $y\geq 0.$

Let $b_i,$ $i \in \{1,\ldots,N\}$ belong to either $\cO^-(t)$ or $\cO^+(t)$ and $y\not=b_i$ be a limiting point of this set, and hence $U(y)+tv(y)=h.$ By the mean value theorem
$$
y=b_i -\dfrac{tv(b_i)}{U'(\xi)+tv(\xi)}
$$
where $\xi$ lies in between of $y$ and $b_i.$ Then we get
$$
\lim \limits_{t\to 0} \dfrac{|y-b_i|}{t}=\lim\limits_{t\to 0} \dfrac{|t|}{t} \dfrac{|v(b_i)|}{|U'(b_i)|}.
$$
If $b_i \in \cO^+(t)$ then either $v(b_i)<0,$ $t>0$ or  $v(b_i)>0,$ $t<0$ and therefore
$$
\lim \limits_{t\to 0} \dfrac{|y-b_i|}{t}=\dfrac{v(b_i)}{|U'(b_i)|}, \quad b_i \in \cO^+(t).
$$
Similarly, for $b_i\in \cO^-(t)$ we conclude that
$$
\lim \limits_{t\to 0} \dfrac{|y-b_i|}{t}=-\dfrac{v(b_i)}{|U'(b_i)|}, \quad b_i \in \cO^-(t).
$$

Making use of \eqref{eq:H_inf:Gauteaux} and limits above we obtain
the G\^{a}teaux derivative of $H_\infty$ at $U\in B_\epsilon(U_\infty),$

\begin{equation*}
\begin{split}
dH_\infty(U;v)=&\lim \limits_{t\to 0} \dfrac{H_\infty(U_\infty+tv)-H_\infty(U)}{t}=\\
&\sum\limits_{\pm b_i \in \cO^+(t)\cup \cO^-(t)}\dfrac{\omega(\pm b_i-x)}{|U'(b_i)|} v(a_i)=\\
&\sum\limits_{i=1}^N \dfrac{\omega(b_i-x)+\omega(b_i+x)}{|U'(b_i)|} v(a_i)=S[\infty,U]v.
\end{split}
\end{equation*}

As $dH_\infty(U;v)$ is continuous at $U$ for any $U\in B_\epsilon(U_\infty)$ we conclude that $H_\infty$ is the Fr\'{e}chet differentiable at $U$ with $H'_\infty(U)v=dH_\infty(U;v),$ see Proposition 4.8(c) in \cite{Zeidler:I}.
\end{proof}

\begin{remark}
When $U=U_\infty$ the derivative $H'_\infty(U_\infty)$ is given as
\begin{equation} \label{eq:S:inf:2}
H'_\infty(U_\infty)=S[\infty,U_\infty], \quad S[\infty,U_\infty]v=\sum\limits_{i=1}^N \dfrac{\omega(x-a_i)+\omega(x+a_i)}{|u'_\infty(a_i)|}.
\end{equation}
\end{remark}

From Lemma \ref{lemma:5:1} and Lemma \ref{lemma:5:2} the Fr\'{e}chet derivative with respect to the second variable  at $(\beta,U)$ exists and is given as $P'_U[\beta,U]=I-S[\beta,U]$ with $S[\beta,U]$ given in \eqref{eq:S} for $\beta<\infty$ and \eqref{eq:S:inf} for $\beta=\infty.$

Now we turn to the proof of the norm convergence of $P'_U[\beta,U],$ see (iv) in Theorem \ref{th:IFT}.
As
$$
\|P'_U[\beta,U]-P'_U[\infty,U_\infty]\|_{C^{0,\alpha}([-d,d])} \leq \|U-U_\infty\|_{C^{0,\alpha}}+
\|S[\beta,U]-S[\infty,U_\infty]\|_{C^{0,\alpha}([-d,d])},
$$
it suffices to show that $\|S[\beta,U]-S[\infty,U_\infty]\|_{C^{0,\alpha}([-d,d])}$ as $(\beta,U)\to (\infty,U_\infty).$ Before we prove this operator convergence we need the following lemma.
\begin{lemma}\label{lemma:6}
Let $O(a_i) \subset (a_i^-(\epsilon),a^+(\epsilon)),$ $i=1,\ldots,N,$ be an open neighbourhood of $a_i(\epsilon)$ and $\cO$ non empty subset of $[0,d]/\bigcup_{i=1}^N O(a_i).$  Then for any $\varphi \in C([-d,d])$ we have

\begin{equation} \label{eq:conv:1}
\int\limits_{\cO} f'_{\beta_n}(U_n(x))\varphi(x)dx \to 0
\end{equation}
and
\begin{equation} \label{eq:conv:2}
\int\limits_{O(a_i)} f'_{\beta_n}(U_n(x))\varphi(x)dx \to \dfrac{\varphi(a_i)}{|u_\infty'(a_i)|}
\end{equation}
for $\beta_n\to \infty$ and $U_n \to U_\infty$ in the $C^1([-d,d])$ norm as $n\to \infty.$
\end{lemma}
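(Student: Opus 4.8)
The plan is to exploit the fact that, as $\beta_n\to\infty$, the derivatives $f'_{\beta_n}$ form an approximate identity concentrated at $h$ from the right, and that the two integrals are obtained by pulling this concentration back through the locally invertible maps $U_n$. I would begin with a preliminary claim collecting the relevant properties of $f'_{\beta_n}$: from Assumption \ref{ass:f}(1),(2),(5) one has $f'_{\beta_n}\ge 0$ a.e., $f'_{\beta_n}\equiv 0$ on $(-\infty,h)$ (since $\supp f_{\beta_n}\subset[h,\infty)$), $\int_\R f'_{\beta_n}(t)\,dt=\lim_{T\to\infty}f_{\beta_n}(T)\to 1$, and, crucially, $\int_{h+\xi}^{\infty}f'_{\beta_n}(t)\,dt=\lim_{T\to\infty}f_{\beta_n}(T)-f_{\beta_n}(h+\xi)\to 0$ for every $\xi>0$, the last fact coming from \eqref{eq:f:uniformly}. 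Consequently, whenever $\psi_n\to\psi_\infty$ uniformly on a neighbourhood of $h$ with $\{\psi_n\}$ equicontinuous and uniformly bounded there, $\int_\R f'_{\beta_n}(t)\psi_n(t)\,dt\to\psi_\infty(h)$; splitting the integral at $h+\xi$, the tail is controlled by $\|\psi_n\|_\infty\int_{h+\xi}^\infty f'_{\beta_n}$ and the bulk by equicontinuity. Both \eqref{eq:conv:1} and \eqref{eq:conv:2} will be reduced to this claim.

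For \eqref{eq:conv:2}: on $O(a_i)\subset(a_i^-(\epsilon),a_i^+(\epsilon))$, Remark \ref{rem:3} gives $|U_n'|>\epsilon$, so $U_n$ is a strictly monotone $C^1$-diffeomorphism of $O(a_i)=(\alpha_i,\beta_i)$ onto its image, with inverse $k_i^{(n)}$ obeying $|(k_i^{(n)})'|\le 1/\epsilon$. The substitution $t=U_n(x)$ turns the left-hand side of \eqref{eq:conv:2} into $\int f'_{\beta_n}(t)\psi_n(t)\,dt$ over the interval with endpoints $U_n(\alpha_i),U_n(\beta_i)$, where $\psi_n(t)=\varphi(k_i^{(n)}(t))\big/|U_n'(k_i^{(n)}(t))|$; the orientation bookkeeping produces $|U_n'|$ in the denominator whether $u'_\infty(a_i)$ is positive or negative. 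Since $U_n\to U_\infty$ in $C^1([-d,d])$ and $|U_n'|>\epsilon$ on $(a_i^-(\epsilon),a_i^+(\epsilon))$, the branches $k_i^{(n)}$ converge uniformly with equicontinuous, uniformly bounded derivatives, the roots $b_i^{(n)}=k_i^{(n)}(h)\to a_i$, and $U_n(\alpha_i),U_n(\beta_i)$ converge to values that strictly straddle $h$ (because $\alpha_i<a_i<\beta_i$ and $U_\infty$ is strictly monotone near $a_i$ with $U_\infty(a_i)=h$). Hence for large $n$ the interval of integration contains a fixed neighbourhood of $h$, $\{\psi_n\}$ is equicontinuous there, and $\psi_n\to\psi_\infty$ with $\psi_\infty(h)=\varphi(a_i)/|u'_\infty(a_i)|$; the preliminary claim then gives \eqref{eq:conv:2}.

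For \eqref{eq:conv:1}: I would decompose $\cO=\cO_{\mathrm{far}}\cup\bigcup_i\cO_i$, where $\cO_{\mathrm{far}}=\cO\cap\big([0,d]\setminus\bigcup_j(a_j^-(\epsilon),a_j^+(\epsilon))\big)$ and $\cO_i=\cO\cap\big((a_i^-(\epsilon),a_i^+(\epsilon))\setminus O(a_i)\big)$. On $\cO_{\mathrm{far}}$, Remark \ref{rem:3} gives $|U_n-h|>\epsilon$; where $U_n<h$ the integrand vanishes identically (since $f_{\beta_n}\equiv 0$ there), and where $U_n>h+\epsilon$ the bound $|f'_{\beta_n}(U_n(x))|\le C_{\beta_n}(\epsilon)$ from Assumption \ref{ass:f}(5) yields a contribution at most $2d\,\|\varphi\|_\infty\,C_{\beta_n}(\epsilon)\to 0$. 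On each $\cO_i$, $U_n$ is again a strictly monotone diffeomorphism, and since $b_i^{(n)}\to a_i$ lies in the interior of $O(a_i)$ for large $n$, the mean value theorem gives $|U_n-h|\ge\epsilon\,\dist(a_i,\partial O(a_i))/2>0$ on $\cO_i$; substituting $t=U_n(x)$ and using $f'_{\beta_n}\equiv 0$ below $h$ together with $\int_{h+\xi}^\infty f'_{\beta_n}\to 0$ makes this contribution vanish as well. Summing the three types of terms gives \eqref{eq:conv:1}.

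I expect the only delicate point to be the estimate on $\cO_{\mathrm{far}}\cap\{U_n>h+\epsilon\}$ in \eqref{eq:conv:1}: there $U_n$ need not be monotone, so one cannot change variables and must bound the composition $f'_{\beta_n}\circ U_n$ pointwise; with only $f_{\beta_n}\in C^{0,1}(\R)$ available (Assumption \ref{ass:f}(5)), the inequality $|f'_{\beta_n}(U_n(x))|\le C_{\beta_n}(\epsilon)$ requires a short argument ruling out that $U_n$ carries positive Lebesgue measure onto the null set where a chosen representative of $f'_{\beta_n}$ overshoots its essential supremum — which is automatic when $f_{\beta_n}\in C^1$ (as for the Hill nonlinearity \eqref{eq:f:hill} with $p>1$) or whenever $U_n$ has no intervals of constancy (as for the smooth kernels $\omega$ used in the applications). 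The remaining ingredients — the $C^1$-convergence of the inverse branches $k_i^{(n)}$, the equicontinuity of $\{\psi_n\}$, and the matching of the moving endpoints $U_n(\partial O(a_i))$ with the support of $f'_{\beta_n}$ — are routine consequences of $U_n\to U_\infty$ in $C^1([-d,d])$ and $|U_n'|>\epsilon$ on $(a_i^-(\epsilon),a_i^+(\epsilon))$.
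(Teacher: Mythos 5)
Your proof is correct and follows essentially the same route as the paper's: both rest on the change of variables $t=U_n(x)$ via the local inverses $k_n$ (with $\|k_n-k_\infty\|_{C^1}\to 0$) together with the concentration of the unit mass of $f'_{\beta_n}$ at $h$, and on the bound $C_{\beta_n}(\xi)\to 0$ away from the points $a_i$. Your explicit approximate-identity claim ($\int f'_{\beta_n}\to 1$ with vanishing tails) is a cleaner packaging of what the paper achieves through its $I_1,I_2$ splitting and the ``$o(\eta)\equiv 0$'' bootstrap, and your aside on the a.e.-defined composition $f'_{\beta_n}\circ U_n$ flags a measure-theoretic point the paper passes over silently.
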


\begin{proof}

We first prove \eqref{eq:conv:1}. Let $\eta<\epsilon$ be such that $[a^-_i(\eta),a_i^+(\eta)] \subset O(a_i),$ $i=1,\ldots,N.$ As $n\to \infty$  we have $U_n\in B_\eta(U_\infty)$ and thus $|U_n(x)-h|>\eta$ on the set $D_\eta:=[0,d]/\bigcup_{i=1}^N [a_i^-(\eta),a_i^+(\eta)],$ see Remark \ref{rem:3}. From Assumption \ref{ass:f}(5) and \ref{eq:f:uniformly} we have $|f'_{\beta_n}(U_n(x))|\leq C_\eta(\beta_n) \to 0$ as $n\to \infty.$ Hence, we conclude that
$$ \left|\int\limits_{\cO} f'_{\beta_n}(U_n(x))\varphi(x)dx\right|\leq
\|\varphi\|_{L^\infty([-d,d])}\int\limits_{D_\eta} |f'_{\beta_n}(U_n(x))|dx \leq d\, \|\varphi\|_{L^\infty([-d,d])} C_\eta(\beta_n)\to 0.$$

Now we turn our attention to proving \eqref{eq:conv:2}. Due to \eqref{eq:conv:1} we can without loss of generality assume that $O(a_i)=(a_i^-(\epsilon),a_i^+(\epsilon)),$ $i=1,\ldots,N.$ Let us fix some $i=1,\ldots,N.$ Due to \cite{Barvinek:1991}, $U_\infty$ and $U_n$ have the inverse functions $k_\infty(t)$ and $k_n(t),$ respectively, defined on $[h-\epsilon, h+\epsilon].$  Moreover, by the Implicit Function Theorem $\|k_n-k_\infty\|_{C^1([h-\epsilon,h+\epsilon])} \to 0$ as $n\to \infty.$ Using a change of variables and \eqref{eq:conv:1} we obtain

\begin{equation} \label{eq:conv:2:1}
\int\limits_{O(a_i)} f'_{\beta_n}(U_n(x))\varphi(x)dx -\int\limits_{h-\epsilon}^{h+\epsilon} f'_{\beta_n}(t)|k'_n(t)| \varphi(k_n(t))dt \to 0, \quad n\to \infty.
\end{equation}

We note that for any positive $\eta<\epsilon$
\begin{equation}\label{eq:conv:aux}
\int\limits_{h-\eta}^{h+\eta} f'_{\beta_n}(t)|k'_\infty(h)| \varphi(k_\infty(h))dt
=\dfrac{\varphi(a_i)}{|u'_\infty(a_i)|}
\end{equation}
and
\begin{equation}\label{eq:I(eta;n)}
I(\eta,n)=\int\limits_{h-\eta}^{h+\eta} f'_{\beta_n}(t)\Big(|k'_n(t)| \varphi(k_n(t))-k'_\infty(h)\varphi(k_\infty(h))\Big)dt.
\end{equation}

Let $I(\eta,n)=I_1(\eta,n)+I_2(\eta,n)$ where
$$I_1(\eta,n)=\int\limits_{h-\epsilon}^{h+\epsilon} f'_{\beta_n}(t)\Big(|k'_n(t)| \varphi(k_n(t))-|k'_\infty(t)| \varphi(k_\infty(t))\Big)dt$$
and
$$I_2(\eta,n)=\int\limits_{h-\epsilon}^{h+\epsilon} f'_{\beta_n}(t)\Big(|k'_\infty(t)| \varphi(k_\infty(t))-|k'_\infty(h)| \varphi(k_\infty(h))\Big)dt.$$

We observe that $|I_1(\eta,n)|\to 0$ and  $|I_2(\eta,n)|\to o(\eta)$  as $n\to \infty$ since

$$|I_1(\eta,n)| \leq \max \limits_{t\in [h-\eta,h+\eta]} \Big( \varphi(k_n(t)) |k'_n(t)|-\varphi(k_\infty(t)) |k'_\infty(t)|\Big)\to 0$$
due to  $\|k_n - k_\infty\|_{C^1([-d,d])} \to 0$  and
$$|I_2(\eta,n)| \leq \max \limits_{t\in [h-\eta,h+\eta]} \Big( \varphi(k_\infty(t)) |k'_\infty(t)|-\varphi(k_\infty(h)) |k'_\infty(h)|\Big)\to o(\eta).$$

Thus, we obtain
$$
\lim \limits_{n\to 0}|I(\eta,n)|\leq \lim \limits_{n\to 0}|I_1(\eta,n)|+\lim \limits_{n\to 0}|I_2(\eta,n)| =o(\eta)
$$
that combined with \eqref{eq:I(eta;n)} and \eqref{eq:conv:aux} results in

\begin{equation} \label{eq:conv:2:2}
\lim \limits_{n\to 0}\int \limits_{h-\eta}^{h+\eta} f'_{\beta_n}(t)|k'_n(t)| \varphi(k_n(t))dt = \dfrac{\varphi(a_i)}{|u'_\infty(a_i)|}+ o(\eta).
\end{equation}

From \eqref{eq:conv:1} we get
$$
\int \limits_{a_i^-(\epsilon)}^{a_i^+(\epsilon)} f'_{\beta_n}(U_n(x))\varphi(x)dx-\int \limits_{a_i^-(\eta)}^{a_i^+(\eta)} f'_{\beta_n}(U_n(x))\varphi(x)dx \to 0, \quad n\to \infty,
$$
 independently on $\eta.$ This observation and \eqref{eq:conv:2:1} yields $o(\eta)\equiv 0$ in \eqref{eq:conv:2:2} which completes the proof.
\end{proof}

\begin{proposition} \label{prop:conv}
Let Assumption \ref{ass:f}, Assumption \ref{ass:omega} and Assumption \ref{ass:u:infty} be satisfied and
$U_\infty=T_1u_\infty$ be a restriction of $u_\infty$ on $[-d,d].$. Then the map  $(0,\infty] \times B_\epsilon(U_\infty)\mapsto S[\beta,U]$ is continuous at $(\infty, U_\infty),$ that is,
$$\|S[\beta,U] - S[\infty, U_\infty]\|_{C^{0,\alpha}([-d,d])}\to 0$$
as $\beta \to \infty$ and $\|U-U_\infty\|_{C^1([-d,d])}\to 0.$
\end{proposition}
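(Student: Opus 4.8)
The goal is to estimate the $C^{0,\alpha}([-d,d])$-distance between the operators $S[\beta,U]$ and $S[\infty,U_\infty]$, where the former is an integral operator with kernel $\omega(x-y)f'_\beta(U(y))$ and the latter is the rank-$N$ operator $v\mapsto\sum_i v(a_i)(\omega(x-a_i)+\omega(x+a_i))/|u'_\infty(a_i)|$. The natural approach is to test against an arbitrary $v$ with $\|v\|_{C^1([-d,d])}\le 1$ and bound $\|(S[\beta,U]-S[\infty,U_\infty])v\|_{C^{0,\alpha}([-d,d])}$ uniformly in $v$. Since $\omega\in C^{0,1}(\R)\subset C^{0,\alpha}(\R)$ with $\omega$-dependent constants, the $C^{0,\alpha}$-norm of the difference, as a function of $x$, is controlled by $\|\omega\|_{C^{0,\alpha}}$ times the total-variation-type quantity $\int_{-d}^d f'_\beta(U(y))v(y)\,dy$ localized appropriately, plus the discrepancy coming from replacing this integral by the point evaluations at $a_i$. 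So the whole estimate reduces to a scalar convergence statement about the measures $f'_\beta(U(y))\,dy$ converging weakly to $\sum_i \delta_{a_i}/|u'_\infty(a_i)|$, tested against continuous functions — which is exactly the content of Lemma \ref{lemma:6}.

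Concretely, I would proceed as follows. First, fix disjoint open neighbourhoods $O(a_i)\subset(a_i^-(\epsilon),a_i^+(\epsilon))$ of the $a_i$ and split $[0,d]=\big(\bigcup_i O(a_i)\big)\cup\cO$ with $\cO$ the complement. Using symmetry (all functions are even, so one may integrate over $[0,d]$ and fold), write
\begin{equation*}
(S[\beta,U]v)(x)=\sum_{i=1}^N\int_{O(a_i)}f'_\beta(U(y))v(y)\big(\omega(x-y)+\omega(x+y)\big)dy+\int_{\cO}f'_\beta(U(y))v(y)\big(\omega(x-y)+\omega(x+y)\big)dy.
\end{equation*}
The $\cO$-integral tends to $0$ in $C^{0,\alpha}([-d,d])$, uniformly in $\|v\|_{C^1}\le 1$, by applying \eqref{eq:conv:1} with $\varphi(y)=v(y)(\omega(x-y)+\omega(x+y))$ — here one needs the convergence to be uniform in the parameter $x$ and in the Hölder seminorm, which follows because the bound in Lemma \ref{lemma:6}'s proof is $d\,\|\varphi\|_{L^\infty}C_\eta(\beta_n)$ and $\|\omega(x-\cdot)\|_\infty$, as well as the Hölder constant in $x$, are bounded by $\|\omega\|_{C^{0,1}}$ uniformly. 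For each $O(a_i)$-integral, apply \eqref{eq:conv:2} with the same $\varphi$; the limit is $v(a_i)(\omega(x-a_i)+\omega(x+a_i))/|u'_\infty(a_i)|$, which is precisely the $i$-th term of $S[\infty,U_\infty]v$. Summing over $i$ and over the $\pm$ folding recovers \eqref{eq:S:inf:2}. Finally, the extra term $\|U-U_\infty\|_{C^{0,\alpha}}\le\|U-U_\infty\|_{C^1}\to 0$ handles the $-U$ piece of $P'_U$ mentioned just before the proposition.

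The technical heart — and the step I expect to be the main obstacle — is upgrading the pointwise-in-$x$ convergence furnished by Lemma \ref{lemma:6} to convergence in the Hölder norm $C^{0,\alpha}$, uniformly over the unit ball of $v$ in $C^1$. For the sup-norm part this is straightforward since $\omega$ is Lipschitz and the kernels are equi-bounded. For the Hölder seminorm one must control $\big|(S[\beta,U]v-S[\infty,U_\infty]v)(x)-(S[\beta,U]v-S[\infty,U_\infty]v)(x')\big|/|x-x'|^\alpha$: the idea is to use $|\omega(x-y)-\omega(x'-y)|\le\min\{2\|\omega\|_\infty,\ \mathrm{Lip}(\omega)|x-x'|\}\le (2\|\omega\|_\infty)^{1-\alpha}\mathrm{Lip}(\omega)^\alpha|x-x'|^\alpha$, so that the difference kernel is itself $\alpha$-Hölder in $x$ with a constant independent of $y$, and then run the same Lemma \ref{lemma:6} argument with $\varphi$ replaced by the $\alpha$-Hölder-difference-quotient kernel — this requires noting that the bounds in Lemma \ref{lemma:6} depend on $\varphi$ only through $\|\varphi\|_{L^\infty}$ (for \eqref{eq:conv:1}) and through $\|\varphi\|_{C([-d,d])}$ near $a_i$ (for \eqref{eq:conv:2}), both of which are uniformly controlled. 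This is precisely the reason the authors were forced to work in $C^{0,\alpha}$ rather than $C^1$: in the $C^1$ norm the difference of kernels is $O(1)$, not $o(1)$, so no such estimate is available. I would also double-check that $O(\eta)\equiv 0$ in \eqref{eq:conv:2:2}, which Lemma \ref{lemma:6} already established, so that no $\eta$-error survives the limit.
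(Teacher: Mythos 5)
Your plan is correct and follows essentially the same route as the paper: both proofs reduce the operator convergence to the weak-$*$ convergence of the measures $f'_{\beta_n}(U_n(y))\,dy$ to $\sum_i\delta_{a_i}/|u'_\infty(a_i)|$ furnished by Lemma \ref{lemma:6}, and both control the H\"{o}lder seminorm with the interpolation $\min\{2L|y-a_i|,\,2L|x-z|\}\le 2L\,|y-a_i|^{1-\alpha}|x-z|^{\alpha}$, which is exactly why $C^{0,\alpha}$ with $\alpha<1$ is needed. The one organizational difference worth noting: the paper first freezes the kernel at $y=a_i$ (splitting $S_i^{+}(n)v-S_i^{+}v$ into $g_1+g_2$), so that Lemma \ref{lemma:6} is only ever invoked for the \emph{fixed} test functions $|y-a_i|$, $|y-a_i|^{1-\alpha}$ and $1$, with $\|v\|_\alpha$ factored out in front; uniformity over the unit ball of $v$ then comes for free. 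Your version instead applies the lemma to the $x$-, $z$- and $v$-dependent family $\varphi(y)=v(y)(\omega(x-y)+\omega(x+y))$ and its H\"{o}lder difference quotients, which works but obliges you to check that the $o(\eta)$ estimate on $I_2(\eta,n)$ inside the lemma's proof is uniform over that family; this requires equicontinuity of the family near $a_i$ (true here, with modulus $O(\delta^{1-\alpha})$), not merely a uniform bound on $\|\varphi\|_{C([-d,d])}$ as you state. With that small precision added, your argument closes.
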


\begin{proof}
Let $\beta<\infty$, $\beta_n \to \infty$ and $\|U-U_\infty\|_{C^1([-d,d])}\to 0$ as $n\to \infty.$
We introduce the partition of $[0,d],$ $0=y_0<y_1<...<y_{N}=d$ where $y_i= (a_{i}+a_{i-1})/2,$ $i=1,\ldots,N$
and represent $S[\beta_n,U_n]$ as
$$
S[\beta_n,U_n]=\sum\limits_{i=1}^N (S_i^+(n)+S_i^-(n))
$$
with
$$
S_i^\pm(n)v= \pm \int \limits_{\pm y_i}^{ \pm y_{i+1}} \omega(x-y)f'_{\beta_n}(U_n(y))v(y)dy.
$$

For $\beta=\infty$ we have
$$
S[\infty,U_n]=\sum\limits_{i=1}^N (S_i^{+}+S_i^{-}),
\quad
S_i^\pm v= \dfrac{\omega(x\pm a_i)}{|u'_\infty(a_i)|}v(a_i).
$$

Next we prove that $\|S_i^{+}(n)-S_i^+\|_{C^{0,\alpha}([-d,d])} \to 0,$ $n\to 0,$ for any $i=1,...,N.$

For an arbitrary fixed $i \in \{1,\ldots,N\}$

$$(S_i^+(n)v)(x)-(S_i^+v)(x)=g_1(x,n)+g_2(x,n)$$
where
$$
g_1(x,n)=\int \limits_{y_i}^{ y_{i+1}} (\omega(x-y)-\omega(x-a_i))f'_{\beta_n}(U_n(y))v(y)dy
$$
and
$$
g_2(x,n)=\omega(x-a_i)\int \limits_{y_i}^{ y_{i+1}} f'_{\beta_n}(U_n(y))\Big(v(y)-\dfrac{v(a_i)}{u'_\infty(a_i)}\Big)dy.
$$

Next we show that $\|g_k(\cdot,n)\|_{\alpha}=o(1/n)\|v\|_\alpha$, $k=1,2,$ as $n\to \infty.$
We start with $g_1(x,n).$ Let $x\in[y_i,y_{i+1}]$ then
\begin{equation*}
\begin{split}
|g_1(x,n)|\leq &\int \limits_{y_i}^{ y_{i+1}} |\omega(x-y)-\omega(x-a_i)| f'_{\beta_n}(U_n(y)) |v(y)|dy \leq \\
& L \|v\|_{\alpha}\int\limits_{y_i}^{ y_{i+1}} |y-a_i| f'_{\beta_n}(U_n(y))dy
\end{split}
\end{equation*}
with $L\leq\|\omega'\|_{\infty}$ being the Lipschitz constant of $\omega(x)$ on the interval $[-d-y_{i+1},d+y_{i+1}].$

Using \eqref{eq:conv:1} with $\varphi(y)=|y-a_i|$ the integral on the right hand side tends to zero as $n\to \infty.$
In order to bound the H\"{o}lder constant of $g_1(x,n)$ we use two different estimates
$$
|\omega(x-y)-\omega(x-a_i)-\omega(z-y)+\omega(z-a_i)|\leq 2 L |y-a_i|
$$
and
$$
|\omega(x-y)-\omega(x-a_i)-\omega(z-y)+\omega(z-a_i)|\leq 2 L |x-z|
$$
so that
$$
|\omega(x-y)-\omega(x-a_i)-\omega(z-y)+\omega(z-a_i)|\leq 2L |y-a_i|^{1-\alpha}|x-z|^\alpha.
$$

Thus for $x\not=z$ we have
\begin{equation*}
\begin{split}%
\dfrac{|g_1(x,n)-g_1(y,n)|}{|x-z|^\alpha}\leq 2 L \| v\|_\alpha \int \limits_{y_i}^{ y_{i+1}} |y-a_i|^{1-\alpha} f'_{\beta_n}(U_n(y)) dy.
\end{split}
\end{equation*}
Again by \eqref{eq:conv:1} the integral on the right hand side tends to zero as $n\to 0.$

To handle the term $g_2(x,n)$ we represent it as a sum $g_2(x,n)=\omega(x-a_i)(I_1(n)-I_2(n))$
with
$$
I_1(n)=\int\limits_{y_i}^{y_{i+1}} f'_{\beta_n}(U_n(y))(v(y)-v(a_i))dy
$$
and
$$
I_2(n)=v(a_i) \left(\int\limits_{y_i}^{y_{i+1}} f'_{\beta_n} (U_n(y))dy -\dfrac{1}{|u'_\infty(a_i)|}\right).
$$

We have the estimates
$$
|I_1(n) \leq \|v\|_{\alpha} \int\limits_{y_i}^{y_{i+1}} f'_{\beta_n}(U_n(y))|y-a_i|dy
$$
and
$$
|I_2(n) \leq \|v\|_{\alpha}\left( \int\limits_{y_i}^{y_{i+1}} f'_{\beta_n}(U_n(y))dy- \dfrac{1}{|u'_\infty(a_i)|}\right),
$$
where from Lemma \ref{lemma:6} both integrals on the right hand sides tend to zero as $n\to 0.$

It follows that for any $x\in(y_i,y_{i+1}),$
$$
|g_2(x,n)|\leq \|\omega\|_\infty (|I_1(n)|+I_2(n))=o(1/n) \|v\|_\alpha, \mbox{ as } n\to 0,
$$
and
\begin{equation*}
\begin{split}
\dfrac{|g_2(x,n)-g_2(z,n)|}{|x-z|^\alpha}&\leq
\sup\limits_{
\begin{array}{c}
x,z\in[y_i,y_{i+1}]\\
x\not=z
\end{array}
}
\dfrac{|\omega(x-a_i)-\omega(z-a_i)|}{|x-z|^\alpha}
 \| v\|_\alpha (|I_1(n)|+I_2(n))\leq \\
 &L \sup \limits_{x,z} |x-z|^{1-\alpha} (|I_1(n)|+|I_2(n)|)<(d)^{1-\alpha} L \|v\|_\alpha o(1/n), n\to \infty.
\end{split}
\end{equation*}
Collecting the estimates for $\|g_1(x,n)\|_\alpha$ and $\|g_2(x,n)\|_\alpha$ we conclude that
$\|S^+_i(n)-S^+_i\|_\alpha=o(1/n),$ $n\to \infty.$
Due to the symmetry $\|S^-_i(n)-S^-_i\|_\alpha=o(1/n)$ as well.
\end{proof}

\begin{remark}\label{rem:H-space vs Lip}
Notice that the need for $C^{0,\alpha}([-d,d])$ space with $0<\alpha<1$ comes when estimating the convergence of $|g_i(x,n)-g_i(z,n)|/|x-z|^\alpha,$  $n\to 0,$ $i=1,2,$ as similar arguments would fail for $\alpha=1.$
\end{remark}

With the next proposition we prove that the condition (iv) of Theorem \ref{th:IFT} is satisfied.
\begin{proposition}
Let $u_\infty$ be a symmetric bump solution to \eqref{eq:H}, $U_\infty=T_1u_\infty$ be its restriction on $[-d,d],$ and assume that  Assumptions \ref{ass:omega} and Assumption \ref{ass:u:infty} are satisfied. Then
the Fr\'{e}chet derivative of $P(\infty,U)$ at $U_\infty,$  $P'[\infty,U_\infty]=I-S[\infty,U_\infty]:C^1_{even}([-d,d]) \to C^{0,\alpha}([-d,d])$ with $S[\infty,U_\infty]$ defined in \eqref{eq:S:inf} is invertible.
\end{proposition}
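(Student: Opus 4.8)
The plan is to prove the invertibility by writing the inverse of $I-S[\infty,U_\infty]$ down explicitly; as a by-product this produces the operator that drives the iteration \eqref{eq:u-appx}. Write ${\bf s}(x)$ for the vector of Theorem \ref{th:1}, with components $\psi_i(x):=(\omega(x-a_i)+\omega(x+a_i))/|u'_\infty(a_i)|$, so that \eqref{eq:S:inf:2} reads $S[\infty,U_\infty]v={\bf s}(x)^{T}{\bf v}$ with ${\bf v}:=(v(a_1),\dots,v(a_N))^{T}$. First I would record that $S[\infty,U_\infty]$ factors through the bounded point-evaluation $v\mapsto{\bf v}$, hence is bounded of rank at most $N$, and that by Assumption \ref{ass:omega}(2) each $\psi_i\in C^{0,1}([-d,d])\subset C^{0,\alpha}([-d,d])$; so $I-S[\infty,U_\infty]$ is bounded into $C^{0,\alpha}_{even}([-d,d])$. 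It then suffices to solve $(I-S[\infty,U_\infty])v=g$ uniquely for each $g$ and to check boundedness of $g\mapsto v$.

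The main step is the reduction to a finite linear system. Any $v$ with $(I-S[\infty,U_\infty])v=g$ satisfies $v=g+{\bf s}(x)^{T}{\bf v}$; evaluating at $x=a_j$ and using $(S[\infty,U_\infty]v)(a_j)=(\textsf{S}{\bf v})_j$ (since $\psi_i(a_j)=S_{ji}$, with $\textsf{S}$ the matrix of Theorem \ref{th:1}) turns this into $(\textsf{I}-\textsf{S}){\bf v}={\bf g}$, ${\bf g}:=(g(a_1),\dots,g(a_N))^{T}$. So everything reduces to $\det(\textsf{I}-\textsf{S})\neq0$, which I would extract from Assumption \ref{ass:u:infty}(2). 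Comparing the entries of $\textsf{S}$ with the formulas there for $\partial g_i/\partial a_j({\bf a})$, and using $\partial g_i/\partial a_i({\bf a})=|u'_\infty(a_i)|-\omega(0)-\omega(2a_i)$ together with $(-1)^{2i}=1$ on the diagonal, one obtains
\[
(\textsf{I}-\textsf{S})_{ij}=(-1)^{i+j}\,\frac{\bigl(J({\bf a})\bigr)_{ij}}{|u'_\infty(a_j)|},\qquad\text{i.e.}\qquad\textsf{I}-\textsf{S}=\textsf{E}\,J({\bf a})\,\textsf{E}\,\textsf{D}^{-1},
\]
with $\textsf{E}=\diag((-1)^1,\dots,(-1)^N)$ and $\textsf{D}=\diag(|u'_\infty(a_1)|,\dots,|u'_\infty(a_N)|)$. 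Since $\det\textsf{E}=\pm1$ and, by the regularity in Assumption \ref{ass:u:infty}(1), $\det\textsf{D}=\prod_{i}|u'_\infty(a_i)|>0$, this gives $\det(\textsf{I}-\textsf{S})=\det J({\bf a})/\prod_{i}|u'_\infty(a_i)|$, which is nonzero by Assumption \ref{ass:u:infty}(2); in particular $\textsf{S}-\textsf{I}$ is invertible.

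Finally I would assemble the inverse. Set $\Gamma g:=g-{\bf s}(x)^{T}(\textsf{S}-\textsf{I})^{-1}{\bf g}$. Substituting back and using $(\textsf{I}-\textsf{S})(\textsf{S}-\textsf{I})^{-1}=-\textsf{I}$ shows $(I-S[\infty,U_\infty])\Gamma g=g$, and applying the same reduction to $g=(I-S[\infty,U_\infty])v$ shows $\Gamma(I-S[\infty,U_\infty])v=v$; thus $\Gamma$ is a two-sided inverse, it preserves evenness, and it is bounded as a composition of the bounded maps $g\mapsto{\bf g}$, ${\bf g}\mapsto(\textsf{S}-\textsf{I})^{-1}{\bf g}$, ${\bf c}\mapsto{\bf s}(\cdot)^{T}{\bf c}$ and the identity. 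This is exactly the operator $\Gamma=(P'[\infty,U_\infty])^{-1}$ that enters \eqref{eq:u-appx:b}. Injectivity of $P'[\infty,U_\infty]$ is simply the case $g=0$ of the reduction ($(\textsf{S}-\textsf{I}){\bf v}=0$ forces ${\bf v}=0$, hence $v=S[\infty,U_\infty]v=0$). The only genuinely fiddly point I foresee is the sign-and-diagonal bookkeeping in $\textsf{I}-\textsf{S}=\textsf{E}J({\bf a})\textsf{E}\textsf{D}^{-1}$: matching the exponent $(-1)^{i+j+1}$ occurring in $\partial g_i/\partial a_j$ with the entries of $\textsf{S}$, and treating the diagonal correctly.
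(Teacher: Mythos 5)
Your proof is correct in its essentials and rests on the same key computation as the paper's: identifying $\textsf{I}-\textsf{S}$ with the Jacobian $J(\mathbf a)$ up to conjugation by the sign matrix and multiplication by the diagonal matrix of $|u'_\infty(a_i)|$, so that $\det(\textsf{I}-\textsf{S})=\det J(\mathbf a)/\prod_i|u'_\infty(a_i)|\neq 0$ by Assumption \ref{ass:u:infty}; your sign-and-diagonal bookkeeping matches the paper's (the paper writes $\textsf{P}\sim\textsf{PC}=\textsf{DJD}\sim\textsf{J}$, which is your factorization in a slightly different guise). Where you differ is in how the finite-dimensional fact is lifted to the operator: the paper argues via spectra (the rank-$N$ operator $S[\infty,U_\infty]$ has the same nonzero eigenvalues as $\textsf{S}$, it symmetrizes $\textsf{S}$ by $\textsf{C}^{1/2}\cdot\textsf{C}^{-1/2}$ to get reality of the spectrum, and concludes $P'[\infty,U_\infty]$ has no zero eigenvalue), implicitly invoking the Fredholm alternative for a finite-rank perturbation of the identity; you instead construct the inverse $\Gamma g=g+{\bf s}^T(\textsf{I}-\textsf{S})^{-1}{\bf g}$ explicitly and verify it is two-sided and bounded. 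Your route is more self-contained and in fact anticipates formula \eqref{eq:P'inv}, which the paper only derives later in Step 3 of the proof of Theorem \ref{th:1}. One caveat, which your explicitness exposes but which is inherited from the statement itself: since the components of ${\bf s}$ are only Lipschitz, $\Gamma g$ lies in $C^{0,\alpha}_{even}([-d,d])$ but not in general in $C^1_{even}([-d,d])$, so $I-S[\infty,U_\infty]$ is injective with bounded left inverse but is not onto all of $C^{0,\alpha}_{even}([-d,d])$; your $\Gamma$ is a genuine two-sided inverse only on the range of $I-S[\infty,U_\infty]$ (equivalently, as an operator on the larger space $C^{0,\alpha}_{even}$). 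The paper's eigenvalue argument silently skips this point as well, so it is not a defect of your approach relative to the paper's, but you should phrase the conclusion as invertibility onto the range rather than onto all of $\cW$.
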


\begin{proof}
We will show that Assumption \ref{ass:u:infty}(2) implies $P'[\infty,U_\infty]$ has no zero eigenvalue.
Fist we notice that the operator $S[\infty, U_\infty]$ has the same eigenvalues as the $N\times N$ matrix ${\textsf S}=(S_{ij})$
\begin{equation*}
S_{ij}=\frac{\omega(a_i-a_j)+\omega(a_i+a_j)}{|u'_\infty(a_j)|}, \quad i,j=1,\ldots,N.
\end{equation*}

Introduce the real diagonal matrix ${\textsf C}:=diag(|u'_\infty(a_1)|,\ldots,|u'_\infty(a_N)|).$ The
 matrix ${\textsf S}$ can be made symmetric as ${\textsf C}^{1/2}{\textsf S} {\textsf C}^{-1/2}$ and thus has only real eigenvalues. The operator $P'[\infty,U_\infty]$ has in turn the same eigenvalues as the matrix ${\textsf P}:={\textsf I}-{\textsf S}$ with ${\textsf I}$ being the identity matrix. We notice that the elements of ${\textsf P}=(P_{ij}),$ $i,j=1,\ldots,N$ are
 \begin{equation}
 \begin{array}{l}
  P_{ii}=\dfrac{|u'_\infty(a_j)|-\omega(0)-\omega(2a_i)}{|u'_\infty(a_i)|}=
  \dfrac{1}{|u'(a_i)|}\dfrac{\partial g_i}{\partial a_i}({\bf a}), \\
  \\
  P_{ij}=-\dfrac{\omega(a_i-a_j)+\omega(a_i+a_j)}{|u'_\infty(a_j)|}= \dfrac{(-1)^{i+j}}{|u'(a_i)|}\dfrac{\partial g_i}{\partial a_j}({\bf a}), \quad i\not=j,
 \end{array}
  \end{equation}
with $\partial g_i/\partial a_j({\bf a})$ defined in Assumption \ref{ass:u:infty}(2).

 Let $\textsf{J}$ be the Jacobian matrix defined in Assumption \ref{ass:u:infty}(2), $\textsf{D}:=diag((-1)^1,\ldots,(-1)^N),$ and $\textsf{C}$ given as above. Then we have $\textsf{P} \sim \textsf{PC} = \textsf{DJD} \sim \textsf{J}.$ As $\det(\textsf{J})\not=0$ we conclude that the matrix $\textsf{P}$ has no zero eigenvalue as well as the operator $P'[\infty,U_\infty].$
\end{proof}
Thus, we are ready to apply Theorem \ref{th:IFT} for the operator $P$ in \eqref{eq:op:P}.

\subsection*{Step 2}
  Theorem \ref{th:IFT} (a)-(c)  yields  the existence and uniqueness of the fixed point $U_\beta \in B_\epsilon (U_\infty)$ of the operator $H_\beta$ for large $\beta>0$ and the convergence $\|U_\beta-U_\infty\|_{C^1([-d,d])}\to 0$ as $\beta\to \infty$.

 Next, we apply the reconstruction operator $T_2$ given in \eqref{eq:T2} to $U_\beta,$ that is, $u_\beta=T_2U_\beta.$ From Proposition \ref{prop:1} $u_\beta$ is then a N-bump solution to \eqref{eq:H}. Let $\beta \to \infty,$ from the estimate
 $$ \|u_\beta-u_\infty\|_{C^1} \leq \left(\|\omega\|_\infty +\|\omega'\|_\infty\right) \|N(\beta,U_\beta)-N(\infty,U_\infty)\|_{L^1([-d,d])},$$
Lemma \ref{lemma:4}, and $\|U_\beta-U_\infty\|_{C^1([-d,d])}\to 0$ the convergence $\|u_\beta-u_\infty\|_{C^1} \to 0$ follows.

\subsection*{Step 3}

From the second part of Theorem \ref{th:IFT} the fixed point $U_\beta\in B_\epsilon(U_\infty)$ of $H_\beta$ for sufficiently large $\beta$ can be obtain by the sequence of successive approximations $\{U_n\}$, $\|U_n-U_\beta\|_{C^1([-d,d])} \to 0,$ $n\to \infty,$ as
$$
U_{n+1}=U_n-(P'_U[\infty,U_\infty])^{-1}\circ P(\beta,U_n), \quad U_0=U_\infty,
$$

or, using \eqref{eq:op:P} and \eqref{eq:S:inf:2}
\begin{equation}\label{eq:approx:U_n}
U_{n+1}=U_n-(I-S[\infty,U_\infty])^{-1}\circ (I-H_\beta(U_n)), \quad U_0=U_\infty.
\end{equation}

First we obtain the expression for $(I-S[\infty,U_\infty])^{-1}.$
Let $(I-S[\infty,U_\infty]) v=w$ and introduce ${\bf v}=(v(a_1),\ldots,v(a_N))^T$ and ${\bf w}=(w(a_1),\ldots,w(a_N))^T.$ Then ${\bf v}=(\textsf{I}-\textsf{S})^{-1} {\bf w}$ and
$v(x)=w(x)+S[\infty,U_\infty]v(x)=w(x)+{\bf s}^T \bf{ v}
$
where $\textsf{I},$ ${\bf s}$ and $\textsf{S}$ are as in Theorem \ref{th:1}.

From the last two formulae we derive
\begin{equation} \label{eq:P'inv}
v(x)=w(x)+ {\bf s}^T(\textsf{I}-\textsf{S})^{-1} {\bf w}.
\end{equation}

Combining \ref{eq:P'inv} and  \eqref{eq:approx:U_n} we arrive at \eqref{eq:u-appx}.

On each iteration step we can obtain the corresponding $u_n=T_2U_n$ with $T_2$ given in \eqref{eq:T2}.
It remains to notice that
$$\|u_n-u_\beta\|_{C^1}=\|T_2 U_n-T_2U_\beta\|_{C^1}\leq (\|\omega\|_\infty+\|\omega'\|_\infty)\|N(\beta,U_n)-N(\beta,U_\beta)\|_{L^1([-d,d])}\to 0 $$
as $n\to \infty$ due to Lemma \ref{lemma:3}.

\section{Advantages for numerical construction} \label{sec:Conclusions}

In this section we apply Theorem \ref{th:1} to demonstrate the existence of $1$-bump solutions of the FitzHugh-Nagumo equation \eqref{eq:FitzHugh:2} and $2$-bump solutions to the Amari model \eqref{eq:1} with $\omega$ given in \eqref{eq:omega:2Gaussians} and $f_\beta$  as in \eqref{eq:f:hill}. We also compute the approximations of the bump solutions using \eqref{eq:u-appx} and discuss the advantages of this numerical approximation compared to other approaches.

\subsection{$1$-bump solution of FitzHugh Nagumo equation}\label{sec:Conclusions:FN}

Let us investigate the equation \eqref{eq:FN:steady:a} with $f_\beta$ given by \eqref{eq:f:hill} with $p=2$ and its corresponding integral equation \eqref{eq:FN:steady:int}, for the existence and numerical construction of bump solutions.
A bump solution corresponds to the homoclinic orbit in the phase plane of the equation
\begin{equation} \label{eq:odes}
\begin{array}{l}
u'=v\\
v'=k^2u-f_\beta(u)
\end{array}
\end{equation}
which exists when $k<1/\sqrt{2h}.$
Any bounded solution to \eqref{eq:odes} is confined in the closure of the bounded open region $D$ of the phase plane with the homoclinic orbit being its boundary $\partial D$, see e.g. Fig.\ref{fig:ode:FN}a.

\begin{figure}[h]
\scalebox{0.45}{\includegraphics{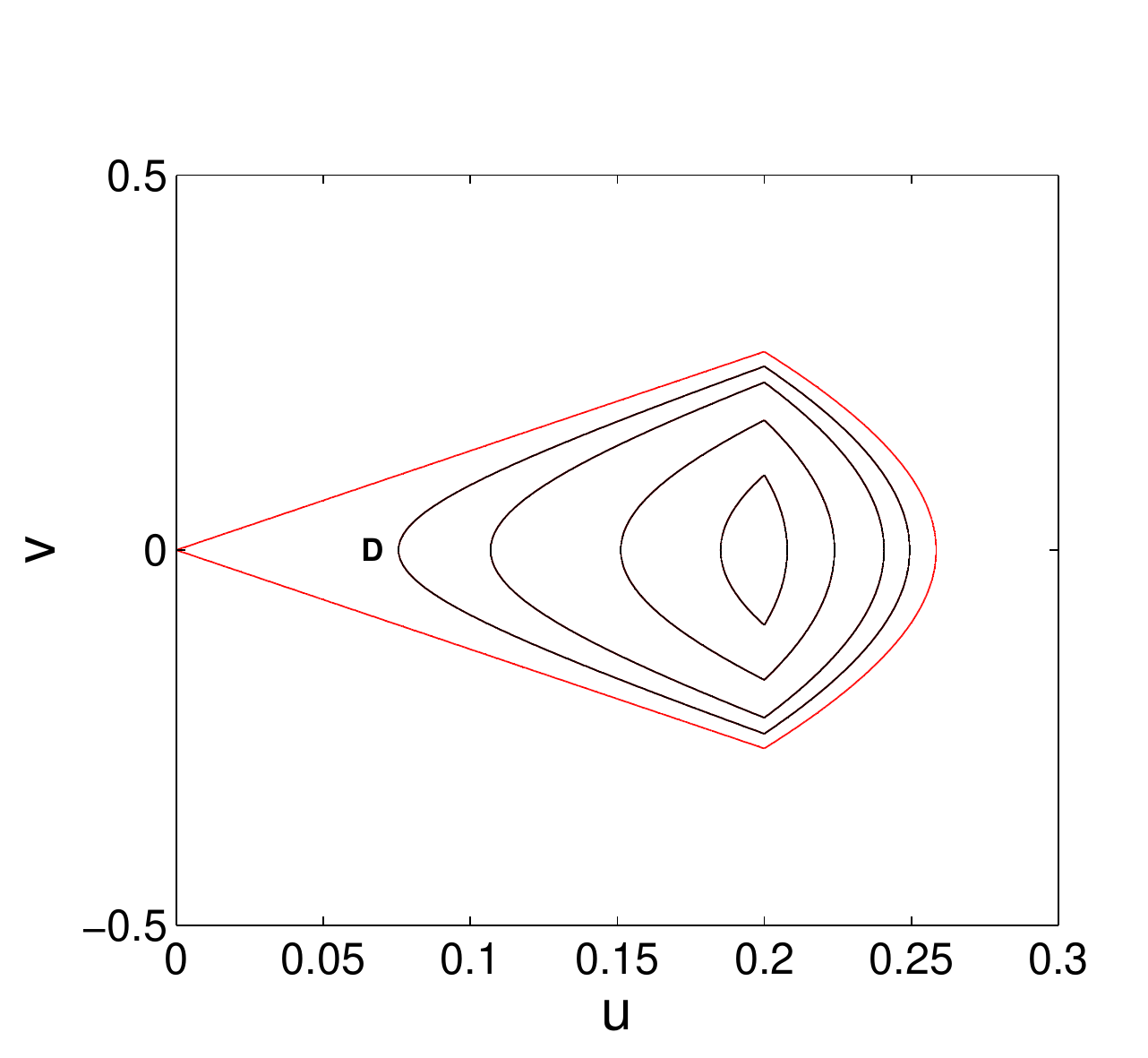}}
\scalebox{0.45}{\includegraphics{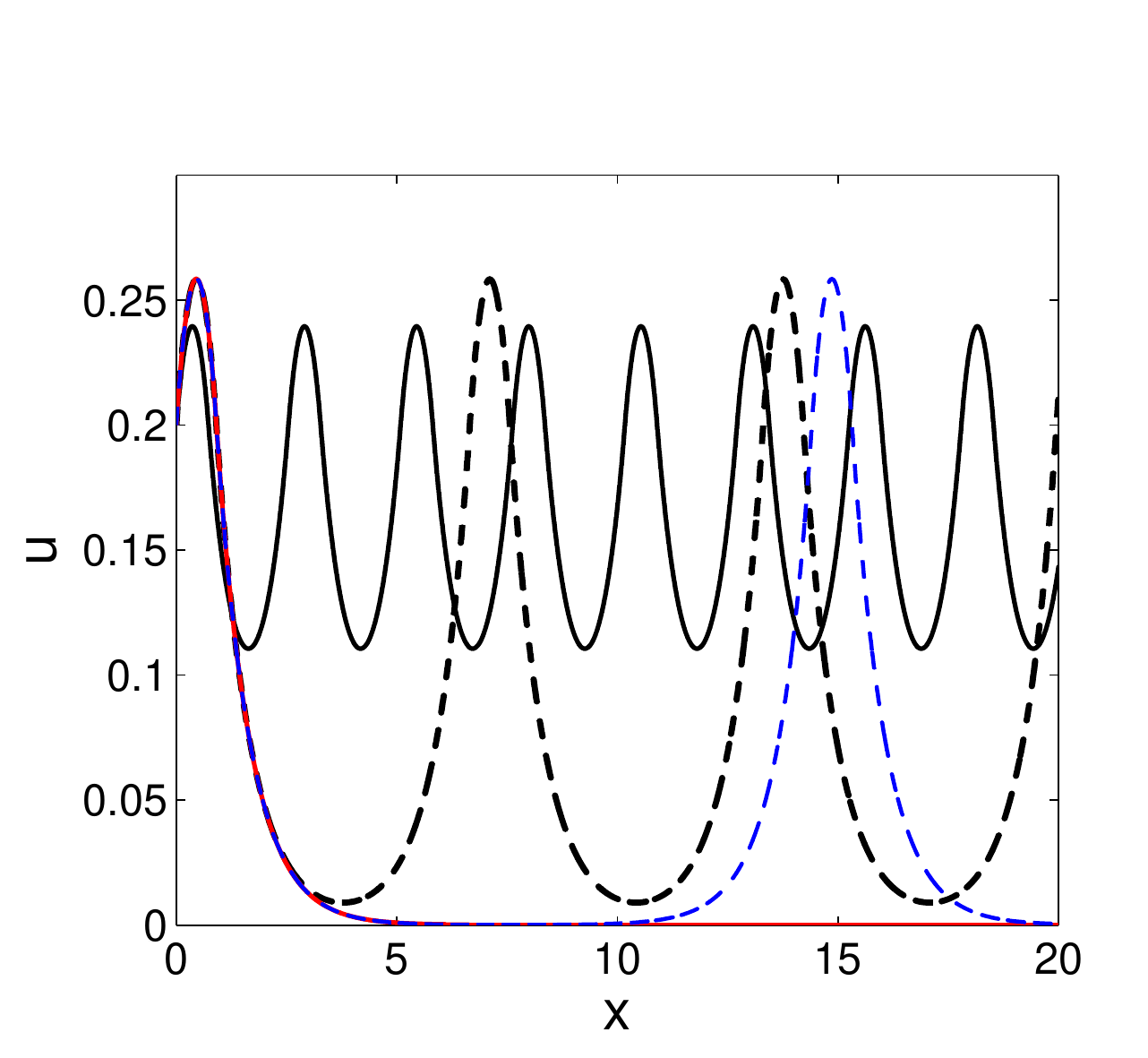}}
\caption{For $\beta=\infty,$ $k=1.339,$ and $h=0.2$ we plot (a) periodic orbits (black) and the homoclinic orbit (red) of \eqref{eq:odes} and (b) periodic solutions (black and blue) and the bump solution (red) to \eqref{eq:FN:steady:a} } \label{fig:ode:FN}
\end{figure}
Notice that the system \eqref{eq:odes} is reversible and conservative.
From the reversibility it follows that any solution of \eqref{eq:odes} with the initial conditions in $D$ is a periodic orbit and thus periodic orbits are dense in $D,$ see Fig. \ref{fig:ode:FN}a. This fact causes some difficulties to obtain the homoclinic orbit numerically. We have plotted in Fig.\ref{fig:ode:FN}b the bump solution obtained analytically (red line) and by solving \eqref{eq:odes} numerically with $(u(0),v(0))=(h,kh)$ (blue dashed line). This illustrates that in order to obtain a good approximation of the bump solution on a large interval using the shooting method one must increase the precision of the method accordingly. Moreover, the shooting method would not be straightforwardly applicable if $f_\beta$ does not satisfy Assumption \ref{ass:f}(2). This supports our argument for replacing the cubic $g(u)$ in \eqref{eq:FitzHugh} with $-u+f_\beta,$ see Section \ref{sec:Examples:FN}.

Now we analyse the equation \eqref{eq:FN:steady:int} where Assumption \ref{ass:f} and Assumption \ref{ass:omega} are satisfied. For $\beta=\infty$ one can obtain an explicit formula for the $(h;a)$-bump solution $u_\infty$ to \eqref{eq:FN:steady:int} using the Amari technique \cite{Amari:1977} or by solving the ordinary differential equation \eqref{eq:FN:steady:a}. We calculate $a=-\ln(1-2k^2h)/2k$ and $u'_\infty(a)=kh>0,$ that is the bump is regular and Assumption \ref{ass:u:infty}(1) is satisfied. The condition (2) of Assumption \ref{ass:u:infty} is reduced to $\omega(0)\not=0$ which is also fulfilled. Thus, by Theorem \ref{th:1} there exists a regular $1$-bump solution to \eqref{eq:FN:steady:int} that converges in $C^1(\R)$-norm to $u_\infty$ and can be constructed using the iteration scheme \eqref{eq:u-appx}. In Fig.\ref{fig:Iterations:FN}a we have plotted the approximation of the restriction of bump solution, that is, $U_\beta.$ In Fig. \ref{fig:Iterations:FN}b we have plotted the base $10$ logarithm of the relative error

\begin{equation}\label{eq:error:FN}
error(n+1)=\dfrac{\|U_{n+1}-U_n\|_{C^1([-d,d])}}{\|U_n\|_{C^1([-d,d])}}, \quad n=0,1,\ldots .
\end{equation}
where $U_n$ is given in \eqref{eq:u-appx:b}.

\begin{figure}[h]
\scalebox{0.45}{\includegraphics{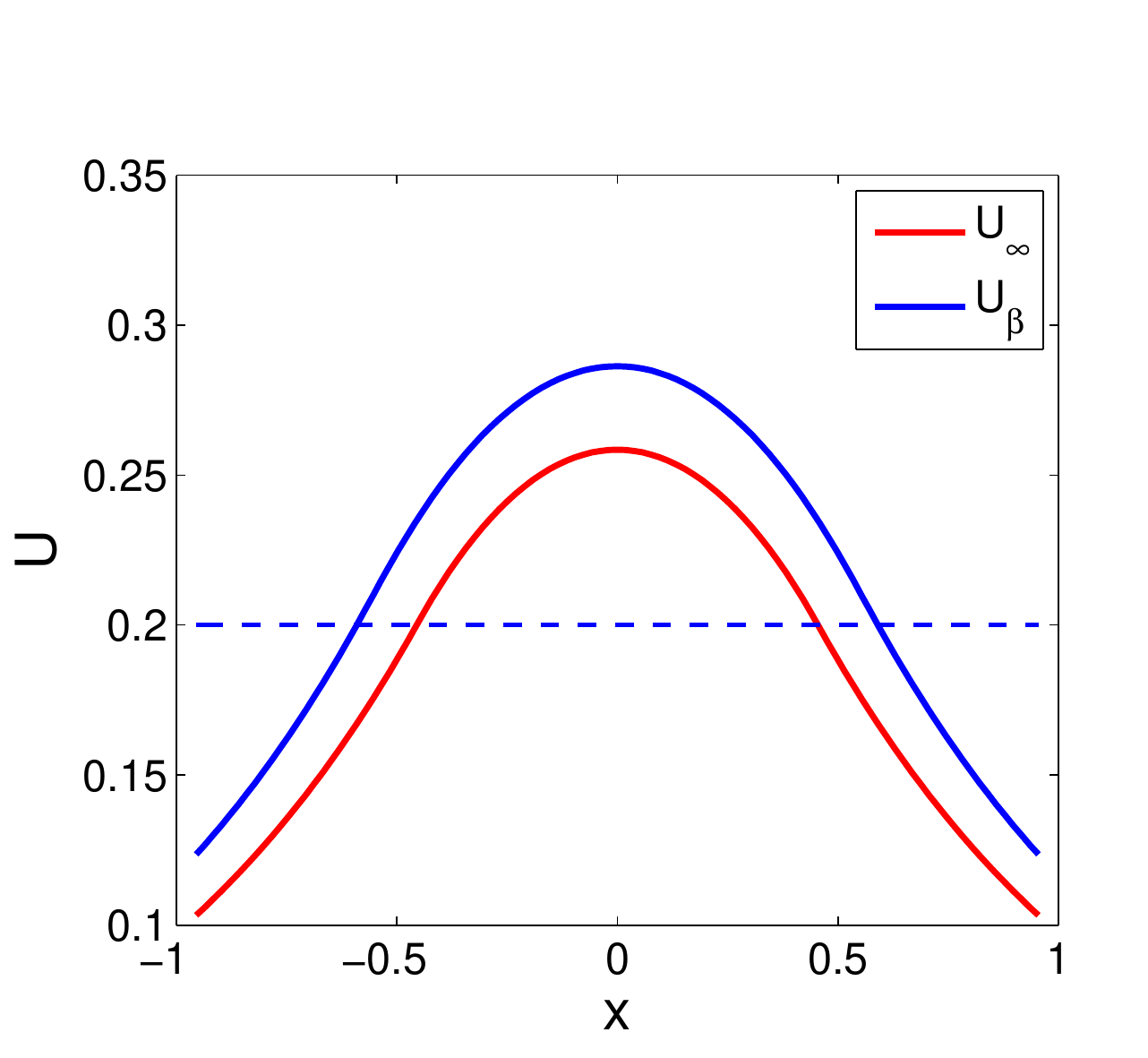}}
\scalebox{0.45}{\includegraphics{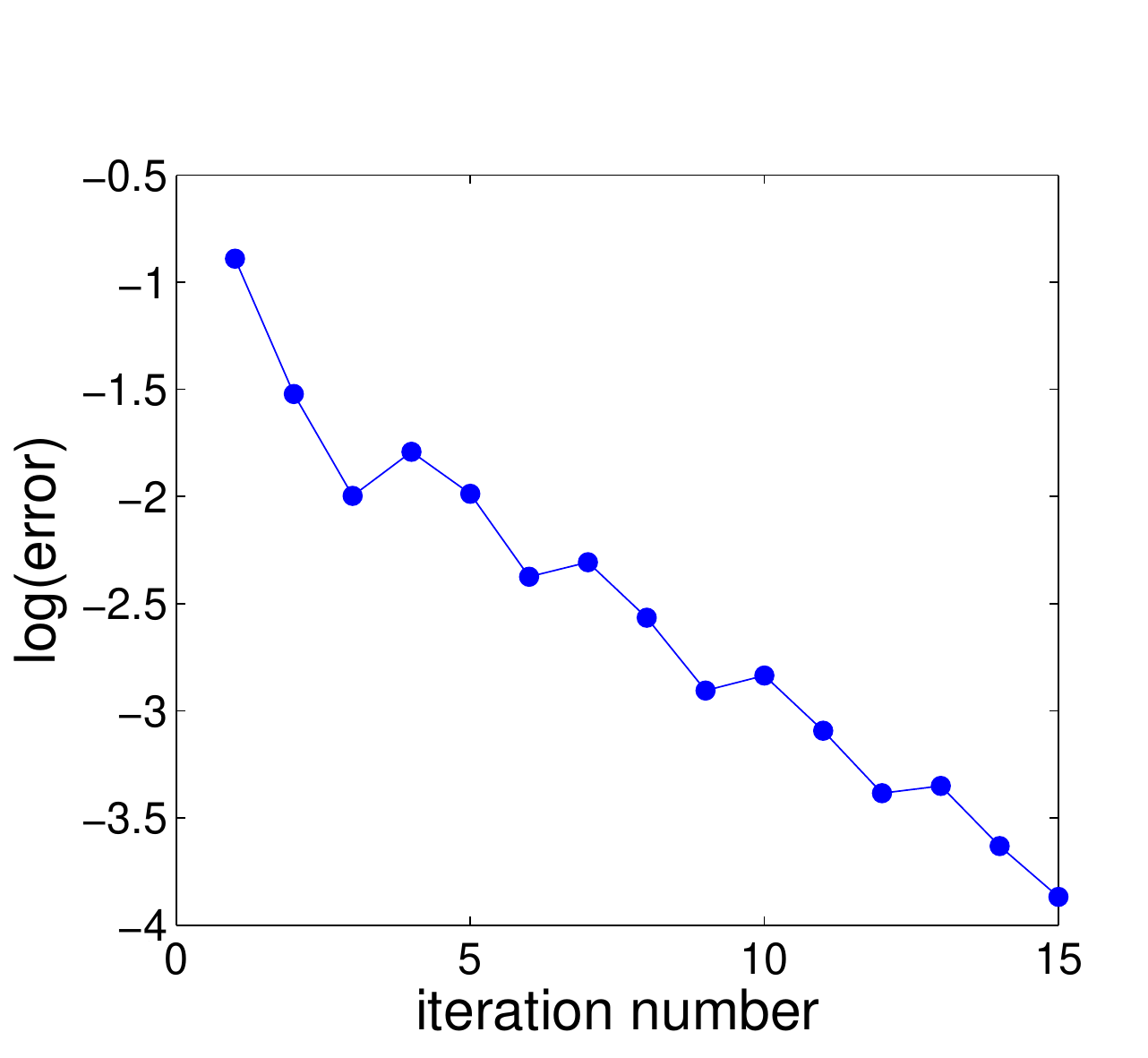}}
\caption{(a) Approximation of the bump solution to \eqref{eq:FN:steady:int} on $[-d,d]$ with $d=a+0.5$ and $\beta=100$ after the 15th iteration. (b) The logarithm of the relative error \eqref{eq:error:FN} of the iteration process. } \label{fig:Iterations:FN}
\end{figure}

Notice, that despite the bump solution $u_\infty$ is non-isolated fixed point of \eqref{eq:H} with $\beta=\infty$ on $C^1(\R)$, it is isolated in $K_\epsilon(u_\infty).$ Consequently, $U_\beta=T_1 u_\beta$ is an isolated fixed point of the operator $H_\beta$ already in the whole space $C^1([-d,d]).$  Thus, the proposed method allows us to isolate solutions which can be extremely useful when dealing with higher order ordinary differential equations.

\subsection{2-bump solutions of the neural field model} \label{sec:Conclusions:NFM}

In this section we illustrate the obtained result for the Amari model with the firing rate function $f_\beta,$ $\beta \in (0,\infty]$ as in \eqref{eq:f:hill}, $p=2$ and the connectivity function $\omega$ given as in \eqref{eq:omega:2Gaussians} with $K=3,$ $k=2,$ $M=1,$ and $m=0.5,$ see Fig.\ref{fig:omegas}. Hence, Assumption \ref{ass:f} and Assumption \ref{ass:omega} are satisfied. When $\beta=\infty$ one can employ the Amari technique \cite{Amari:1977,Murdock:2006} to show that there exist $2$-bump solutions. In particular, for $h=0.3$ there is a pair of $2$-bump solutions $u^{(1)}_\infty$ and $u^{(2)}_\infty$: the first one is the $(h; -a_2,-a_1,a_1,a_2)$-bump with $a_1=0.2948,$  $a_2=0.8506$ and the second one is with $a_1=0.3786,$ $a_2=0.6782.$ We verified numerically that Assumption \ref{ass:u:infty} is fulfilled for both bump solutions, thus Theorem \ref{th:1} can be applied.

In Fig.\ref{fig:Iterations:NFM}a we have plotted  the restriction of  $u_\infty^{(i)}$, $i=1,2,$ on $[-1,1]$
and the restriction of the approximation of the bump solutions of \eqref{eq:H} with $\beta=100.$

For both cases we have computed the relative error using \eqref{eq:error:FN} with $U_n=U_n^{(i)}$ and have plotted the base $10$ logarithm of the error in Fig. \ref{fig:Iterations:NFM}b.

\begin{figure}[h]
\scalebox{0.45}{\includegraphics{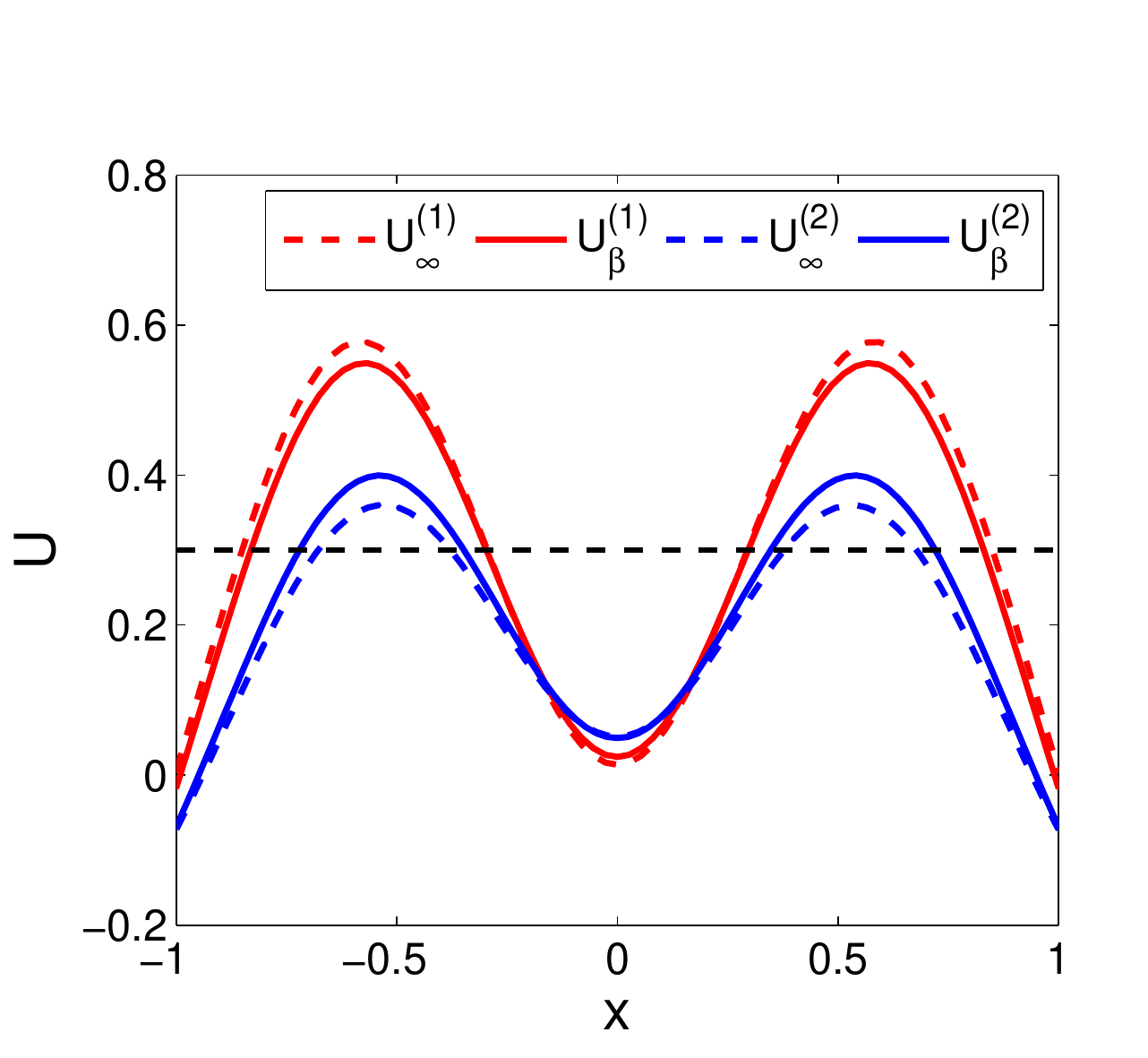}}
\scalebox{0.45}{\includegraphics{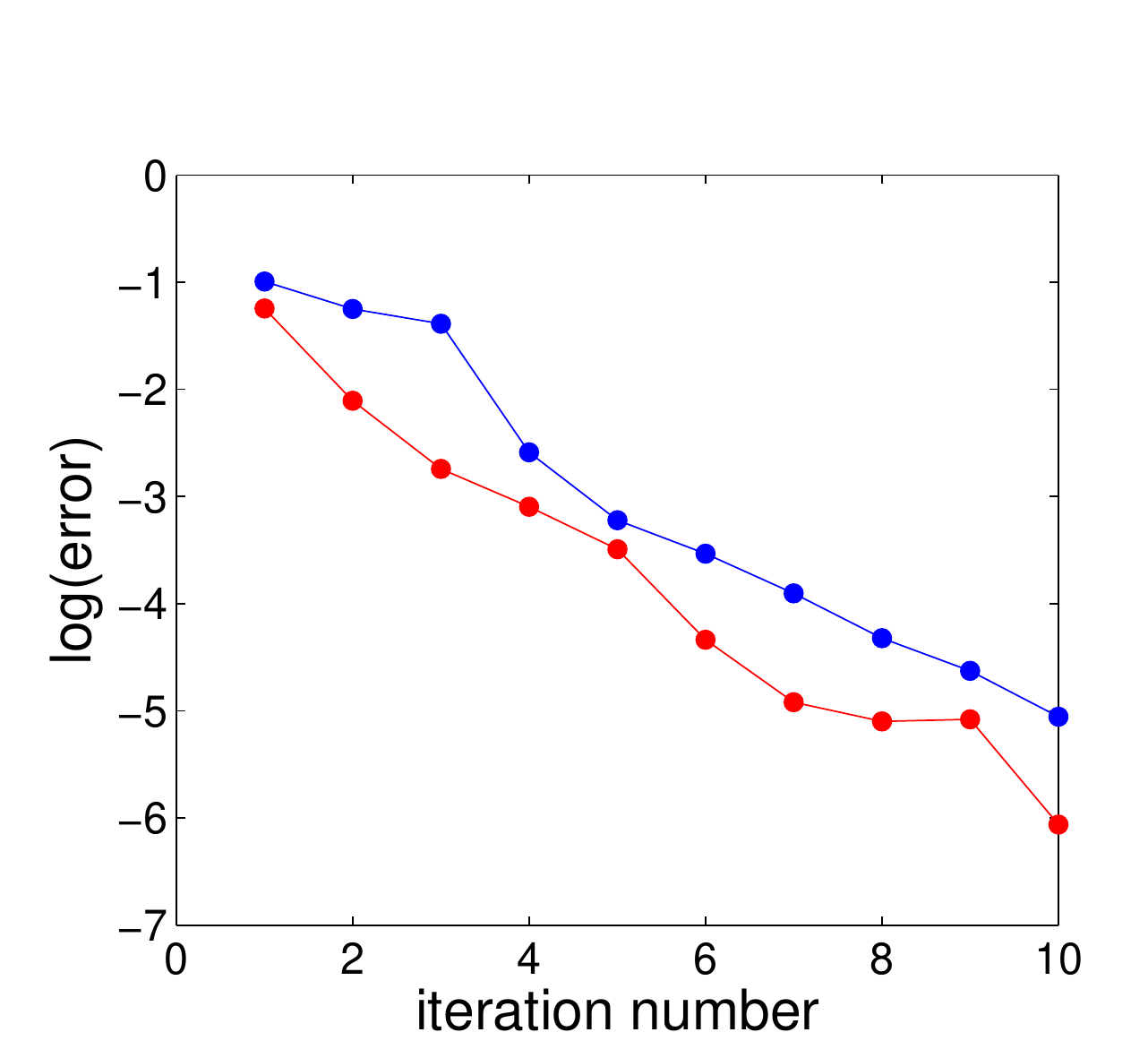}}
\caption{(a) Approximation of the $2$-bump solutions to \eqref{eq:FN:steady:int} on $[-d,d]$ with $d=a_2+0.5$ and $\beta=100$ after the $10$th iteration. (b) The logarithm of the relative error \eqref{eq:error:FN} of the iteration process. } \label{fig:Iterations:NFM}
\end{figure}

 As the Fourier transform of $\omega(x)$ is not a real rational function, the ordinary differential methods cannot be used here. Yet, when $f_\beta$ is such that $\chi_{[h+1/\beta,\infty)}\leq f_\beta<\chi_{[h,\infty)}$ it might be possible to iteratively construct $u_\beta$ using the theory of monotone operators in Banach spaces similarly as it has been done for $1$-bump solutions in \cite{Oleynik:2015}. However,this method can be tricky (or not even possible) to use for $N$-bumps solutions when $N$ is large. Moreover, some of the bump solutions cannot be obtained by this method as e.g. already for the case of $1$-bumps, the limiting solution $u_\infty$ is required to be linearly stable. As one of the $2$-bump solutions is unstable, see \cite{Laing:2002}, it is doubtful the iterative method as in \cite{Oleynik:2015} can be successful.

\section{Conclusions and Outlook}\label{sec:Outlook}
To summarize, we would like to emphasize few important points: (i)
With Theorem \ref{th:1} we justify the approximation of a smooth sigmoid function by the discontinuous unit step function for \eqref{eq:H} on the class of bump solutions. In particular, one can be assured that a bump solution $u_\beta$ for large $\beta$  exists and can be approximated by $u_\infty.$
(ii) Theorem \ref{th:1} does not require $\omega$ to be smooth nor to have a real rational Fourier transform. (iii)
In order to obtain a better approximation of $u_\beta$ than $u_\infty$ one may utilize the iteration scheme in Theorem \ref{th:1}.
Compared to the ordinary differential methods it allows us to isolate the solution and thus does not require a high numerical precision to secure that the found solution is indeed localized. However it might not be as efficient as the shooting method.

The technique presented in this paper could be fruitful in more general situations, for instance, to study existence and uniqueness of spatially localized solutions in two and three dimensional neural field models. However, the most serious obstacle to develop such a theory is the absence of a general scheme for studying bumps in the limit (discontinuous) case, the only exception being the theory of radially symmetric bumps \cite{Owen:2007,Burlakov:2016}.

Within the framework of the neural field model \eqref{eq:1}, the next step would be to study Lyapunov stability of found bumps (the work under preparation). This can be done using the properties of spectral asymptotic that follows from the norm convergence of the Fr\'{e}chet derivatives, see Proposition \ref{prop:conv}.


\bibliographystyle{unsrt}
\bibliography{ExistenceIFT_OPKS.bib}

\end{document}